\documentclass{article}
\title{The fundamental group as a topological group}
\author{Jeremy Brazas}
\usepackage{stmaryrd}
\usepackage{graphicx}
\usepackage{amssymb,mathrsfs,amsmath,amscd,fullpage,amsthm}
\usepackage{float}
\usepackage[all,cmtip]{xy}
\DeclareMathAlphabet{\mathpzc}{OT1}{pzc}{m}{it}
\usepackage{amsfonts,txfonts,pxfonts,latexsym,wasysym}
\newcommand{\sus}{\Sigma(X_{+})}
\newcommand{\ra}{\rightarrow}

\newcommand{\piz}{\pi_{0}(X)}

\newcommand{\pitsus}{\pi_{1}^{\tau}(\sus)}
\newcommand{\pitx}{\pi_{1}^{\tau}(X)}
\newcommand{\pitxxo}{\pi_{1}^{\tau}(X,x_0)}
\newcommand{\pity}{\pi_{1}^{\tau}(Y)}

\newcommand{\piztop}{\pi_{0}^{qtop}(X)}

\newcommand{\pitopxxo}{\pi_{1}^{qtop}(X,x_0)}
\newcommand{\pitopx}{\pi_{1}^{qtop}(X)}
\newcommand{\pitopy}{\pi_{1}^{qtop}(Y)}

\newcommand{\ox}{\Omega(X)}

\newcommand{\oxxo}{\Omega(X,x_0)}

\newcommand{\px}{P(X)}

\newcommand{\pitop}{\pi_{1}^{qtop}(\sus)}

\newcommand{\rat}{\mathbb{Q}}

\newcommand{\fmx}{F_{M}(X)}
\newcommand{\fmy}{F_{M}(Y)}
\newcommand{\qtg}{\mathbf{qTopGrp}}

\newcommand{\tg}{\mathbf{TopGrp}}

\newcommand{\gwt}{\mathbf{GrpwTop}}
\newcommand{\spaces}{\mathbf{Top}}
\newcommand{\grp}{\mathbf{Grp}}

\newcommand{\bspaces}{\mathbf{Top_{\ast}}}
\newcommand{\hbspaces}{\mathbf{hTop_{\ast}}}

\newcommand{\pit}{\pi_{1}^{\tau}}
\newtheorem{theorem}{Theorem}[section]
\newtheorem{lemma}[theorem]{Lemma}

\newtheorem{proposition}[theorem]{Proposition}
\newtheorem{corollary}[theorem]{Corollary}
\newtheorem{definition}[theorem]{Definition}

\newtheorem{example}[theorem]{Example}
\newtheorem{question}[theorem]{Question}

\newtheorem{remark}[theorem]{Remark}

\newtheorem{statement}[theorem]{Statement}
\newtheorem{vankampenthm}[theorem]{The van Kampen Theorem}
\newtheorem{construction}[theorem]{Construction of $\pit$}

\newtheorem{Topologicalshapegroups}[theorem]{The shape group as a topological group}

\newtheorem{approximation}[theorem]{Approximation of $\tau(G)$}

\begin{document}
\maketitle
\begin{abstract}
This paper is devoted to the study of a natural group topology on the fundamental group which remembers local properties of spaces forgotten by covering space theory and weak homotopy type. It is known that viewing the fundamental group as the quotient of the loop space often fails to result in a topological group; we use free topological groups to construct a topology which promotes the fundamental group of any space to topological group structure. The resulting invariant, denoted $\pi_{1}^{\tau}$, takes values in the category of topological groups, can distinguish spaces with isomorphic fundamental groups, and agrees with the quotient fundamental group precisely when the quotient topology yields a topological group. Most importantly, this choice of topology allows us to naturally realize free topological groups and pushouts of topological groups as fundamental groups via topological analogues of classical results in algebraic topology. 
\end{abstract}
\section{Introduction}
Classical results in basic algebraic topology give that groups are naturally realized as fundamental groups of spaces. For instance, free groups arise as fundamental groups of wedges of circles, any group can be realized as the fundamental group of some CW-complex, and pushouts of groups arise via the van Kampen theorem. This ability to construct geometric interpretations of discrete groups has important applications in both topology and algebra. It is natural to ask if this symbiotic relationship can be extended, via some topological version of the invariant, so that one can study topological groups by studying spaces with homotopy type other than that of a CW-complex and vice versa.\\
\indent This paper is devoted to one such topologically enriched version of the fundamental group. In particular, the fundamental group is endowed with a group topology which can be used to study spaces with complicated local structure. To approach such spaces by way of directly transferring topological data to a homotopy invariant is in contrast with the more historical shape theoretic approach where spaces are approximated by polyhedra and pro-groups take the place of groups. Our view is quickly justified, however, as we find natural connections between locally complicated spaces and the general theory of topological groups not possible in shape theory. For instance, the use of (non-discrete) topological groups in the place of groups allow us to give ``realization" results (See Section 4) in the form of topological analogues of the classical results mentioned above. Additionally, we set the foundation for a theory of generalized covering maps (called semicoverings) to appear in \cite{Brsemi}.\\
\indent In making a choice of topology on $\pi_1$, it is inevitable that we ignore many interesting topologies likely to have their own benefits and uses. The topology introduced in the present paper is partially motivated by the quotient topology used first by Biss \cite{Bi02}. Specifically, $\pi_{1}(X,x_0)$ may be viewed as the quotient space of the loop space $\oxxo$ (with the compact-open topology) with respect to the natural function $\Omega(X,x_0)\ra \pi_{1}(X,x_0)$ identifying path components. This construction results in a functor $\pi_{1}^{qtop}:\mathbf{hTop_{\ast}}\ra \qtg$ from the homotopy category of based spaces to the category of quasitopological groups\footnote{A quasitopological group is a group with topology such that inversion is continuous and multiplication is continuous in each variable. See \cite{AT08} for basic theory.} and continuous homomorphisms \cite{Bi02,CM}, however, recent results indicate that very often $\pi_{1}^{qtop}(X,x_0)$ fails to be a topological group. In fact, this failure occurs even for one-dimensional planar, Peano continua \cite{Fab10} and locally simply-connected (but non-locally path connected) planar sets \cite{Br10.1,Fab11}. For this reason, we refer to $\pi_{1}^{qtop}$ as the \textit{quasitopological fundamental group}.\\
\indent When considering which properties a ``topological" fundamental group should have, the continuity of the natural function $\oxxo\ra \pi_{1}(X,x_0)$ is certainly among the most useful. Indeed, a topology on $\pi_{1}(X,x_0)$ should remember something about the topology of loops representing homotopy classes. By definition, the quotient topology of $\pitopxxo$ is the finest topology on $\pi_{1}(X,x_0)$ with this property. On the other hand, $\pitopxxo$ is not always a topological group. One might then ask if there is a finest \textit{group topology} on $\pi_{1}(X,x_0)$ such that $\oxxo\ra \pi_{1}(X,x_0)$ is continuous. We find that the existence of this topology follows directly from the existence of the free topological groups in the sense of Markov. The resulting topological group, denoted $\pitxxo$, is invariant under homotopy equivalence and retains information beyond the covering space theory of $X$. The construction and basic theory of $\pitxxo$ appears in Section 3 after notation and preliminary facts are discussed in Section 2.\\
\indent The appearance of free topological groups in \cite{Br10.1} also motivates the construction of $\pitxxo$. There is a vast literature of free topological groups and free topological products, however, explicit descriptions of these groups are often quite complicated. While it is unlikely such groups arise naturally in shape theory, we find they appear with great generality in topological analogues of classical computational results involving $\pi_{1}^{\tau}$. For instance, just as the fundamental group of a wedge of circles $\bigvee_{X}S^1$ is free on a discrete set $X$, $\pi_{1}^{\tau}$ evaluated on a ``generalized wedge of circles" $\sus$ on arbitrary $X$ is free topological on the path component space of $X$ (Theorem \ref{theoremfreetopologicalgroups}). Secondly, mimicking the usual proof that every group is realized as a fundamental group, it is possible to realize every topological group as the fundamental group $\pit(Y,y_0)$ of a space $Y$ obtained by attaching 2-cells to a generalized wedge (Theorem \ref{realizingtopgrps}). Finally, a topological van Kampen theorem (Theorem \ref{vankampentheorem}) enhances the computability of $\pi_{1}^{\tau}$ in terms of pushouts of topological groups. These three results compose the three parts of Section 4.
\section{Preliminaries and notation}
Prior to constructing $\pitxxo$, we recall a few basic constructions and facts. For spaces $X,Y$, let $M(X,Y)$ denote the set of continuous maps $X\ra Y$ with the compact open topology generated by subbasis sets $\langle K,U\rangle=\{f|f(K)\subset U\}$ where $K\subseteq X$ is compact and $U$ is open in $Y$. If $A\subseteq X$ and $B\subseteq Y$, let $M((X,A),(Y,B))$ denote the subspace of maps such that $f(A)\subseteq B$. If $A=\{x\}$ and $B=\{y\}$ are basepoints of $X$ and $Y$, we write $M_{\ast}(X,Y)$. If $f:Y\ra Z$ is a map, let $f_{\#}:M(X,Y)\ra M(X,Z)$, $k\mapsto f\circ k$ be the continuous map induced on mapping spaces. In particular, let $\px$ be the free path space $M(I,X)$ where $I=[0,1]$ is the unit interval. Clearly this construction results in a functor $P:\spaces\ra \spaces$, where $P(f)=f_{\#}$ on morphisms.

Suppose $X$ is a space and $\mathscr{B}_{X}$ is a basis for the topology of $X$ which is closed under finite intersection (for instance, the topology of $X$ itself). We are interested in using a convenient basis for the compact-open topology of $\px$. This basis consists of neighborhoods of the form $\bigcap_{j=1}^{n}\langle K_{n}^{j},U_j\rangle$ where $K_{n}^{j}=\left[\frac{j-1}{n},\frac{j}{n}\right]$ and $U_j\in\mathscr{B}_{X}$. We frequently use these neighborhoods since they are easy to manipulate and allow us to intuit basic open neighborhoods in $\px$ as finite, ordered sets of ``instructions."

The following notation for subspaces of $\px$ will be used: For $x_0,x_1\in X$, let $P(X,x_0)=\{\alpha\in \px|\alpha(0)=x_0\}$, $P(X,x_0,x_1)=\{\alpha\in \px|\alpha(i)=x_i,\text{ }i=0,1\}$, and $\Omega (X,x_0)=P(X,x_0,x_0)$. When the choice of basepoint is understood, we often write $\ox$ for the loop space. In notation, we will not always distinguish a neighborhood $\bigcap_{j=1}^{n}\langle C_j,U_j\rangle$ from being an open set in $\px$ or any of its subspaces. We say a loop $\alpha\in \Omega(X,x)$ is \textit{trivial} if it is homotopic to the constant loop $c_x$ at $x$ and \textit{non-trivial} if it is not trivial.

We make use of the following notation for restricted paths and neighborhoods as in \cite{Br10.1}. For any fixed, closed subinterval $A\subseteq I$, let $H_A:I\ra A$ be the unique, increasing, linear homeomorphism. For a path $p:I\ra X$, the \textit{restricted path of} $p$ \textit{to} $A$ is the composite $p_{A}=p|_{A}\circ H_{A}:I\ra A\ra X$. As a convention, if $A=\{t\}\subseteq I$ is a singleton, $p_{A}$ will denote the constant path at $p(t)$. Note that if $0=t_0\leq t_1\leq ...\leq t_n=1$, knowing the paths $p_{[t_{i-1},t_i]}$ for $i=1,...,n$ uniquely determines $p$. It is simple to describe concatenations of paths with this notation: If $p_1,...,p_n:I\ra X$ are paths such that $p_j(1)=p_{j+1}(0)$ for each $j=1,...,n-1$, then the \textit{n-fold concatenation} of this sequence of paths is the unique path $q=p_1\ast p_2\ast \dots\ast p_n$ such that $q_{K_{n}^{j}}=p_j$ for each $j=1,...,n$. It is a basic fact of the compact-open topology that concatenation $\px\times_{X}\px=\{(\alpha,\beta)|\alpha(1)=\beta(0)\}\ra \px$, $(\alpha,\beta)\mapsto \alpha\ast\beta$ is continuous. If $\alpha\in \px$, then $\alpha^{-1}(t)=\alpha(1-t)$ is the \textit{reverse} of $\alpha$ and for a set $A\subseteq \px$, $A^{-1}=\{\alpha^{-1}|\alpha\in A\}$. The operation $\alpha\mapsto \alpha^{-1}$ is a self-homeomorphism of $\px$.

Let $\mathcal{U}=\bigcap_{j=1}^{n}\langle C_j,U_j\rangle$ be an open neighborhood of a path $p\in \px$. Then $\mathcal{U}_{A}=\bigcap_{A\cap C_j\neq \emptyset}\langle H_{A}^{-1}(A\cap C_j),U_j\rangle$ is an open neighborhood of $p_{A}$ called the \textit{restricted neighborhood} of $\mathcal{U}$ to $A$. If $A=\{t\}$ is a singleton, then $\mathcal{U}_{A}=\bigcap_{t\in C_j}\langle I,U_j\rangle=\langle I,\bigcap_{t\in C_j}U_j\rangle$. On the other hand, if $p=q_{A}$ for some path $q\in \px$, then $\mathcal{U}^{A}=\bigcap_{j=1}^{n}\langle H_{A}(C_j),U_j\rangle$ is an open neighborhood of $q$ called the \textit{induced neighborhood} of $\mathcal{U}$ on $A$. If $A$ is a singleton so that $p_A$ is a constant map, let $\mathcal{U}^{A}=\bigcap_{j=1}^{n}\langle \{t\},U_j\rangle$. We frequently make use of the following Lemma which is straightforward to verify.
\begin{lemma} Let $\mathcal{U}=\bigcap_{j=1}^{n}\langle C_j,U_j\rangle$ be an open neighborhood in $\px$ such that $\bigcup_{j=1}^{n}C_j=I$. Then
\begin{enumerate}
\item For any closed interval $A\subseteq I$, $(\mathcal{U}^{A})_{A}=\mathcal{U}\subseteq (\mathcal{U}_{A})^{A}$
\item If $0=t_0\leq t_1\leq t_2\leq ...\leq t_n=1$, then $\mathcal{U}= \bigcap_{i=1}^{n}( \mathcal{U}_{[t_{i-1},t_i]})^{[t_{i-1},t_i]}.$
\end{enumerate}
\end{lemma}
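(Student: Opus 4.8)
The plan is to prove both parts by unwinding the definitions of restricted and induced neighborhoods and tracking which subbasic conditions $\langle C_j, U_j\rangle$ survive each operation. First I would set up notation: for a closed interval $A \subseteq I$, recall $H_A : I \to A$ is the increasing linear homeomorphism, so $H_A^{-1}(A \cap C_j)$ is a closed subinterval of $I$ (possibly empty, possibly a point, possibly all of $I$), and the restricted neighborhood $\mathcal{U}_A = \bigcap_{A \cap C_j \neq \emptyset} \langle H_A^{-1}(A \cap C_j), U_j\rangle$ retains exactly those indices $j$ for which $A$ meets $C_j$. The key observation for part (1) is that, under the hypothesis $\bigcup_j C_j = I$, the set $\{j : A \cap C_j \neq \emptyset\}$ is nonempty and in fact $\bigcup_{A \cap C_j \neq \emptyset} (A \cap C_j) = A$, which ensures $\mathcal{U}_A$ is a neighborhood whose conditions cover all of $I$ — the natural hypothesis for iterating the construction.

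For the equality $(\mathcal{U}^A)_A = \mathcal{U}$: since $p = q_A$ is assumed, $\mathcal{U}^A = \bigcap_{j=1}^n \langle H_A(C_j), U_j\rangle$ keeps all $n$ conditions (here $C_j \subseteq I$ so $H_A(C_j) \subseteq A$). Now restrict back to $A$: for each $j$, $H_A(C_j) \subseteq A$ so certainly $A \cap H_A(C_j) = H_A(C_j) \neq \emptyset$, and $H_A^{-1}(A \cap H_A(C_j)) = H_A^{-1}(H_A(C_j)) = C_j$. Hence $(\mathcal{U}^A)_A = \bigcap_{j=1}^n \langle C_j, U_j\rangle = \mathcal{U}$. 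For the inclusion $\mathcal{U} \subseteq (\mathcal{U}_A)^A$: write $\mathcal{U}_A = \bigcap_{A \cap C_j \neq \emptyset} \langle H_A^{-1}(A \cap C_j), U_j\rangle$, so $(\mathcal{U}_A)^A = \bigcap_{A \cap C_j \neq \emptyset} \langle H_A(H_A^{-1}(A \cap C_j)), U_j\rangle = \bigcap_{A \cap C_j \neq \emptyset} \langle A \cap C_j, U_j\rangle$. Given $f \in \mathcal{U}$, i.e. $f(C_j) \subseteq U_j$ for all $j$, we get $f(A \cap C_j) \subseteq f(C_j) \subseteq U_j$ for each $j$ with $A \cap C_j \neq \emptyset$, so $f \in (\mathcal{U}_A)^A$; the inclusion can be strict because conditions with $A \cap C_j = \emptyset$ are dropped and the remaining ones only constrain $f$ on $A \cap C_j \subseteq C_j$.

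For part (2), fix $0 = t_0 \leq \cdots \leq t_n = 1$ and set $A_i = [t_{i-1}, t_i]$. By part (1) applied to each $A_i$ (legitimate since $\bigcup_j C_j = I$, and any path in $\mathcal{U}$ restricts to $A_i$ so the ``$p = q_A$'' hypothesis is available for the induced-neighborhood direction), $(\mathcal{U}_{A_i})^{A_i}$ contains $\mathcal{U}$, giving $\mathcal{U} \subseteq \bigcap_{i=1}^n (\mathcal{U}_{A_i})^{A_i}$ immediately. For the reverse inclusion, unwind: $(\mathcal{U}_{A_i})^{A_i} = \bigcap_{A_i \cap C_j \neq \emptyset} \langle A_i \cap C_j, U_j\rangle$ as computed above, so $\bigcap_i (\mathcal{U}_{A_i})^{A_i} = \bigcap_{i,j : A_i \cap C_j \neq \emptyset} \langle A_i \cap C_j, U_j\rangle$. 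Suppose $f$ lies in this intersection; I want $f(C_j) \subseteq U_j$ for each fixed $j$. Since $\bigcup_{i} A_i = I \supseteq C_j$, we have $C_j = \bigcup_i (A_i \cap C_j)$, and for each $i$ with $A_i \cap C_j \neq \emptyset$ the membership condition gives $f(A_i \cap C_j) \subseteq U_j$; taking the union over $i$ yields $f(C_j) = \bigcup_i f(A_i \cap C_j) \subseteq U_j$. Hence $f \in \mathcal{U}$, proving equality.

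The routine-but-fiddly part — the main thing to be careful about — is the bookkeeping around degenerate cases: when $A \cap C_j$ is a single point $H_A$ still behaves correctly (a constant-path convention is already built into the notation), when $A_i$ is degenerate ($t_{i-1} = t_i$) the interval $A_i \cap C_j$ is a point or empty and the argument still goes through, and one must confirm that $\bigcup_j C_j = I$ is exactly what makes $C_j = \bigcup_i (A_i \cap C_j)$ hold for the decomposition. None of this is deep, but stating the $H_A / H_A^{-1}$ identities cleanly (e.g. $H_{A_i}^{-1}(A_i \cap C_j)$ is the honest closed subinterval of $I$ it should be) is where a sloppy write-up would go wrong, so I would isolate those as a one-line sublemma before assembling the two parts.
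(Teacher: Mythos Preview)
Your argument is correct: unwinding the definitions of $\mathcal{U}_A$ and $\mathcal{U}^A$ and tracking the indices is exactly what is needed, and your computations $(\mathcal{U}^A)_A=\bigcap_j\langle H_A^{-1}H_A(C_j),U_j\rangle=\mathcal{U}$, $(\mathcal{U}_A)^A=\bigcap_{A\cap C_j\neq\emptyset}\langle A\cap C_j,U_j\rangle\supseteq\mathcal{U}$, and the reassembly $C_j=\bigcup_i(A_i\cap C_j)$ for part (2) are all valid. The paper does not actually give a proof of this lemma---it states that it is ``straightforward to verify''---so there is no alternative approach to compare against; your write-up simply supplies the omitted details.

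One small correction to your commentary: the identity $C_j=\bigcup_i(A_i\cap C_j)$ follows from $\bigcup_i A_i=I$ alone, not from the hypothesis $\bigcup_j C_j=I$. The covering hypothesis is instead what guarantees that $\mathcal{U}_A$ is again of the same form (its constraint sets $H_A^{-1}(A\cap C_j)$ cover $I$), so that the constructions may be iterated; it is not strictly required for either statement of the lemma, though it is a natural standing assumption since every basic neighborhood $\bigcap_j\langle K_n^j,U_j\rangle$ satisfies it.
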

\section{A group topology on the fundamental group}
\subsection{Free topological groups and the reflection $\tau$}
The \textit{free (Markov) topological group} on an unbased space $Y$ is the unique topological group $\fmy$ with a continuous map $\sigma:Y\ra \fmy$ universal in the sense that for any map $f:Y\ra G$ to a topological group $G$, there is a unique continuous homomorphism $\tilde{f}:\fmy\ra G$ such that $f=\tilde{f}\circ \sigma$. One can show that $\fmy$ exists by showing the forgetful functor $\tg\ra \spaces$ from the category of topological groups to topological spaces has a left adjoint $F_M:\spaces\ra \tg$. This is achieved by an application of Taut liftings or the General Adjoint Functor Theorem \cite{Po91}. The underlying group of $\fmy$ is simply the free group $F(Y)$ on the underlying set of $Y$ and $\sigma:Y\ra \fmy$ is the canonical injection of generators. The reader is referred to \cite{Th74} for proofs of the following basic facts.
\begin{lemma} \label{freetopgrpfacts} Let $X$ and $Y$ be topological spaces.
\begin{enumerate}
\item $\fmy$ is Hausdorff (discrete) if and only if $Y$ is functionally Hausdorff\footnote{A space is functionally Hausdorff if distinct points may be separated by continuous real valued functions.} (discrete).
\item The canonical map $\sigma:Y\ra \fmy$ is an embedding if and only if $Y$ is completely regular.
\item If $q:X\ra Y$ is a quotient map, then so is $F_{M}(q):\fmx\ra \fmy$.
\end{enumerate}
\end{lemma}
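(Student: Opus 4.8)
The plan is to extract one fact from the universal property, deduce the soft parts of the lemma from it and from standard features of topological groups, and then isolate the single substantive ingredient --- Graev's invariant pseudometrics on free groups --- from which both remaining ``if'' directions drop out. The fact is: \emph{the topology of $\fmy$ is the finest group topology on the underlying free group $F(Y)$ for which the generator map $\sigma$ is continuous.} Indeed, if $\mathcal T$ is any group topology on $F(Y)$ making $\sigma\colon Y\to (F(Y),\mathcal T)$ continuous, the induced continuous homomorphism $\fmy\to (F(Y),\mathcal T)$ fixes every generator, hence is the identity on $F(Y)$, and its continuity says precisely that $\mathcal T$ is coarser than the topology of $\fmy$. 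Alongside this I would use two standard properties of topological groups: every topological group is completely regular, and every Hausdorff topological group is Tychonoff. Granting these, the easy implications follow at once. \emph{(2, only if):} if $\sigma$ is an embedding, then $Y\cong\sigma(Y)$ is a subspace of the completely regular space $\fmy$. \emph{(1, Hausdorff, only if):} a Hausdorff $\fmy$ is Tychonoff, hence functionally Hausdorff, and as $\sigma$ is an injective continuous map, real-valued functions separating points of $\fmy$ pull back along $\sigma$ to separate points of $Y$. \emph{(1, discrete, only if):} if $\fmy$ is discrete, then $\sigma$ is a continuous injection into a discrete space, so $\{y\}=\sigma^{-1}(\{\sigma(y)\})$ is open in $Y$ for each $y$. \emph{(1, discrete, if):} if $Y$ is discrete, then the discrete topology on $F(Y)$ is a group topology for which $\sigma$ is continuous, so by the structural fact it is coarser than --- and being discrete, therefore equal to --- the topology of $\fmy$.

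The remaining content is the ``if'' direction of (2) (the Graev--Markov embedding theorem) together with the Hausdorff ``if'' of (1), and I would derive both from Graev's pseudometric extension (in the form appropriate to the Markov free group): every continuous pseudometric $\rho$ on $Y$ extends to a two-sided invariant group pseudometric $\bar\rho$ on $F(Y)$ which restricts to $\rho$ on $\sigma(Y)$ and which satisfies $\bar\rho(w,e)>0$ for every reduced word $w\neq e$ as soon as $\rho$ is a genuine metric on the finite set of letters occurring in $w$. Let $\mathcal T_0$ be the group topology on $F(Y)$ generated by the family $\{\bar\rho\}$, $\rho$ ranging over continuous pseudometrics on $Y$. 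Each $\bar\rho$ makes $\sigma$ continuous, so $\mathcal T_0$ is coarser than the topology of $\fmy$, while $\sigma$ is continuous for the topology of $\fmy$; hence on $\sigma(Y)$ the two topologies agree, and this common subspace topology lies between the topology of $Y$ generated by its continuous pseudometrics and the original topology of $Y$. When $Y$ is completely regular these two agree, so $\sigma$ is an embedding, giving (2, if). When $Y$ is functionally Hausdorff, for each reduced word $w\neq e$ one can pick a continuous $f\colon Y\to\mathbb R$ injective on the finitely many letters of $w$ (a sufficiently generic linear combination of functions separating those letters in pairs), so that $\rho(y,y')=|f(y)-f(y')|$ is a metric on those letters and $\bar\rho(w,e)>0$; thus $\overline{\{e\}}=\{e\}$ in $\mathcal T_0$, so $\mathcal T_0$ is Hausdorff, and the finer topology of $\fmy$ is Hausdorff as well, giving (1, if) in the Hausdorff case.

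Part (3) is formal: a surjection $q\colon X\to Y$ is a quotient map exactly when it is the coequalizer in $\spaces$ of its kernel pair $X\times_Y X\rightrightarrows X$; since $F_M$ is a left adjoint it preserves colimits, so $F_M(q)$ is the coequalizer in $\tg$ of the induced pair, and a coequalizer in $\tg$ is obtained by dividing out the normal subgroup generated by the relevant elements and giving the quotient group the quotient topology. Such a map of topological groups is open, so the underlying map $\fmx\to\fmy$ is a quotient map of spaces. The one genuine obstacle in the lemma is the technical core of Graev's extension theorem --- controlling how $\bar\rho$ interacts with cancellation in $F(Y)$ --- which is exactly where I would invoke the classical result rather than reprove it, as the authors do by citing \cite{Th74}.
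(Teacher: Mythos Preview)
The paper does not prove this lemma at all; it simply refers the reader to \cite{Th74} for proofs. Your proposal is a correct and well-organized sketch of the standard arguments: the structural fact that $\fmy$ carries the finest group topology making $\sigma$ continuous handles the discrete case and the ``only if'' directions cleanly, Graev's invariant-pseudometric extension is exactly the classical tool for the embedding theorem and the Hausdorff ``if'' direction, and the left-adjoint argument for part (3) is sound (coequalizers in $\tg$ are quotients by normal closed subgroups with the quotient topology, hence open surjections, hence topological quotient maps). This is essentially the content one would find by following the citation, so there is nothing to compare---you have supplied what the paper outsources.
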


We use free topological groups to make the following useful construction:

A \textit{group with topology} is a group $G$ with a topology but where no restrictions are made on the continuity of the operations. The topology of $G$ will typically be denoted $\mathcal{T}_{G}$. Let $\gwt$ be the category of groups with topology and continuous homomorphisms. Given any $G\in\gwt$, the identity $G\ra G$ induces the multiplication epimorphism $m_G:F(G)\ra G$. Give $G$ the quotient topology with respect to $m_G:F_{M}(G)\ra G$ and denote the resulting group as $\tau(G)$. 

In general, if $p:H\ra G$ is an epimorphism of groups and $H$ is a topological group, $G$ becomes a topological group when it is given the quotient topology with respect to $p$. Therefore, $\tau(G)$ is a topological group. The identity $G\ra \tau(G)$ is continuous since it is the composition $m_G\circ \sigma:G\ra F_{M}(G)\ra \tau(G)$ and, moreover, $\tau(G)$ enjoys the universal property: \textit{If} $f:G\to H$ \textit{is any continuous homomorphism to a topological group} $H$, \textit{then} $f:\tau(G)\to H$ \textit{is continuous.}

Indeed, $f$ induces a continuous homomorphism $\tilde{f}:F_{M}(G)\ra H$ such that the diagram $$\xymatrix{ G \ar[dr]_-{id} \ar[r]^-{\sigma} & F_{M}(G) \ar[d]^-{m_G} \ar[dr]^-{\tilde{f}} \\ & \tau(G) \ar[r]_-{f} & H }$$commutes. Since $m_G$ is quotient, $f:\tau(G)\ra H$ is continuous.

Stated entirely in categorical terms this amounts to the fact that $\tg$ is a full reflective subcategory of $\gwt$.
\begin{lemma} \label{functoralityoftau}
$\tau:\gwt\ra \tg$ is a functor left adjoint to the inclusion functor $i:\tg\hookrightarrow \gwt$. Moreover, each reflection map $r_G:G\ra \tau(G)$ is the continuous identity homomorphism.
\end{lemma}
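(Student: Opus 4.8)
The plan is to verify the two adjoint-functor bookkeeping conditions: that $\tau$ is well-defined on morphisms (i.e.\ a continuous homomorphism $f\colon G\to G'$ in $\gwt$ induces a continuous homomorphism $\tau(f)\colon\tau(G)\to\tau(G')$ with $\tau$ respecting composition and identities), and that the continuous identity maps $r_G\colon G\to\tau(G)$ are the components of a natural transformation satisfying the universal property that makes them reflection maps. Most of the substance has already been established in the discussion preceding the statement: $\tau(G)$ is a topological group, $r_G=m_G\circ\sigma$ is continuous, and $\tau(G)$ enjoys the stated universal property. So the proof is mainly a matter of assembling these pieces in the language of adjunctions.

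First I would treat functoriality. Given $f\colon G\to G'$ in $\gwt$, the underlying group homomorphism is the same map $f\colon G\to G'$; I define $\tau(f)$ to be this same underlying homomorphism. To see it is continuous as a map $\tau(G)\to\tau(G')$, note that $r_{G'}\circ f\colon G\to\tau(G')$ is a continuous homomorphism into a topological group (it is the composite of the continuous $f$ with the continuous $r_{G'}$), so the universal property of $\tau(G)$ — already proven above — gives that $r_{G'}\circ f\colon\tau(G)\to\tau(G')$ is continuous; this is exactly $\tau(f)$, and the square $r_{G'}\circ f=\tau(f)\circ r_G$ commutes because all four maps have the same underlying set-function. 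Functoriality ($\tau(g\circ f)=\tau(g)\circ\tau(f)$ and $\tau(\mathrm{id})=\mathrm{id}$) is then immediate since on underlying homomorphisms these are literal equalities and continuity has already been checked.

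Next I would establish the adjunction $\tau\dashv i$. The cleanest route is to show the family $r_G\colon G\to i(\tau(G))$ is a universal arrow from $G$ to $i$, for each $G\in\gwt$: given any topological group $H$ and any morphism $f\colon G\to i(H)$ in $\gwt$ (i.e.\ a continuous homomorphism), I must produce a unique topological-group homomorphism $\bar f\colon\tau(G)\to H$ with $i(\bar f)\circ r_G=f$. Existence is precisely the universal property proven in the excerpt: $f\colon\tau(G)\to H$ is continuous, and it satisfies $i(\bar f)\circ r_G=f$ since $r_G$ is the identity on underlying sets. Uniqueness holds because $r_G$ is an epimorphism of the underlying groups (indeed a bijection on underlying sets), so any $\bar f$ with $i(\bar f)\circ r_G=f$ must equal $f$ as a function. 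Then by the standard characterization of adjoints via universal arrows, $\tau$ extends to a left adjoint of $i$ with the $r_G$ as unit components, and naturality of $r$ is the commuting square already noted in the previous paragraph.

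There is no real obstacle here — the statement is essentially a corollary of the construction — so the "hard part" is purely expository: making sure the distinction between $G$, $\tau(G)$, and $i(\tau(G))$ as objects with the same underlying group but different (or differently-regarded) topologies is handled carefully, and that the phrase "each reflection map $r_G$ is the continuous identity homomorphism" is justified by observing $r_G=m_G\circ\sigma$ is the identity on underlying elements (since $m_G\circ\sigma=\mathrm{id}_G$ as group maps) while continuity was checked above. I would close by remarking that $\tau$ is idempotent on objects, $\tau(i(H))=H$ for $H$ a topological group, which is the content of $\tg$ being a \emph{full} reflective subcategory and follows since $r_{i(H)}\colon i(H)\to\tau(i(H))$ is then a continuous bijective homomorphism whose inverse is continuous by the universal property applied to $\mathrm{id}_H$.
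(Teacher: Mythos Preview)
Your proposal is correct and follows the same overall structure as the paper's proof: define $\tau$ as the identity on underlying homomorphisms, check continuity of $\tau(f)$, and then read off the adjunction from the universal property already established before the lemma. The one noteworthy difference is in how you verify continuity of $\tau(f)\colon\tau(G)\to\tau(G')$. The paper argues via the explicit construction: $F_M(f)\colon F_M(G)\to F_M(G')$ is continuous, the square with $m_G$ and $m_{G'}$ commutes, and $m_G$ is quotient, so $\tau(f)$ is continuous. You instead invoke the universal property of $\tau(G)$ directly, applied to the continuous homomorphism $r_{G'}\circ f\colon G\to\tau(G')$ into a topological group. Your route is arguably cleaner since it uses only the universal property (already proven in the paragraph preceding the lemma) and never returns to the free topological group; the paper's route has the minor advantage of making the functoriality of $\tau$ visibly inherited from that of $F_M$, which is useful later when properties of $F_M$ (such as preserving quotients) are transferred to $\tau$.
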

\begin{proof}
Define $\tau$ to be the identity on underlying groups and homomorphisms. Thus to see that $\tau$ is a well-defined functor, it suffices to check that $\tau(f):\tau(G)\to \tau(H)$ is continuous. Note that $F_{M}(f):F_{M}(G)\ra F_{M}(H)$ is a continuous homomorphism and the square $$\xymatrix{ F_M(G)  \ar[d]_{m_G} \ar[r]^{F_M(f)} & F_M(H) \ar[d]^{m_H} \\ \tau(G) \ar[r]_{\tau(f)} & \tau(H) }$$ of continuous homomorphisms commutes. Continuity of $\tau(f)$ follows from the fact that the left vertical map in this diagram is quotient.

The natural bijection characterizing the adjunction is $\tg(\tau(G),H)\cong \gwt(G,i(H))$, $f\mapsto f\circ r_G$ and follows from the universal property of $\tau(G)$ illustrated above.
\end{proof}
It is often convenient to think of $\tau$ as a functor which removes the smallest number of open sets from the topology of $G$ so that a topological group is obtained. 
\begin{proposition} \label{discoftaugrp}
The functor $\tau$ has the following properties.
\begin{enumerate}
\item $\tau$ preserves finite products.
\item $\tau$ preserves quotient maps.
\item If $G\in\gwt$, then $G$ is a topological group if and only if $G=\tau(G)$.
\item If $G\in\gwt$, then $G$ is discrete if and only if $\tau(G)$ is discrete.
\end{enumerate}
\end{proposition}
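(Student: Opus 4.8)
The plan is to deduce all four parts from one observation about $\tau$, together with the quotient-map facts already available. The observation is that $\mathcal{T}_{\tau(G)}$ is \emph{the finest group topology on the underlying group of $G$ that is contained in $\mathcal{T}_G$}: one inclusion is that $r_G\colon G\ra\tau(G)$ is continuous, so $\mathcal{T}_{\tau(G)}\subseteq\mathcal{T}_G$, and for the other, if $\mathcal{T}'$ is any group topology with $\mathcal{T}'\subseteq\mathcal{T}_G$ then the identity $(G,\mathcal{T}_G)\ra(G,\mathcal{T}')$ is a continuous homomorphism into a topological group, so by the universal property of $\tau(G)$ it factors through $r_G$, giving $\mathcal{T}'\subseteq\mathcal{T}_{\tau(G)}$. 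I would prove this first and then read off (3) and (4). For (3): if $G=\tau(G)$ then $G$ is a topological group because $\tau(G)$ always is; conversely, if $G$ is already a topological group then $\mathcal{T}_G$ is a group topology contained in itself, so the observation forces $\mathcal{T}_G\subseteq\mathcal{T}_{\tau(G)}$ and hence $\mathcal{T}_G=\mathcal{T}_{\tau(G)}$. For (4): if $G$ is discrete then $\mathcal{T}_G$ is the discrete topology, which is itself a group topology, so by the observation $\mathcal{T}_{\tau(G)}=\mathcal{T}_G$ and $\tau(G)$ is discrete; conversely, if $\tau(G)$ is discrete then every subset of $G$ lies in $\mathcal{T}_{\tau(G)}\subseteq\mathcal{T}_G$, so $G$ is discrete.

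For (2) I would argue directly from the defining quotient maps rather than from the observation. Let $q\colon G\ra H$ be a quotient map in $\gwt$. By Lemma~\ref{freetopgrpfacts}(3) the map $F_M(q)\colon F_M(G)\ra F_M(H)$ is a quotient map, and $m_H\colon F_M(H)\ra\tau(H)$ is a quotient map by construction, so the composite $m_H\circ F_M(q)$ is a quotient map. The commuting square from the proof of Lemma~\ref{functoralityoftau} identifies this composite with $\tau(q)\circ m_G$. Since $m_G$ and $\tau(q)$ are continuous, the standard fact that a continuous map $g$ is a quotient map whenever $g\circ f$ is a quotient map for some continuous $f$ shows that $\tau(q)$ is a quotient map.

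The one part calling for an actual idea is (1), and I expect the main obstacle to be resisting the urge to compare quotient topologies directly: a product of quotient maps need not be a quotient map, and $F_M$ does not preserve products, so that route stalls. Instead I would use the universal property. By induction it suffices to treat a product $G\times H$ of two objects (the empty product being the trivial group, already a topological group). I claim $\tau(G)\times\tau(H)$, with the identity homomorphism out of $(G\times H,\mathcal{T}_G\times\mathcal{T}_H)$, has the universal property of $\tau(G\times H)$. Given a continuous homomorphism $f\colon(G\times H,\mathcal{T}_G\times\mathcal{T}_H)\ra K$ to a topological group $K$, let $i_G\colon G\ra G\times H$ and $i_H\colon H\ra G\times H$ be the inclusions; then $f\circ i_G$ and $f\circ i_H$ are continuous homomorphisms into $K$, hence remain continuous as maps $\tau(G)\ra K$ and $\tau(H)\ra K$ by the universal property of $\tau(G)$ and $\tau(H)$. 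Since $(g,h)=(g,1)(1,h)$ and $f$ is a homomorphism, $f(g,h)=(f\circ i_G)(g)\cdot(f\circ i_H)(h)$, so $f$ factors as $\tau(G)\times\tau(H)\xrightarrow{(f\circ i_G)\times(f\circ i_H)}K\times K\xrightarrow{\,\mathrm{mult}\,}K$, a composite of continuous maps. Thus $\tau(G)\times\tau(H)$ satisfies the required universal property, and by uniqueness of reflections $\tau(G\times H)=\tau(G)\times\tau(H)$; the general finite case follows by induction. The only delicate point is exactly the one this factorization handles: $f$ must stay continuous after the topologies of the two factors are coarsened to $\mathcal{T}_{\tau(G)}$ and $\mathcal{T}_{\tau(H)}$, and it does because it has been rewritten as a composite of maps manifestly continuous for those topologies.
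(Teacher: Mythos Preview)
Your proof is correct and follows essentially the same approach as the paper: the key step for (1) is the same multiplication-after-inclusions factorization (the paper writes it as $\mu\circ(\tau(i)\times\tau(j))$ to exhibit the continuous inverse directly, while you package it as verifying the universal property of the reflection), and your argument for (2) is identical to the paper's. Your opening characterization of $\mathcal{T}_{\tau(G)}$ as the finest group topology contained in $\mathcal{T}_G$ is a clean unifying lens that makes (3) and (4) immediate; the paper argues (3) the same way via the universal property but obtains the nontrivial direction of (4) instead from the fact that $F_M(G)$, and hence its quotient $\tau(G)$, is discrete when $G$ is.
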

\begin{proof}\text{ }
\begin{enumerate}
\item To see that $\tau$ preserves finite products, let $G,H\in \gwt$. Clearly $G\times H$ with the product topology is the categorical product in $\gwt$. Apply $\tau$ to the projections of $G\times H$ to induce the continuous identity homomorphism $\tau(G\times H)\ra \tau(G)\times \tau(H)$. The inclusions $i:G\ra G\times H$, $i(g)=(g,e_H)$ and $j:H\ra G\times H$, $j(h)=(e_G,h)$ are embeddings of groups with topology. Let $\mu$ be the continuous multiplication of $\tau(G\times H)$. The composition $$\mu\circ (\tau(i)\times \tau(j)):\tau(G)\times \tau(H)\ra \tau(G\times H)\times \tau(G\times H)\ra \tau(G\times H)$$ is the continuous identity proving that $id:\tau(G\times H)\cong \tau(G)\times\tau(H)$.
\item Suppose $f:G\ra H$ is a homomorphism which is also a topological quotient map. Since $F_M$ preserves quotients, $F_{M}(f)$ is a quotient map. The top and vertical maps in the diagram of the proof of Lemma \ref{functoralityoftau} are all quotient. It follows that $\tau(f)$ is quotient.
\item One direction is obvious. If $G$ is a topological group, then the identity $id:G\ra G$ induces the continuous identity $\tau(G)\ra G$ which is the inverse of $r_G:G\ra \tau(G)$.
\item Since the identity $G\to \tau(G)$ is continuous, $G$ is discrete whenever $\tau(G)$ is. Conversely, if $G$ is discrete, then so is $F_M(G)$ and its quotient $\tau(G)$.
\end{enumerate}
\end{proof}
So far, we have only constructed $\tau(G)$ as the quotient of a free topological group. Explicit descriptions of free topological groups are known \cite{Sip05} but are, in general, quite complicated. For this reason, we provide an alternative approach to the topology of $\tau(G)$ when $G$ is a quasitopological group. We follow the well-known procedure of approximating group topologies through a transfinite process of taking quotient topologies \cite{CS90,Mal57}.

If $G$ is a quasitopological group, let $c(G)$ be the underlying group of $G$ with the quotient topology with respect to multiplication $\mu_{G}:G\times G\ra G$.
\begin{proposition} \label{qtg1}
$c:\qtg\ra \qtg$ is a functor when defined to be the identity on morphisms.
\end{proposition}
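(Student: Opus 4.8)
The plan is to verify the two conditions that make $c$ a functor: that it is well-defined on objects (i.e.\ $c(G)$ is again a quasitopological group) and that it is well-defined on morphisms (i.e.\ a continuous homomorphism $f\colon G\to H$ remains continuous as a map $c(G)\to c(H)$), functoriality on composition and identities then being automatic since $c$ does nothing to underlying groups and homomorphisms.

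First I would check that $c(G)$ is a quasitopological group. Since $c(G)$ has a finer—or rather, we must be careful: the quotient topology with respect to $\mu_G\colon G\times G\to G$ is \emph{finer} than $\mathcal{T}_G$ because the identity $c(G)\to G$ is continuous (as $\mu_G$ followed by a section-like map, or simply because $\mu_G$ is continuous into $G$, so any set open in $G$ is open in $c(G)$). Actually the key point is that $c(G)\to G$ is a continuous bijection, so $c(G)$ is at least as fine as $G$. For inversion: the inversion map $\iota\colon c(G)\to c(G)$ is continuous iff $\iota\circ\mu_G\colon G\times G\to c(G)$ is continuous (by the universal property of the quotient $\mu_G$). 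But $\iota\circ\mu_G(g,h) = (gh)^{-1} = h^{-1}g^{-1} = \mu_G\circ(\iota\times\iota)\circ\mathrm{swap}(g,h)$, and $\mu_G\circ(\iota\times\iota)\circ\mathrm{swap}\colon G\times G\to G\to c(G)$ is a composition of continuous maps (using that $\iota$ is continuous on the quasitopological group $G$ and that $\mu_G\colon G\times G\to c(G)$ is continuous by definition of the quotient topology). For left translations $\lambda_a\colon c(G)\to c(G)$, $x\mapsto ax$: continuity is equivalent to continuity of $\lambda_a\circ\mu_G\colon G\times G\to c(G)$, and $\lambda_a\circ\mu_G(g,h) = agh = \mu_G(ag,h) = \mu_G\circ(\lambda_a^G\times\mathrm{id})(g,h)$ where $\lambda_a^G$ is left translation in $G$, which is continuous; composing with the quotient map $\mu_G\colon G\times G\to c(G)$ gives continuity. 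Right translations are symmetric. Hence $c(G)\in\qtg$.

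Next, for a continuous homomorphism $f\colon G\to H$, I must show $f\colon c(G)\to c(H)$ is continuous. By the universal property of the quotient map $\mu_G\colon G\times G\to c(G)$, it suffices to show $f\circ\mu_G\colon G\times G\to c(H)$ is continuous. Since $f$ is a homomorphism, $f\circ\mu_G = \mu_H\circ(f\times f)$, and $f\times f\colon G\times G\to H\times H$ is continuous while $\mu_H\colon H\times H\to c(H)$ is continuous by definition; hence the composite is continuous. This is essentially the same diagram-chase used in the proof of Lemma \ref{functoralityoftau}, with $\mu_G$ playing the role that $m_G$ played there.

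I do not expect a serious obstacle here—every step reduces, via the universal property of a quotient map, to checking continuity of a composite of already-continuous maps, and the only thing one must be slightly careful about is the direction of the comparison between $c(G)$ and $G$ (the identity $c(G)\to G$ is continuous, not the other way). The mild subtlety worth spelling out is that $\mu_G\colon G\times G\to c(G)$ is continuous \emph{by construction} of the quotient topology, and it is this map—rather than $\mu_G\colon G\times G\to G$—that one feeds into the universal property in each verification above.
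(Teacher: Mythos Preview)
Your core argument is correct and mirrors the paper's: both reduce the continuity of inversion, translations, and $c(f)$ to the universal property of the quotient map $\mu_G\colon G\times G\to c(G)$, checking in each case that the relevant composite through $G\times G$ is continuous.

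However, your aside about the comparison between $c(G)$ and $G$ is backwards. It is the identity $G\to c(G)$ that is continuous (this is Proposition~\ref{ordinaltopologies}, proved via $g\mapsto(g,e)\mapsto \mu_G(g,e)=g$), not $c(G)\to G$; the topology of $c(G)$ is \emph{coarser} than $\mathcal{T}_G$, not finer. Your justification ``$\mu_G$ is continuous into $G$'' is precisely what fails for a quasitopological group that is not a topological group---indeed, $G=c(G)$ characterizes topological groups (Proposition~\ref{ordinaltopologies}). This error is confined to the parenthetical remark and does not affect your actual verification that $c$ is a functor, since nowhere in the subsequent checks do you use continuity of $c(G)\to G$.
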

\begin{proof}
For $G\in\qtg$, consider the diagram $$\xymatrix{G\times G \ar[r] \ar[d]_-{\mu_{G}} & G\times G \ar[d]^-{\mu_{G}} \\ c(G) \ar[r] & c(G) }$$Let $g\in c(G)$ and the top map be $(a,b)\mapsto (b^{-1},a^{-1})$ (resp. $(a,b)\mapsto (ga,b)$, $(a,b)\mapsto (a,bg)$). Each of these functions are continuous since $G$ is a quasitopological group. The diagram commutes when the bottom map is inversion (resp. left multiplication by $g$, right multiplication by $g$). Since the vertical maps are quotient, the universal property of quotient spaces implies that these operations in $c(G)$ are continuous. A similar argument gives that $c(f)=f:c(G)\ra c(G')$ is continuous for each continuous homomorphism $f:G\ra G'$ of quasitopological groups.
\end{proof}
\begin{proposition} \label{ordinaltopologies}
Let $G$ be a quasitopological group.
\begin{enumerate}
\item The identity homomorphisms $G\ra c(G)\ra \tau(G)$ are continuous.
\item Then $\tau(c(G))=\tau(G)$.
\item Then $G$ is a topological group if and only if $G=c(G)$.
\end{enumerate}
\end{proposition}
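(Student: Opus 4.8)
The plan is to deduce all three parts formally from universal properties, with one elementary input --- that the multiplication map $\mu_G\colon G\times G\to G$ of any group with topology admits the continuous ``section'' $s\colon G\to G\times G$, $g\mapsto(e_G,g)$ --- and one structural input, namely the universal property of $\tau$ together with its functoriality (Lemma \ref{functoralityoftau}). Throughout one uses that $G$, $c(G)$ and $\tau(G)$ share the same underlying group, so the ``identity'' homomorphisms between them are well defined and one need only ever compare topologies.

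For (1), the composite $\mu_G\circ s\colon G\to c(G)$ is the identity map; since $s$ is continuous and $\mu_G\colon G\times G\to c(G)$ is a quotient map (hence continuous), the identity $G\to c(G)$ is continuous. For the second arrow I would consider $f\colon G\times G\to\tau(G)$, $f(a,b)=ab$: it is the composite of the product of the continuous identities $G\to\tau(G)$ with the continuous multiplication of the topological group $\tau(G)$, hence continuous. Because the underlying groups of $c(G)$ and $\tau(G)$ agree, $f$ is constant on the fibres of the quotient map $\mu_G\colon G\times G\to c(G)$, so it factors through a continuous map $c(G)\to\tau(G)$, which is the identity homomorphism.

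For (2), applying $\tau$ to the continuous identity homomorphism $G\to c(G)$ of part (1) gives, by functoriality, a continuous identity homomorphism $\tau(G)\to\tau(c(G))$; conversely, the continuous identity homomorphism $c(G)\to\tau(G)$ of part (1) has a topological group as target, so by the universal property of $\tau$ it yields a continuous identity homomorphism $\tau(c(G))\to\tau(G)$. Since both have the same underlying group, the two topologies coincide and $\tau(c(G))=\tau(G)$.

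For (3), if $G=c(G)$ then $\mu_G\colon G\times G\to G$ is the defining quotient map of $c(G)$, hence continuous, while inversion is continuous because $G$ is a quasitopological group, so $G$ is a topological group. Conversely, if $G$ is a topological group then $\mu_G$ is continuous, so the topology of $G$ is one of those making $\mu_G\colon G\times G\to G$ continuous and is therefore coarser than (or equal to) the finest such, which is that of $c(G)$; combined with part (1), which shows the topology of $c(G)$ is coarser than that of $G$, this forces $G=c(G)$. I do not expect any genuine obstacle here: the only step needing a moment's care is the factoring argument in (1), and it is immediate once one observes that the underlying group of $\tau(G)$ is literally the underlying group of $G$; everything else is a routine invocation of the appropriate universal property.
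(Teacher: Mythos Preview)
Your proof is correct and follows essentially the same approach as the paper: for (1) the paper packages the section $g\mapsto(g,e)$ and the factoring through the quotient $\mu_G$ into a single commutative diagram, but the content is identical to what you wrote; for (2) the arguments are verbatim the same. The only cosmetic difference is in the converse of (3): you invoke directly that the quotient topology on $c(G)$ is the finest making $\mu_G$ continuous, while the paper instead appeals to $G=\tau(G)$ (Proposition~\ref{discoftaugrp}) so that the continuous identities $G\to c(G)\to \tau(G)=G$ force all three topologies to agree---both are one-line arguments.
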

\begin{proof}
\begin{enumerate}
\item Let $e$ be the identity of $G$. Consider the diagram $$\xymatrix{G\times \{e\} \ar[r]^{id\times e} \ar[d]_-{\mu_{G}}^-{\cong} & G\times G \ar[r]^-{r_G \times r_G} \ar[d]_-{\mu_{G}} & \tau(G)\times \tau(G) \ar[d]_-{\mu_{G}} \\ G \ar[r]_-{id} & c(G) \ar[r]_-{id} & \tau(G)}$$Each vertical map is quotient and the maps in the top row are continuous. The identities in the bottom row are continuous by the universal property of quotient spaces.
\item Applying $\tau$ to $id:G\ra c(G)$ gives $id:\tau(G)\ra \tau(c(G))$. The adjoint of $c(G)\ra \tau(G)$ is the inverse $\tau(c(G))\ra \tau(G)$. This gives the equality $\tau(c(G))=\tau(G)$.
\item If $G$ is a topological group, then $G=\tau(G)$ and all three topologies on $G$ agree by the first statement. Conversely, if $G=c(G)\in \qtg$, then $\mu_{G}:G\times G\ra c(G)=G$ is continuous and $G$ is a topological group.
\end{enumerate}
\end{proof}
The proof of the next proposition is a straightforward exercise left to the reader.
\begin{proposition} \label{qtg2}
If $G_{\lambda}$ is a family of quasitopological groups each with underlying group $G$ and topology $\mathcal{T}_{G_{\lambda}}$, then the topology $\bigcap_{\lambda}\mathcal{T}_{G_{\lambda}}$ on $G$ makes $G$ a quasitopological group.
\end{proposition}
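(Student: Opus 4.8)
The plan is to verify directly that, with respect to the intersection topology $\mathcal{T}=\bigcap_{\lambda}\mathcal{T}_{G_{\lambda}}$, inversion and all left and right translations of $G$ are continuous; these are precisely the conditions defining a quasitopological group. The only structural fact needed is the elementary observation that an arbitrary intersection of topologies on a fixed set is again a topology, so $\mathcal{T}$ is a legitimate topology on $G$ and a subset $U\subseteq G$ lies in $\mathcal{T}$ if and only if $U\in\mathcal{T}_{G_{\lambda}}$ for every index $\lambda$.

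First I would treat inversion. Let $\iota\colon G\ra G$ denote $g\mapsto g^{-1}$ and let $U\in\mathcal{T}$. Then $U$ is open in each $G_{\lambda}$, and since every $G_{\lambda}$ is a quasitopological group, $\iota$ is continuous as a self-map of $G_{\lambda}$; hence $\iota^{-1}(U)=U^{-1}$ is open in $G_{\lambda}$ for each $\lambda$. Therefore $U^{-1}\in\bigcap_{\lambda}\mathcal{T}_{G_{\lambda}}=\mathcal{T}$, which shows $\iota$ is continuous for $\mathcal{T}$.

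Next I would establish separate continuity of multiplication, i.e.\ continuity of the translations. Fix $g\in G$ and consider left translation $L_{g}\colon G\ra G$, $h\mapsto gh$. For $U\in\mathcal{T}$ one has $L_{g}^{-1}(U)=g^{-1}U$. Because multiplication in each $G_{\lambda}$ is continuous in each variable, $L_{g}$ is continuous as a self-map of $G_{\lambda}$, so $g^{-1}U$ is open in $G_{\lambda}$ for every $\lambda$, whence $g^{-1}U\in\mathcal{T}$. The identical argument applied to right translation $R_{g}(h)=hg$ gives $R_{g}^{-1}(U)=Ug^{-1}\in\mathcal{T}$. Thus both families of translations are continuous for $\mathcal{T}$, and $G$ equipped with $\mathcal{T}$ is a quasitopological group.

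There is essentially no obstacle: the proof is a one-line ``preimages respect intersections'' computation carried out for the inversion map and the two translation maps. The only point deserving care is that one must \emph{not} attempt to deduce continuity of the full multiplication $G\times G\ra G$; that would fail in general, and is exactly why the intersection of topological-group topologies need not be a topological-group topology. I would therefore keep the argument phrased entirely in terms of the separately continuous operations that characterize quasitopological groups.
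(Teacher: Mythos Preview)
Your proof is correct: the direct verification that inversion and all one-sided translations remain continuous under the intersection topology is exactly the right argument, and your caution about not claiming joint continuity of multiplication is well placed. The paper itself offers no proof of this proposition, leaving it as a straightforward exercise for the reader, so your approach is precisely what was intended.
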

\begin{approximation} \label{approximation} \emph{
Let $G=G_{0}$ be a quasitopological group with topology $\mathcal{T}_{G_0}$. We iterate the above construction by letting $G_{\alpha}=c(G_{\alpha-1})$ (with topology $\mathcal{T}_{G_{\alpha}}$) for each ordinal $\alpha$ with a predecessor. When $\alpha$ is a limit ordinal, let $G_{\alpha}$ have topology $\mathcal{T}_{G_{\alpha}}=\bigcap_{\beta<\alpha}\mathcal{T}_{G_{\beta}}$. Applying Propositions \ref{qtg1} and \ref{qtg2} in a simple transfinite induction argument gives that $G_{\alpha}$ is a quasitopological group for each ordinal $\alpha$. Using a standard cardinality argument, we now show the topologies $\mathcal{T}_{G_{\alpha}}$ stabilize to the topology $\mathcal{T}_{\tau(G)}$ of $\tau(G)$.}
\end{approximation}
\begin{theorem}
There is an ordinal $\alpha$ such that $G_{\alpha}=\tau(G)$.
\end{theorem}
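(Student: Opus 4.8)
The plan is to show the descending chain of topologies $\mathcal{T}_{G_0} \supseteq \mathcal{T}_{G_1} \supseteq \cdots$ must stabilize, and that the stable value is exactly $\mathcal{T}_{\tau(G)}$. First I would observe that the sequence $(\mathcal{T}_{G_\alpha})$ is weakly decreasing: by Proposition \ref{ordinaltopologies}(1) the identity $G_\alpha = c(G_{\alpha-1}) \to$ (previous stage) is continuous at successor steps, and at limit stages $\mathcal{T}_{G_\alpha}$ is an intersection of earlier topologies, so in all cases $\mathcal{T}_{G_{\alpha+1}} \subseteq \mathcal{T}_{G_\alpha}$ and more generally $\beta \leq \alpha$ implies $\mathcal{T}_{G_\alpha} \subseteq \mathcal{T}_{G_\beta}$. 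Next, the cardinality argument: each $\mathcal{T}_{G_\alpha}$ is a subset of the power set $\mathcal{P}(G)$, and a strictly decreasing transfinite sequence of subsets of a fixed set of cardinality $\kappa = |\mathcal{P}(G)|$ cannot have length exceeding $\kappa^{+}$ (indeed, if $\mathcal{T}_{G_{\alpha}} \subsetneq \mathcal{T}_{G_\beta}$ for all $\beta < \alpha$ up to $\kappa^+$, we could injectively assign to each such $\alpha$ a set in $\mathcal{T}_{G_\beta} \setminus \mathcal{T}_{G_{\beta+1}}$, forcing $\kappa^+ \leq \kappa$, a contradiction). Hence there is an ordinal $\alpha$ with $\mathcal{T}_{G_{\alpha+1}} = \mathcal{T}_{G_\alpha}$, i.e. $c(G_\alpha) = G_\alpha$.

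Once the sequence stabilizes at such an $\alpha$, Proposition \ref{ordinaltopologies}(3) applied to the quasitopological group $G_\alpha$ gives that $G_\alpha$ is a topological group. It then remains to identify $G_\alpha$ with $\tau(G)$. On one hand, $G_\alpha$ is a topological group through which the continuous identity $G = G_0 \to G_\alpha$ factors, so by the universal property of $\tau(G)$ (the reflection, Lemma \ref{functoralityoftau}) the identity $\tau(G) \to G_\alpha$ is continuous, giving $\mathcal{T}_{G_\alpha} \subseteq \mathcal{T}_{\tau(G)}$. On the other hand, I would show by transfinite induction on $\beta$ that the identity $G_\beta \to \tau(G)$ is continuous for every $\beta$: the base case $\beta = 0$ is Proposition \ref{ordinaltopologies}(1); at a successor, continuity of $G_{\beta-1} \to \tau(G)$ together with $\tau(\tau(G)) = \tau(G)$ and the naturality argument of Proposition \ref{ordinaltopologies}(1) (which shows $c$ of a group mapping continuously to a topological group still maps continuously to it) yields continuity of $c(G_{\beta-1}) = G_\beta \to \tau(G)$; at a limit ordinal, $\mathcal{T}_{G_\beta} = \bigcap_{\gamma < \beta}\mathcal{T}_{G_\gamma} \supseteq \mathcal{T}_{\tau(G)}$ follows since each $\mathcal{T}_{G_\gamma} \supseteq \mathcal{T}_{\tau(G)}$. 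In particular $\mathcal{T}_{\tau(G)} \subseteq \mathcal{T}_{G_\alpha}$, and combined with the reverse inclusion we get $G_\alpha = \tau(G)$.

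The step I expect to be the main (though modest) obstacle is the successor case of the last induction — verifying that if $H \in \qtg$ admits a continuous identity homomorphism to a topological group $K$ (with the same underlying group), then so does $c(H)$. This is essentially the content of the diagram in the proof of Proposition \ref{ordinaltopologies}(1), but one must state it as a standalone fact: precompose $\mu_H : H \times H \to H \to K$ with the map $H \times \{e\} \hookrightarrow H \times H$, note this composite $H \to K$ is the continuous identity and factors through $\mu_H : H\times H \to c(H)$, hence the identity $c(H) \to K$ is continuous because $\mu_H$ is a quotient map onto $c(H)$. The remaining pieces — monotonicity of the chain, the cardinality bound, and the limit-stage bookkeeping — are routine, so I would present them briskly and devote the detail to this factorization lemma (or simply cite Proposition \ref{ordinaltopologies} and remark that the argument there applies verbatim with $\tau(G)$ in place of the intermediate stage).
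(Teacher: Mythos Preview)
Your proposal is correct and follows essentially the same approach as the paper: both establish that the chain $(\mathcal{T}_{G_\alpha})$ is weakly decreasing with $\mathcal{T}_{\tau(G)}$ as a lower bound, invoke a cardinality argument to force stabilization, and then identify the stable value with $\tau(G)$ via Proposition~\ref{ordinaltopologies}(3) and the universal property of $\tau$. Your successor-step concern is handled most cleanly by the functoriality of $c$ (Proposition~\ref{qtg1}) together with $c(\tau(G))=\tau(G)$ (Proposition~\ref{ordinaltopologies}(3)): continuity of $G_{\beta-1}\to\tau(G)$ yields continuity of $G_\beta=c(G_{\beta-1})\to c(\tau(G))=\tau(G)$ directly.
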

\begin{proof}
We have already noted that $\mathcal{T}_{\tau(G)}\subseteq \mathcal{T}_{G_{\alpha+1}}\subseteq  \mathcal{T}_{G_{\alpha}}\subseteq \mathcal{T}_{G_0}$ for every $\alpha$. Since the identity homomorphisms $G\ra G_{\alpha}\to \tau(G)$ are continuous, $G_{\alpha}=\tau(G)$ whenever $G_{\alpha}$ is a topological group. Therefore, if $G_{\alpha}\neq \tau(G)$, then $G_{\alpha}$ is not a topological group and Prop. \ref{ordinaltopologies} implies that $\mathcal{T}_{G_{\alpha+1}}$ is a proper subset of $\mathcal{T}_{G_{\alpha}}$. Consequently, if $G_{\alpha}\neq\tau(G)$ for every ordinal $\alpha$, there are distinct sets $U_{\alpha}\in \mathcal{T}_{G_{\alpha}}-\mathcal{T}_{G_{\alpha+1}}\subseteq \mathcal{T}_{G_0}$ indexed by the ordinals. This contradicts the fact that there is an ordinal which does not inject into the power set of $\mathcal{T}_{G_0}$.
\end{proof}
Since the identity homomorphisms $G_{\beta}\ra G_{\alpha}\ra \tau(G)$ are continuous whenever $\beta\leq\alpha$, the groups $G_{\alpha}$ stabilize to $\tau(G)$.
\begin{corollary} \label{samesubgroups}
If $G\in\qtg$, then $G$ and $\tau(G)$ have the same open subgroups.
\end{corollary}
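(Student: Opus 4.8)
The plan is to treat the two directions separately. One is immediate: since the identity homomorphism $G\to\tau(G)$ is continuous, $\mathcal{T}_{\tau(G)}\subseteq\mathcal{T}_{G_0}$, so any subgroup that is open in $\tau(G)$ is open in $G$. For the reverse inclusion I would fix a subgroup $H\le G$ that is open in $G=G_0$ and prove, by transfinite induction along the tower $G_0,G_1,\dots,G_\alpha,\dots$ of Approximation \ref{approximation}, that $H$ is open in every $G_\alpha$. Since that tower stabilizes to $\tau(G)$ (the Theorem above), there is an ordinal $\alpha$ with $G_\alpha=\tau(G)$, and the induction then yields that $H$ is open in $\tau(G)$, as desired.

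The base case is the hypothesis. At a limit ordinal $\alpha$ the induction hypothesis gives $H\in\mathcal{T}_{G_\beta}$ for every $\beta<\alpha$, hence $H\in\bigcap_{\beta<\alpha}\mathcal{T}_{G_\beta}=\mathcal{T}_{G_\alpha}$ by the definition of the limit stage. The successor case is the heart of the argument. I would first record the elementary fact that in a quasitopological group the translations $x\mapsto gx$ and $x\mapsto xg$ are self-homeomorphisms (each inverse to translation by $g^{-1}$), so every left coset $gH$ and every right coset $Hg$ of an open subgroup is open. Now suppose $H$ is open in $G_\alpha$. Because $G_{\alpha+1}=c(G_\alpha)$ carries the quotient topology with respect to multiplication $\mu_{G_\alpha}\colon G_\alpha\times G_\alpha\to G_\alpha$, it suffices to show $\mu_{G_\alpha}^{-1}(H)$ is open in $G_\alpha\times G_\alpha$. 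For this, observe that for $a\in G$ we have $ab\in H\iff b\in a^{-1}H$, and that the left coset $a^{-1}H$ depends only on the right coset $Ha$ of $a$ (if $a'=ha$ with $h\in H$, then $a'^{-1}H=a^{-1}h^{-1}H=a^{-1}H$). Hence
\[
\mu_{G_\alpha}^{-1}(H)=\bigcup_{Ha\in H\backslash G}(Ha)\times(a^{-1}H),
\]
which is a union of products of open sets and therefore open. This closes the induction.

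I expect the successor step to be the only real obstacle, and it reduces entirely to the displayed coset decomposition of $\mu^{-1}(H)$ combined with the translation-homeomorphism property of quasitopological groups; the base and limit steps, and the passage back to $\tau(G)$, are routine bookkeeping. (As a side remark, the same coset-openness observation, applied to the cosets $gH$ with $g\notin H$, also recovers the classical fact that an open subgroup of a quasitopological group is automatically closed, although closedness is not needed here.)
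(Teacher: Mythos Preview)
Your proposal is correct and follows essentially the same approach as the paper: both argue by transfinite induction along the tower $G_0,G_1,\dots$ stabilizing at $\tau(G)$, with the successor step handled by writing $\mu^{-1}(H)$ as a union of products of cosets (the paper writes $\mu_G^{-1}(H)=\bigcup_{ab\in H}Hb^{-1}\times a^{-1}H$, which is the same set as your coset-indexed union) and using that cosets of an open subgroup in a quasitopological group are open.
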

\begin{proof}
Since the identity $G\ra \tau(G)$ is continuous, every open subgroup of $\tau(G)$ is open in $G$. For the converse, we check that every open subgroup of $G$ is open in $c(G)$ and proceed by transfinite induction.

Suppose $H$ is an open subgroup of $G$. Note that if $\mu_G:G\times G\ra G$ is multiplication, then $\mu_{G}^{-1}(H)=\bigcup_{ab\in H}Hb^{-1}\times a^{-1}H$. Since $G$ is a quasitopological group and $H$ is open in $G$, $\mu_{G}^{-1}(H)$ is open in $G\times G$. By construction, $\mu_{G}:G\times G\ra c(G)$ is quotient and therefore $H$ is open in $c(G)$.

Suppose $H$ is an open subgroup of $G=G_0$. Thus $H\in \mathcal{T}_{G_{0}}$. If $H\in \mathcal{T}_{G_{\beta}}$ for each $\beta<\alpha$, then certainly $H\in \mathcal{T}_{G_{\alpha}}=\bigcap_{\beta<\alpha}\mathcal{T}_{G_{\beta}}$ if $\alpha$ is a limit ordinal. If $\alpha$ is a successor ordinal, then $H\in \mathcal{T}_{G_{\alpha-1}}$ and the previous paragraph gives that $H\in \mathcal{T}_{c(G_{\alpha-1})}=\mathcal{T}_{G_{\alpha}}$. Therefore $H$ is open in $G_{\alpha}$ for every ordinal $\alpha$. Since the $G_{\alpha}$ stabilize to $\tau(G)$, $H$ is open in $\tau(G)$.
\end{proof}
%

The category $\tg$ is complete and cocomplete and the underlying group of the colimit/limit of a diagram $J\ra \tg$ agrees with the limit/colimit of the diagram $J\ra \tg\ra \grp$ of underlying groups. While limits in $\tg$ have obvious descriptions, the topology of a colimit can (as free topological groups) be quite complicated. The topological van Kampen theorem presented in Section 4.3 motivates a brief note on the relationship between $\tau$ and pushouts in $\tg$. Let $G_1\ast_{G} G_2$ denote the pushout (or \textit{free topological product with amalgamation}) of a diagram $G_1 \leftarrow G \ra G_2$. If $G=\{\ast\}$, this is simply the \textit{free topological product} $G_1\ast G_2$. The topology of $G_1\ast G_2$ is quotient with respect to the projection $k_{G_1,G_2}:F_{M}(G_1\sqcup G_2)\ra G_1\ast G_2$ (where $G_1\sqcup G_2$ is the coproduct in $\spaces$) and $G_1\ast_{G} G_2$ is quotient with respect to the projection $G_1\ast G_2\ra G_1\ast_{G} G_2$.
\begin{proposition} \label{coproductsandtau}
For groups with topology $G_1,G_2$, the canonical epimorphism $k_{G_1,G_2}:F_{M}(G_1\sqcup G_2)\ra \tau(G_1)\ast\tau(G_2)$ is a topological quotient map.
\end{proposition}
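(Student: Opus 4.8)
The plan is to recognize $k_{G_1,G_2}$ as the free topological product of the two quotient homomorphisms $m_{G_i}\colon F_M(G_i)\to\tau(G_i)$, and then to prove the general fact that a free topological product of quotient maps is again a quotient map.

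First I would set up the structural identification. Since $F_M$ is a left adjoint to the forgetful functor $\tg\to\spaces$, it preserves colimits, in particular coproducts. The coproduct in $\spaces$ is the disjoint union, while the coproduct in $\tg$ is the free topological product (the pushout over the trivial group), so there is a natural isomorphism $\Phi\colon F_M(G_1)\ast F_M(G_2)\xrightarrow{\ \cong\ }F_M(G_1\sqcup G_2)$ compatible with the canonical inclusions of $G_1$ and $G_2$. Using the universal property of the free group on $G_1\sqcup G_2$, together with the fact that each $m_{G_i}$ restricts to the continuous identity on the generating set $G_i\subseteq F_M(G_i)$, one checks that the two continuous homomorphisms $k_{G_1,G_2}\circ\Phi$ and $m_{G_1}\ast m_{G_2}$ (the latter being the homomorphism induced on coproducts by the $m_{G_i}$) agree on generators, hence coincide. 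As $\Phi$ is a homeomorphism, it therefore suffices to show that $m_{G_1}\ast m_{G_2}$ is a quotient map; and by the very definition of $\tau$, each $m_{G_i}\colon F_M(G_i)\to\tau(G_i)$ is a quotient map.

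The heart of the argument is thus the lemma: if $q_i\colon A_i\to B_i$ ($i=1,2$) are quotient homomorphisms of topological groups, then $q_1\ast q_2\colon A_1\ast A_2\to B_1\ast B_2$ is a quotient map. To prove it I would use the commuting square whose top edge is $F_M(q_1\sqcup q_2)\colon F_M(A_1\sqcup A_2)\to F_M(B_1\sqcup B_2)$, whose bottom edge is $q_1\ast q_2$, and whose vertical edges are the canonical quotient maps $k_{A_1,A_2}$ and $k_{B_1,B_2}$ (quotient by the definition of the topology on a free topological product). Commutativity is checked on the generators $A_i\subseteq F_M(A_1\sqcup A_2)$. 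Since a disjoint union of quotient maps is a quotient map, $q_1\sqcup q_2$ is quotient, hence $F_M(q_1\sqcup q_2)$ is quotient by Lemma \ref{freetopgrpfacts}(3); therefore the composite $k_{B_1,B_2}\circ F_M(q_1\sqcup q_2)=(q_1\ast q_2)\circ k_{A_1,A_2}$ is a quotient map. Now $q_1\ast q_2$ is continuous (being the morphism induced between coproducts, or because $k_{A_1,A_2}$ is quotient) and surjective, and a surjective continuous map $g$ for which some composite $g\circ f$ is quotient is itself quotient; hence $q_1\ast q_2$ is a quotient map. Taking $A_i=F_M(G_i)$, $B_i=\tau(G_i)$, $q_i=m_{G_i}$ then completes the proof.

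The step I expect to be the main obstacle — or at least the point most prone to error — is resisting the natural urge to transport the coproduct directly along the reflection maps $r_{G_i}\colon G_i\to\tau(G_i)$; these are continuous bijections but not quotient maps in general, so that route fails. One must instead work with $m_{G_i}\colon F_M(G_i)\to\tau(G_i)$, which \emph{is} a quotient map by construction, and exploit that $F_M$ converts the space-level coproduct $G_1\sqcup G_2$ into the group-level coproduct $F_M(G_1)\ast F_M(G_2)$. Everything else is a short diagram chase using the standard permanence properties of quotient maps (stability under disjoint unions, and the cancellation ``$g\circ f$ quotient $\Rightarrow g$ quotient'') together with the already-recorded fact that $F_M$ preserves quotients.
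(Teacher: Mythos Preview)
Your proposal is correct and takes essentially the same approach as the paper: both arguments use the commutative square with $F_M(q_1\sqcup q_2)$ and the canonical quotients $k$ onto free products, then apply the cancellation property of quotient maps. The only difference is packaging---you isolate the general lemma ``$q_1\ast q_2$ is quotient whenever $q_1,q_2$ are'' and then specialize to $q_i=m_{G_i}$, whereas the paper writes the specialized diagram directly (with the roles of rows and columns transposed relative to yours); the substance is identical.
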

\begin{proof} The following diagram commutes in the category of topological groups.
$$\xymatrix{F_{M}(F_{M}(G_1)\sqcup F_{M}(G_2)) \ar[d]_-{F_{M}\left(m_{G_1}\sqcup m_{G_2}\right)} \ar[rr]^-{k_{F_{M}(G_1),F_{M}(G_2)}} && F_{M}(G_1)\ast F_{M}(G_2) \ar[r]^-{\cong} &  F_{M}(G_1\sqcup G_2) \ar[d]^-{k_{G_1,G_2}} \\
F_{M}(\tau(G_1)\sqcup \tau(G_2)) \ar[rrr]_-{k_{G_1,G_2}} &&&  \tau(G_1)\ast \tau(G_2)
}$$Since $m_{G_1},m_{G_2}$ are quotient and $F_M$ preserves quotients, all maps except for the right vertical map are known to be quotient. By the universal property of quotient spaces, $k_{G_1,G_2}:F_{M}(G_1\sqcup G_2)\ra \tau(G_1)\ast\tau(G_2)$ must also be quotient.
\end{proof}
\subsection{The construction and characterizations of $\pitxxo$}
The \textit{path component space} of a space $X$, denoted  $\piztop$, is the set of path components $\pi_{0}(X)$ with the quotient topology with respect to the canonical function $X\ra \pi_{0}(X)$ identifying path components. Since a map $X\to Y$ induces a continuous map $\piztop\to \pi_{0}^{qtop}(Y)$ on path component spaces, we obtain a functor $\pi_{0}^{qtop}:\spaces\to \spaces$.

The path component space $\pi_{1}^{qtop}(X,x_0)=\pi_{0}^{qtop}(\Omega(X,x_0))$ is the \textit{quasitopological fundamental group} of $(X,x_0)$. Since multiplication and inversion in the fundamental group are induced by the continuous operations $(\alpha,\beta)\mapsto \alpha\ast\beta$ and $\alpha\to \alpha^{-1}$ in the loop space, it is immediate from the universal property of quotient spaces that $\pi_{1}^{qtop}(X,x_0)$ is a quasitopological group.

Since a based map $f:(X,x_0)\to (Y,y_0)$ induces a continuous group homomorphism $f_{\ast}=\pi_{0}^{qtop}(\Omega(f)):\pi_{1}^{qtop}(X,x_0)\to\pi_{1}^{qtop}(Y,y_0)$, this construction results in a functor $\pi_{1}^{qtop}:\hbspaces\ra \qtg$ on the homotopy category of based spaces. It is worth noting that the isomorphism class of the quasitopological fundamental group is independent of the choice of basepoint (See the proof of Proposition \ref{changingbasepoints} below). We write $\pitopx$ when the choice of basepoint is understood.

\begin{construction}
\emph{
As mentioned in the introduction, it is know that $\pi_{1}^{qtop}(X,x_0)$ fails to be a topological group even for compact planar sets, however, since $\qtg$ is a full subcategory of $\gwt$, the composition of functors $\pi_{1}^{\tau}= \tau\circ \pi_{1}^{qtop}:\hbspaces\ra \tg$ is well-defined. This new functor assigns, to a based space $X$, a topological group $\pitx$ whose underlying group is $\pi_{1}(X)$. Since the identity homomorphism $\pitopx\to \pitx$ is continuous, so is \[\pi:\Omega(X)\to \pitopx \to\pitx.\]
}
\end{construction}

The topological group $\pitx$ can be characterized in a number of ways: According to the explicit construction of $\tau$, $\pitx$ is the quotient of the free topological group $F_M(\pitopx)$. Additionally, since $F_M$ preserves quotient maps, $F_M(\pi):F_{M}(\ox)\to F_M(\pitopx)$ is quotient and thus $\pitx$ is the quotient of $F_{M}(\ox)$ with respect to the map $\alpha_1...\alpha_n\mapsto [\alpha_1\ast\dots\ast\alpha_n]$. It is also convenient to characterize $\pitx$ in terms of its universal property.
\begin{proposition} \label{finesttopology}
If $\Phi:\pitx\ra G$ is a homomorphism to a topological group $G$ such that $\Phi\circ \pi:\ox\ra \pitx\ra G$ is continuous, then $\Phi$ is also continuous.
\end{proposition}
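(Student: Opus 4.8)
The plan is to exploit the universal property of $\tau$ established in Lemma~\ref{functoralityoftau} (equivalently, the reflection $\tg \hookrightarrow \gwt$) together with the universal property of the quotient topology defining $\pitopx$. The key observation is that $\pitopx$ is a \emph{quasitopological} group (hence an object of $\gwt$), and by the Construction above $\pitx = \tau(\pitopx)$ with reflection map $r = r_{\pitopx}:\pitopx \to \pitx$ the continuous identity homomorphism. So the statement is really about factoring $\Phi$ through this reflection.

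First I would argue that the hypothesis forces $\Phi \circ r: \pitopx \to G$ to be continuous, i.e. that $\Phi$, regarded as a homomorphism out of the \emph{quasitopological} group $\pitopx$, is continuous. Indeed, $\Phi \circ r \circ \pi = \Phi \circ \pi: \ox \to G$ is continuous by hypothesis (here I use that $\pi:\ox \to \pitx$ already factors as $\ox \to \pitopx \xrightarrow{r} \pitx$, as noted in the Construction), and $\pi:\ox \to \pitopx$ is by definition the quotient map $\Omega(X) = \Omega(X,x_0) \to \pi_0^{qtop}(\Omega(X,x_0))$. Since $\pitopx$ carries the quotient topology with respect to this surjection, the universal property of the quotient topology gives that $\Phi \circ r:\pitopx \to G$ is continuous.

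Next, I would invoke the universal property of $\tau$: since $G$ is a topological group and $\Phi\circ r = \Phi \colon \pitopx \to G$ is a continuous homomorphism out of the object $\pitopx \in \gwt$, Lemma~\ref{functoralityoftau} (or equivalently the displayed universal property ``if $f:G\to H$ is any continuous homomorphism to a topological group $H$, then $f:\tau(G)\to H$ is continuous'' applied with $G = \pitopx$) yields that $\Phi:\tau(\pitopx) = \pitx \to G$ is continuous. That completes the proof.

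I do not anticipate a serious obstacle here; the only point requiring care is bookkeeping about which topology $\pitopx$ carries at each stage and the fact that $\pi$ genuinely factors through $r$ as the quotient map onto the path component space, so that the quotient universal property applies verbatim. One could alternatively give a proof directly from the characterization $\pitx = F_M(\Omega(X))/{\sim}$ via the map $\alpha_1\cdots\alpha_n \mapsto [\alpha_1 \ast \cdots \ast \alpha_n]$: the hypothesis extends $\Phi\circ\pi$ to a continuous homomorphism $F_M(\Omega(X)) \to G$ by the universal property of the free topological group, and since this is a quotient map onto $\pitx$, continuity of $\Phi$ follows — but the first route is cleaner and more conceptual.
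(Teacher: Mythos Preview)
Your proof is correct and follows essentially the same approach as the paper: first use the universal property of the quotient map $\pi:\ox\to\pitopx$ to deduce that $\Phi:\pitopx\to G$ is continuous, then invoke the universal property of $\tau$ (the adjunction of Lemma~\ref{functoralityoftau}) to conclude that $\Phi:\pitx=\tau(\pitopx)\to G$ is continuous. The paper's proof is simply a two-sentence compression of your argument, and the alternative route via $F_M(\ox)$ you sketch at the end is also valid.
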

\begin{proof}
If $\Phi\circ \pi:\ox\ra G$ is continuous, then $\Phi:\pitopx\to G$ is continuous by the universal property of quotient spaces. Since $G$ is a topological group, the adjoint $\Phi:\pitx\to G$ is continuous.
\end{proof}
\begin{theorem}
The topology of $\pitx$ is the finest group topology on $\pi_1(X)$ such that $\pi:\ox\ra \pi_{1}(X)$ is continuous.
\end{theorem}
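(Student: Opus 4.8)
The plan is to verify the two halves of the claim separately: first, that the topology of $\pitx$ is itself a group topology on $\pi_1(X)$ for which $\pi$ is continuous, and second, that it dominates every other such topology. The first half is essentially already in hand. By construction $\pitx=\tau(\pitopx)$ is a topological group (it is the quotient of the free topological group $F_M(\pitopx)$ by the homomorphism $m$), and the composite $\pi:\ox\ra\pitopx\ra\pitx$ is continuous because $\ox\ra\pitopx$ is a quotient map and the identity $\pitopx\ra\pitx$ is continuous. So the topology of $\pitx$ is a legitimate competitor in the poset of group topologies on $\pi_1(X)$ making $\pi$ continuous.

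For the second half, I would let $\mathcal{T}$ be an arbitrary group topology on the underlying set of $\pi_1(X)$ such that $\pi:\ox\ra(\pi_1(X),\mathcal{T})$ is continuous, and write $G=(\pi_1(X),\mathcal{T})$, which is by assumption a topological group. The goal is to show that the identity homomorphism $\Phi:\pitx\ra G$ is continuous, since this says precisely $\mathcal{T}\subseteq\mathcal{T}_{\pitx}$. But this is immediate from Proposition \ref{finesttopology}: $\Phi$ is a homomorphism to the topological group $G$, and $\Phi\circ\pi=\pi:\ox\ra G$ is continuous by hypothesis, so $\Phi$ is continuous. This handles every competing topology at once, so $\pitx$ realizes the maximum of the poset, not merely a maximal element, and the word ``finest'' is justified.

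There is no serious obstacle here; the real work has been front-loaded into Proposition \ref{finesttopology}, which in turn rests on the universal property of the quotient topology defining $\pitopx$ together with the reflection $\tau\dashv i$ of Lemma \ref{functoralityoftau}. If one wanted a self-contained argument, the single step worth spelling out is why continuity of $\Phi$ as a map out of $\pitopx$ (which follows directly from the quotient property of $\ox\ra\pitopx$) upgrades to continuity of $\Phi$ as a map out of $\pitx=\tau(\pitopx)$: that upgrade is exactly the universal property of $\tau$ recorded in Lemma \ref{functoralityoftau}, applied to the continuous homomorphism $\pitopx\ra G$. Everything else is routine.
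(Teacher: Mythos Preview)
Your proof is correct and follows essentially the same route as the paper: both reduce the ``finest'' claim to a single application of Proposition \ref{finesttopology} with $\Phi=id:\pitx\to G$. You are a bit more explicit than the paper in separately verifying that $\pitx$ is itself a competitor (the paper takes this as already established in the construction paragraph), but the substance is identical.
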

\begin{proof}
Suppose $G$ is a topological group with underlying group $\pi_{1}(X)$ and is such that $\pi:\ox\to G$ is continuous. Since $id\circ \pi:\ox\to \pitx\to G$ is continuous, $id:\pitx\to G$ is continuous by Proposition \ref{finesttopology}. Thus the topology of $\pitx$ is finer than that of $G$.
\end{proof}
\begin{remark}
\emph{
Fabel \cite{Fab11} has recently shown that for each $n\geq 2$ the quasitopological homotopy group $\pi_{n}^{qtop}=\pi_{1}^{qtop}\circ \Omega^{n-1}$, first studied in \cite{GHMM08,GH09,GHMM10}, fails to take values in the category of topological abelian groups. We can then define $\pi_{n}^{\tau}=\tau\circ \pi_{n}^{qtop}=\pi_{1}^{\tau}\circ \Omega^{n-1}$ to obtain a truly ``topological" higher homotopy group. In the present paper, we do not give any special attention to these higher homotopy groups.
}
\end{remark}
%
%
%
\subsection{Basic properties of $\pitxxo$}
Since $\pit$ is defined as the composition $\tau\circ \pi_{1}^{qtop}$, it is often practical to approach the topology of $\pitx$ by studying the quotient topology and the behavior of $\tau$ separately. For instance, if $h:\pitopy\cong\pitopx$ as quasitopological groups, then $\tau(h):\pitx\cong\pity$ as topological groups. In this way, the homotopy invariance of $\pit$ follows from the homotopy invariance of $\pi_{1}^{qtop}$.

The following proposition indicates the isomorphism class of $\pitx$ (for path connected $X$) is independent of the choice of basepoint.
\begin{proposition} \label{changingbasepoints}
If $\gamma:I\ra X$ is a path, then $\pi_{1}^{\tau}(X,\gamma(1))\ra  \pi_{1}^{\tau}(X,\gamma(0))$, $[\alpha]\mapsto [\gamma\ast\alpha\ast\gamma^{-1}]$ is an isomorphism of topological groups. Consequently, if $x_0,x_1$ lie in the same path component of $X$, then $\pi_{1}^{\tau}(X,x_0)\cong \pi_{1}^{\tau}(X,x_1)$.
\end{proposition}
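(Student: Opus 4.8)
The plan is to reduce to the corresponding statement about the quasitopological fundamental group and then apply $\tau$. Write $\varphi_{\gamma}\colon \pi_{1}^{qtop}(X,\gamma(1))\ra \pi_{1}^{qtop}(X,\gamma(0))$ for the function $[\alpha]\mapsto[\gamma\ast\alpha\ast\gamma^{-1}]$. Algebraically this is the classical basepoint-change homomorphism: it is a group homomorphism, and $\varphi_{\gamma^{-1}}\colon[\beta]\mapsto[\gamma^{-1}\ast\beta\ast\gamma]$ is a two-sided inverse since $[\gamma^{-1}\ast\gamma]$ and $[\gamma\ast\gamma^{-1}]$ are trivial loops. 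So the only real content is that $\varphi_{\gamma}$ and $\varphi_{\gamma^{-1}}$ are continuous.

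The key step is to observe that the loop-space conjugation map $c_{\gamma}\colon\Omega(X,\gamma(1))\ra\Omega(X,\gamma(0))$, $\alpha\mapsto\gamma\ast\alpha\ast\gamma^{-1}$, is continuous. This follows from the basic facts recalled in Section 2: iterated concatenation of paths is continuous on the appropriate pullback of copies of $\px$, reversal $\alpha\mapsto\alpha^{-1}$ is a self-homeomorphism of $\px$, and the constant functions $\alpha\mapsto\gamma$ and $\alpha\mapsto\gamma^{-1}$ are continuous; composing these, $c_{\gamma}$ is continuous. (One could instead verify this directly with the restricted/induced neighborhood calculus of Section 2, but invoking continuity of concatenation is cleaner.) Now consider the square whose vertical maps are the canonical quotient maps $q_{i}\colon\Omega(X,\gamma(i))\ra\pi_{1}^{qtop}(X,\gamma(i))$, whose top map is $c_{\gamma}$, and whose bottom map is $\varphi_{\gamma}$; it commutes by definition of $\varphi_{\gamma}$. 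Since $q_{1}$ is a quotient map and the composite $q_{0}\circ c_{\gamma}$ is continuous, $\varphi_{\gamma}$ is continuous by the universal property of quotient spaces. Applying the same argument to $\gamma^{-1}$ shows $\varphi_{\gamma^{-1}}$ is continuous, so $\varphi_{\gamma}$ is an isomorphism of quasitopological groups.

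Finally, apply the functor $\tau\colon\gwt\ra\tg$ of Lemma \ref{functoralityoftau}, which is the identity on underlying groups and homomorphisms. Since functors preserve isomorphisms, $\tau(\varphi_{\gamma})\colon\pi_{1}^{\tau}(X,\gamma(1))\ra\pi_{1}^{\tau}(X,\gamma(0))$, $[\alpha]\mapsto[\gamma\ast\alpha\ast\gamma^{-1}]$, is an isomorphism of topological groups. For the last sentence, given $x_{0},x_{1}$ in a common path component, choose a path $\gamma$ with $\gamma(0)=x_{1}$ and $\gamma(1)=x_{0}$ to obtain $\pi_{1}^{\tau}(X,x_{0})\cong\pi_{1}^{\tau}(X,x_{1})$. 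I do not anticipate a genuine difficulty here: the one step needing care is packaging $\gamma\ast\alpha\ast\gamma^{-1}$ as a continuous function of $\alpha$ in the compact-open topology, and the rest is formal manipulation of quotient maps and functoriality of $\tau$.
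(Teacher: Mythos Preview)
Your proof is correct and follows essentially the same approach as the paper: establish that $c_{\gamma}$ is continuous, deduce that the induced map on $\pi_{1}^{qtop}$ is a continuous group isomorphism with continuous inverse $c_{\gamma^{-1}}$, and then apply the functor $\tau$. The paper's proof is just a more terse version of exactly this argument.
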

\begin{proof}
The continuous operation $c_{\gamma}:\Omega(X,\gamma(1))\to \Omega(X,\gamma(0))$ given by $\alpha\mapsto \gamma\ast\alpha\ast\gamma^{-1}$ induces the continuous, group isomorphism $\Gamma:\pi_{1}^{qtop}(X,\gamma(1))\ra  \pi_{1}^{qtop}(X,\gamma(0))$, $[\alpha]\mapsto [\gamma\ast\alpha\ast\gamma^{-1}]$ on path component spaces. The inverse is continuous since $c_{\gamma^{-1}}$ is continuous. Thus $\Gamma$ is an isomorphism of quasitopological groups and $\tau(\Gamma)$ is an isomorphism of topological groups.
\end{proof}
Recall from 3. of Proposition \ref{discoftaugrp} that $\pitx=\pitopx$ if and only if $\pitopx$ is a topological group. Thus if $\pitopx$ fails to be a topological group, the topology of $\pitx$ is strictly coarser than that of $\pitopx$. Despite this fact that some open sets may ``be removed" from the quotient topology on $\pi_1(X)$ by applying $\tau$, these two groups always share the same open subgroups.
\begin{proposition}
For any based space $X$, $\pitopx$ and $\pitx$ have the same open subgroups.
\end{proposition}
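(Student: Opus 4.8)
The plan is to observe that this is an immediate instance of Corollary \ref{samesubgroups}. Recall from Section 3.2 that $\pi_{1}^{qtop}(X)=\pi_{0}^{qtop}(\Omega(X))$ is a quasitopological group, and that by the Construction of $\pit$ we have $\pitx=\tau(\pitopx)$. Thus, setting $G:=\pitopx\in\qtg$, we have $\tau(G)=\pitx$, and Corollary \ref{samesubgroups} applies verbatim to conclude that $G$ and $\tau(G)$ have the same open subgroups. In other words, there is essentially nothing to prove beyond identifying the correct earlier result and checking that its hypothesis ($G$ a quasitopological group) is met, which it is.

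If one prefers to recall the mechanism rather than merely cite it: the continuous identity homomorphism $\pitopx\ra\pitx$ makes every open subgroup of $\pitx$ automatically open in $\pitopx$, so the substance is the reverse inclusion. For that, one runs the transfinite approximation $G=G_{0},G_{1},\dots$ of $\tau(G)$ from the Approximation of $\tau(G)$ discussion. Given an open subgroup $H$ of $G_{\alpha}$, the preimage $\mu_{G_{\alpha}}^{-1}(H)=\bigcup_{ab\in H}Hb^{-1}\times a^{-1}H$ is open in $G_{\alpha}\times G_{\alpha}$ precisely because $G_{\alpha}$ is quasitopological (left and right translations are homeomorphisms), and since $\mu_{G_{\alpha}}:G_{\alpha}\times G_{\alpha}\ra c(G_{\alpha})=G_{\alpha+1}$ is a quotient map, $H$ is open in $G_{\alpha+1}$; at limit ordinals $H$ survives because it lies in every earlier topology. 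As the $G_{\alpha}$ stabilize to $\tau(G)=\pitx$, the subgroup $H$ is open in $\pitx$.

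I do not expect any genuine obstacle here: the only thing to be careful about is bookkeeping, namely ensuring that $\pitopx$ really is an object of $\qtg$ (recorded in Section 3.2, immediately after the definition of the quasitopological fundamental group) so that Corollary \ref{samesubgroups} is legitimately invocable. Consequently the proof reduces to a single sentence citing that corollary.
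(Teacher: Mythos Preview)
Your proposal is correct and matches the paper's own proof exactly: the paper's argument is the single sentence ``This is a special case of Corollary \ref{samesubgroups}.'' Your additional paragraph merely recapitulates the proof of that corollary, which is fine but unnecessary.
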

\begin{proof}
This is a special case of Corollary \ref{samesubgroups}.
\end{proof}
The following characterization of $X$ for which $\pitx$ is a discrete group reinforce the idea that the topology of $\pitx$ is defined to remember only non-trivial, local homotopical properties of spaces. 
\begin{proposition} \label{discretenessgeneral} For any path connected space $X$, the following are equivalent:
\begin{enumerate}
\item $\pitx$ is a discrete group.
\item $\pitopx$ is a discrete group.
\item For every null-homotopic loop $\alpha\in \ox$, there is an open neighborhood $\mathcal{U}$ of $\alpha$ in $\ox$ containing only null-homotopic loops.
\end{enumerate}
\end{proposition}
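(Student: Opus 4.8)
The plan is to dispatch the equivalence of (1) and (2) immediately using the general behaviour of the reflection $\tau$, and then to prove (2)$\Leftrightarrow$(3) by unwinding the definition of the quotient topology on $\pitopx$ together with the homogeneity of quasitopological groups.

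First, for (1)$\Leftrightarrow$(2): since $\pitopx$ is a quasitopological group, hence an object of $\gwt$, and $\pitx=\tau(\pitopx)$ by the Construction of $\pit$, part 4 of Proposition \ref{discoftaugrp} gives at once that $\pitopx$ is discrete if and only if $\pitx$ is discrete.

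Next, for (2)$\Leftrightarrow$(3): fix the basepoint $x_0$, let $q\colon\ox\to\pitopx$ be the canonical quotient map identifying path components, and let $N\subseteq\ox$ be the set of null-homotopic loops, so that $N=q^{-1}([c_{x_0}])$ by the standard correspondence between path components of the loop space and elements of $\pi_1(X,x_0)$. Two observations drive the argument. One, $\pitopx$ carries the quotient topology with respect to $q$, so $\{[c_{x_0}]\}$ is open in $\pitopx$ if and only if $N$ is open in $\ox$; and condition (3) asserts precisely that every point of $N$ is interior to $N$, i.e. that $N$ is open — so (3) is equivalent to $\{[c_{x_0}]\}$ being open in $\pitopx$. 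Two, homogeneity: for $g\in\pitopx$ the left translation $L_g$ is continuous with continuous inverse $L_{g^{-1}}$ (multiplication in a quasitopological group is separately continuous), hence a self-homeomorphism, so $\{g\}=L_g\{[c_{x_0}]\}$ is open whenever $\{[c_{x_0}]\}$ is. Thus $\{[c_{x_0}]\}$ open $\Leftrightarrow$ every singleton open $\Leftrightarrow$ $\pitopx$ discrete, which closes the loop.

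I do not expect a serious obstacle here; the proof is essentially bookkeeping built on Proposition \ref{discoftaugrp} and the quotient description of $\pitopx$. The one point meriting a little care is the identification $N=q^{-1}([c_{x_0}])$ and the implicit claim that ``null-homotopic'' and ``lying in the path component of $c_{x_0}$ in $\ox$'' coincide; path-connectedness of $X$ enters only to make the statement (and the isomorphism type of $\pitx$) independent of the chosen basepoint, via Proposition \ref{changingbasepoints}.
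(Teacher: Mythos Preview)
Your proof is correct and follows essentially the same approach as the paper. The paper's own argument is extremely terse: it cites Proposition \ref{discoftaugrp} for (1)$\Leftrightarrow$(2) and says (2)$\Leftrightarrow$(3) ``follows from the definition of the quotient topology''; you spell out the homogeneity step (left translations are self-homeomorphisms in a quasitopological group, so openness of $\{[c_{x_0}]\}$ implies discreteness) that the paper leaves implicit.
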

\begin{proof}
1. $\Leftrightarrow$ 2. is a special case of the second part of Proposition \ref{discoftaugrp}. 2. $\Leftrightarrow$ 3. follows from the definition of the quotient topology.
\end{proof}
It is more convenient to characterize discreteness in terms of local properties of $X$ itself by applying the characterization of discreteness of the quasitopological fundamental group in \cite{CM}.
\begin{corollary} \label{discreteness}
Suppose $X$ is path connected. If $\pitx$ is discrete, then $X$ is semilocally 1-connected. If $X$ is locally path connected and semilocally 1-connected, then $\pitx$ is discrete.
\end{corollary}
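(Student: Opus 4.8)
The plan is to reduce everything to the quasitopological fundamental group and then invoke the discreteness characterization of \cite{CM}. By the equivalence of statements 1.\ and 2.\ in Proposition~\ref{discretenessgeneral}, $\pitx$ is discrete precisely when $\pitopx$ is, and it is statement 3.\ of that proposition---``every null-homotopic loop has a neighborhood of null-homotopic loops''---that lets one pass between discreteness and a local homotopical condition on $X$. So both implications of the corollary become assertions purely about $\pi_{1}^{qtop}$.

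First I would prove that if $\pitx$ is discrete, then $X$ is semilocally $1$-connected. Fix $x\in X$. Since $X$ is path connected, Proposition~\ref{changingbasepoints} (or rather the basepoint-independence noted after it) gives that $\pi_{1}^{qtop}(X,x)$ is discrete, so by 2.\ $\Leftrightarrow$ 3.\ of Proposition~\ref{discretenessgeneral} there is a basic neighborhood $\mathcal{U}=\bigcap_{j=1}^{n}\langle K_{n}^{j},U_{j}\rangle$ of the constant loop $c_{x}$ in $\Omega(X,x)$ consisting only of null-homotopic loops; necessarily $x\in U_{j}$ for each $j$. Setting $U=\bigcap_{j=1}^{n}U_{j}$, every $\alpha\in\Omega(U,x)$ satisfies $\alpha(K_{n}^{j})\subseteq U\subseteq U_{j}$, so $\alpha\in\mathcal{U}$ is null-homotopic in $X$; that is, $\pi_{1}(U,x)\to\pi_{1}(X,x)$ is trivial. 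As $x$ was arbitrary, $X$ is semilocally $1$-connected.

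For the converse---$X$ path connected, locally path connected and semilocally $1$-connected implies $\pitx$ discrete---I would verify statement 3.\ of Proposition~\ref{discretenessgeneral} at $x_{0}$. Given a null-homotopic $\alpha\in\ox$, choose for each $t\in I$ a path-connected open $V_{t}\ni\alpha(t)$ with $\pi_{1}(V_{t},\alpha(t))\to\pi_{1}(X,\alpha(t))$ trivial; a Lebesgue number for the cover $\{\alpha^{-1}(V_{t})\}_{t\in I}$ of $I$ yields $n$ with $\alpha(K_{n}^{j})\subseteq V_{j}$ for appropriate such sets $V_{j}$, and, using local path connectedness, path-connected open $N_{j}\subseteq V_{j}\cap V_{j+1}$ with $\alpha(j/n)\in N_{j}$ for $1\le j\le n-1$. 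Then
$$\mathcal{W}=\bigcap_{j=1}^{n}\langle K_{n}^{j},V_{j}\rangle\cap\bigcap_{j=1}^{n-1}\langle\{j/n\},N_{j}\rangle\cap\ox$$
is an open neighborhood of $\alpha$. For $\beta\in\mathcal{W}$ write $a_{j}=\alpha(j/n)$, $b_{j}=\beta(j/n)$ (so $a_{0}=b_{0}=a_{n}=b_{n}=x_{0}$), choose a path $\delta_{j}$ from $a_{j}$ to $b_{j}$ inside $N_{j}$ for $1\le j\le n-1$, and take $\delta_{0},\delta_{n}$ constant. Cancelling the $\delta_{j}^{-1}\ast\delta_{j}$ shows $\beta$ is homotopic rel endpoints to $(\delta_{0}\ast\beta_{K_{n}^{1}}\ast\delta_{1}^{-1})\ast\dots\ast(\delta_{n-1}\ast\beta_{K_{n}^{n}}\ast\delta_{n}^{-1})$, whose $j$-th factor is a path from $a_{j-1}$ to $a_{j}$ that, concatenated with $(\alpha_{K_{n}^{j}})^{-1}$, forms a loop lying entirely in $V_{j}$ (here one uses $\delta_{j-1},\delta_{j}\subseteq V_{j}$), hence null-homotopic in $X$. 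So the $j$-th factor is homotopic rel endpoints to $\alpha_{K_{n}^{j}}$, whence $\beta\simeq\alpha$ and $\beta$ is null-homotopic. This gives statement 3., so $\pitopx$, and therefore $\pitx$, is discrete.

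The step I expect to be the main obstacle is this converse, specifically the consistent choice of the connecting paths $\delta_{j}$ inside the overlaps $V_{j}\cap V_{j+1}$: without local path connectedness the points $a_{j},b_{j}$ may land in different path components of $V_{j}\cap V_{j+1}$, and it is exactly to rule this out that local path connectedness (and the auxiliary sets $N_{j}$) enters. For this reason it is cleanest to cite the discreteness characterization of \cite{CM}; alternatively one can bypass the explicit neighborhood estimate by appealing to the classical construction of the universal covering space of $X$ together with the fact that a space admitting a universal covering has open trivial subgroup in $\pitopx$.
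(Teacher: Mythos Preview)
Your proposal is correct and follows exactly the route the paper takes: reduce to $\pi_{1}^{qtop}$ via Proposition~\ref{discretenessgeneral} and then invoke the discreteness characterization of \cite{CM}. The paper in fact states the corollary without proof, leaving the deduction implicit in the sentence preceding it; you have simply unpacked the content of \cite{CM} (the open neighborhood of $c_x$ giving a semilocally $1$-connecting $U$, and the Lebesgue-number/connecting-path argument in the converse), which is a faithful elaboration rather than a different approach.
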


Thus $\pitx$ is discrete when $X$ has the homotopy type of a CW-complex, manifold, or, more generally, any locally path connected space with a universal covering space. 

Since $\pitopx$ is not always a topological group, $\pi_{1}^{qtop}$ does not preserve finite products. The following proposition illustrates a first advantage of $\pit$. Even though it is not a direct consequence of part 1. of Proposition \ref{discoftaugrp}, the proof is essentially the same.
\begin{proposition} \label{product}
For any based spaces $X,Y$, there is a natural isomorphism $\phi:\pi_{1}^{\tau}(X\times Y)\ra \pitx\times\pity$ of topological groups.
\end{proposition}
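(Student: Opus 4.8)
The plan is to mimic the proof of part 1 of Proposition \ref{discoftaugrp}, replacing the abstract category $\gwt$ with the concrete quasitopological fundamental groups and using the known behavior of $\pi_{1}^{qtop}$ on products. First I would recall that for the \emph{underlying} groups there is already the standard natural isomorphism $\pi_1(X\times Y)\cong\pi_1(X)\times\pi_1(Y)$ sent by $[\alpha]\mapsto([\mathrm{pr}_X\circ\alpha],[\mathrm{pr}_Y\circ\alpha])$, so the only content is that this bijection and its inverse are continuous as maps of topological groups. Since $\Omega(X\times Y)$ is naturally homeomorphic to $\Omega(X)\times\Omega(Y)$ (an instance of $M(I,X\times Y)\cong M(I,X)\times M(I,Y)$ for the compact-open topology) and $\pi_0^{qtop}$ preserves finite products (the product of two quotient maps $X\to\pi_0(X)$, $Y\to\pi_0(Y)$ is again a quotient map, since both are open onto their images after restricting to saturated sets — or more simply one cites this as a known fact about $\pi_0^{qtop}$), we get that $\pi_1^{qtop}(X\times Y)\cong\pi_1^{qtop}(X)\times\pi_1^{qtop}(Y)$ as quasitopological groups. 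That handles the pointset level completely.

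Next I would apply $\tau$. The subtlety is that $\tau$ only preserves finite products of \emph{groups with topology} in the abstract sense (part 1 of Proposition \ref{discoftaugrp}), so I would argue exactly as in that proof: applying $\tau$ to the two projections $\pi_1^{qtop}(X\times Y)\to\pi_1^{qtop}(X)$ and $\pi_1^{qtop}(X\times Y)\to\pi_1^{qtop}(Y)$ induces a continuous identity homomorphism $\phi:\pi_1^{\tau}(X\times Y)\to\pitx\times\pity$. For the reverse direction, I would use the inclusions $i:\Omega(X)\hookrightarrow\Omega(X\times Y)$, $\alpha\mapsto(\alpha,c_{y_0})$ and $j:\Omega(Y)\hookrightarrow\Omega(X\times Y)$, $\beta\mapsto(c_{x_0},\beta)$, which are continuous and induce continuous homomorphisms on $\pi_1^{qtop}$ and hence, after $\tau$, continuous homomorphisms $\tau(i_*):\pitx\to\pi_1^{\tau}(X\times Y)$ and $\tau(j_*):\pity\to\pi_1^{\tau}(X\times Y)$. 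Then the composite
\[
\mu\circ(\tau(i_*)\times\tau(j_*)):\pitx\times\pity\to\pi_1^{\tau}(X\times Y)\times\pi_1^{\tau}(X\times Y)\to\pi_1^{\tau}(X\times Y),
\]
where $\mu$ is the (continuous) multiplication of $\pi_1^{\tau}(X\times Y)$, is a continuous homomorphism, and on underlying groups it is the inverse of $\phi$ because $([\alpha],[\beta])\mapsto[(\alpha,c_{y_0})]\cdot[(c_{x_0},\beta)]=[(\alpha,\beta)]$ and $\mathrm{pr}_X,\mathrm{pr}_Y$ recover $([\alpha],[\beta])$. Hence $\phi$ is a continuous homomorphism with continuous inverse, i.e.\ an isomorphism of topological groups.

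Finally I would check naturality: given based maps $f:X\to X'$ and $g:Y\to Y'$, the square relating $\phi$ for $X\times Y$ and for $X'\times Y'$ commutes already at the level of underlying groups (it is the classical naturality of $\pi_1(-\times-)$), and since all four maps are continuous homomorphisms, commutativity as a diagram in $\tg$ is automatic. I expect the only genuine obstacle is justifying that $\pi_0^{qtop}$ — equivalently $\pi_1^{qtop}$ — preserves the finite product, i.e.\ that $q_X\times q_Y:X\times Y\to\pi_0(X)\times\pi_0(Y)$ is a quotient map; this is the one point where one cannot simply invoke $\tau$-formalism and must either cite it from the literature on $\pi_1^{qtop}$ or give the short direct argument (the product of quotient maps onto path-component spaces is quotient because path components of $X\times Y$ are products of path components). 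The remark in the statement that the proof ``is essentially the same'' as part 1 of Proposition \ref{discoftaugrp} confirms this is the intended route, the extra ingredient being precisely the product-preservation of the quotient functor $\pi_1^{qtop}$ that is already available here.
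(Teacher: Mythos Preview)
Your core argument---continuity of $\phi$ from functoriality applied to the projections, and continuity of $\phi^{-1}$ as the composite $\mu\circ(i_{\ast}\times j_{\ast})$ where $\mu$ is multiplication in the topological group $\pi_{1}^{\tau}(X\times Y)$---is correct and is exactly what the paper does.

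However, you have wrapped this in an incorrect framing. Your assertion that $\pi_{1}^{qtop}(X\times Y)\cong\pi_{1}^{qtop}(X)\times\pi_{1}^{qtop}(Y)$ as quasitopological groups is \emph{false} in general, and the paper says so explicitly in the sentence immediately preceding this proposition: ``Since $\pi_{1}^{qtop}(X)$ is not always a topological group, $\pi_{1}^{qtop}$ does not preserve finite products.'' Your justification (``the product of two quotient maps is again a quotient map'') fails: products of quotient maps need not be quotient, and path components being products of path components only gives the underlying set-theoretic bijection, not that the product map is quotient. This is precisely why the paper remarks that the result is ``not a direct consequence'' of part 1 of Proposition \ref{discoftaugrp}: one cannot first identify $\pi_{1}^{qtop}(X\times Y)$ with the product in $\gwt$ and then apply $\tau$.

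Fortunately, your actual argument never uses this false claim. The maps $i_{\ast},j_{\ast}$ you wrote down are induced by the based maps $X\to X\times Y$ and $Y\to X\times Y$ of \emph{spaces}, so their continuity on $\pi_{1}^{\tau}$ is pure functoriality; and the continuity of $\mu$ uses only that $\pi_{1}^{\tau}(X\times Y)$ is a topological group. Delete the first paragraph about $\pi_{1}^{qtop}$ preserving products and the final paragraph identifying it as the ``genuine obstacle''---there is no obstacle, and the ingredient you thought was needed is both false and unnecessary.
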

\begin{proof}
The projections $X\times Y\ra X$ and $X\times Y\ra Y$ induce the continuous group isomorphism $\phi:\pi_{1}^{\tau}(X\times Y)\ra\pitx\times\pity$ given by $\phi([(\alpha,\beta)])=([\alpha],[\beta])$. Let $x_0$ and $y_0$ be the basepoints of $X$ and $Y$ respectively and $c_{x_0}:I\ra X$ and $c_{y_0}:I\ra Y$ be the constant loops. The maps $i:X\ra X\times Y$, $x\mapsto (x,y_0)$ and $j:Y\ra X\times Y$, $x\mapsto (x_0,y)$ induce the continuous homomorphisms $i_{\ast}:\pitx\ra \pi_{1}^{\tau}(X\times Y)$, $i_{\ast}([\alpha])=[(\alpha,c_{y_0})]$ and $j_{\ast}:\pity\ra \pi_{1}^{\tau}(X\times Y)$, $j_{\ast}([\beta])=[(c_{x_0}, \beta)]$. Let $\mu$ be group multiplication of the topological group $\pi_{1}^{\tau}(X\times Y)$. The composition $\mu\circ (i_{\ast}\times j_{\ast})$ is the continuous inverse of $\phi$.
\end{proof}
The author does not know if $\pi_{1}^{\tau}$ preserves arbitrary products, however, there is a positive answer when the factor spaces have discrete fundamental groups.
\begin{example} \label{exampleproductsofcwcomplexes} \emph{
Let $X_{n}$, $n\geq 1$ be a countable family of spaces such that $\pi_{1}^{\tau}(X_n)$ is discrete (e.g. $X_n$ a polyhedron or manifold) for each $n$. Since the quotient map $\pi_{n}:\Omega(X_{n})\ra \pi_{1}^{qtop}(X_{n})=\pi_{1}^{\tau}(X_{n})$ is open for each $n\geq 1$, both vertical maps in the diagram$$\xymatrix{ \ox \ar[rr]^-{\cong} \ar[d]_-{\pi} && \prod_{n\geq 1}\Omega(X_{n}) \ar[d]^-{\prod_{n\geq 1} \pi_{n}} \\ \pitopx=\pitx \ar[rr]_-{\cong} && \prod_{n\geq 1}\pi_{1}^{qtop}(X_{n})= \prod_{n\geq 1}\pi_{1}^{\tau}(X_{n}) }$$ are quotient. The top (resp. bottom) map is the canonical homeomorphism (resp. group isomorphism). It follows from the universal property of quotient spaces that $\pitx$ is a countable product of discrete groups and is therefore a zero-dimensional, metrizable topological group. Moreover, if $\pi_{1}(X_n)\neq 1$ for infinitely many $n$, $\pitx$ is not discrete. For instance, $\pi_{1}^{\tau}\left(\prod_{n\geq 1}S^1\right)$ is isomorphic to the countable product $\prod_{n\geq 1}\mathbb{Z}$.
}
\end{example}
\subsection{Separation and the first shape group}
Characterizing the spaces $X$ for which $\pitx$ is Hausdorff is a non-trivial task. Since every Hausdorff topological group is functionally Hausdorff, the continuity of the identity $\pitopx\ra \pitx$ offers an obvious necessary condition:
\begin{proposition} \label{hausdorfftofunctionallyhausdorff}
If $\pitx$ is Hausdorff, then $\pitopx$ is functionally Hausdorff.
\end{proposition}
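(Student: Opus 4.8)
The plan is to exploit the continuous identity homomorphism $\iota:\pitopx\ra\pitx$ furnished by the Construction of $\pi_{1}^{\tau}$, together with the fact (recalled in the paragraph preceding the statement) that every Hausdorff topological group is functionally Hausdorff. The underlying principle is that functional Hausdorffness is inherited along any continuous injection into a functionally Hausdorff target, and $\iota$ is exactly such a map, being the identity on the underlying set $\pi_{1}(X)$.

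Concretely, I would first observe that $\pitx$ is a topological group (by construction $\pitx=\tau(\pitopx)$), so if $\pitx$ is Hausdorff then it is functionally Hausdorff: this is the single substantive input, and it is the standard fact that Hausdorff topological groups are completely regular (Birkhoff--Kakutani), so it may simply be quoted. Next, given distinct classes $a,b\in\pi_{1}(X)$, apply functional Hausdorffness of $\pitx$ to obtain a continuous map $f:\pitx\ra\mathbb{R}$ with $f(a)\neq f(b)$. Since $\iota$ is continuous, $f\circ\iota:\pitopx\ra\mathbb{R}$ is a continuous real-valued function with $(f\circ\iota)(a)\neq(f\circ\iota)(b)$. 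As $a$ and $b$ were arbitrary distinct points, $\pitopx$ is functionally Hausdorff, which is the assertion.

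There is essentially no obstacle here beyond invoking the complete regularity of Hausdorff topological groups; the rest is formal. If a fully self-contained argument were desired one could instead produce, for the fixed pair $a,b$, a continuous (quasi-invariant) pseudometric on $\pitx$ separating them via the usual chain-of-neighborhoods construction and then pull it back along $\iota$, but since $\pitx$ is a genuine topological group this standard construction need not be reproduced.
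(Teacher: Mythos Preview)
Your argument is correct and is exactly the one the paper has in mind: the sentence preceding the proposition already records that Hausdorff topological groups are functionally Hausdorff and that the identity $\pitopx\ra\pitx$ is continuous, and your proof simply spells out the pull-back of separating real-valued functions along this continuous bijection. There is nothing to add.
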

The $T_1$ axiom in $\pitopx$ is thus necessary for $\pitx$ to be Hausdorff but cannot be sufficient since there are spaces $Y$ such that $\pi_{1}^{qtop}(Y)$ is $T_1$ but not functionally Hausdorff \cite[Example 4.13]{Br10.1}. The converse of Proposition \ref{hausdorfftofunctionallyhausdorff} is true in the special case of Corollary \ref{hausdorffpitsus}. It remains an open question whether or not the converse holds in full generality. 

In the pursuit of practical necessary conditions for the existence of separation axioms, we recall the notion of a ``homotopically path-Hausdorff space" introduced in \cite{FRVZ11}. This notion is stronger than the notion of ``homotopically Hausdorff space" useful for studying generalized universal coverings of locally path connected spaces \cite{FZ07}.
\begin{definition} \emph{
A space $X$ is \textit{homotopically path-Hausdorff} if given any paths $\alpha,\beta:[0,1]\to X$ such that $\alpha(0)=\beta(0)$, $\alpha(1)=\beta(1)$, and $\alpha$ and $\beta$ are not homotopic rel. endpoints, then there exist a partition $0=t_0<t_1<t_2<...<t_k=1$ and open sets $U_1,U_2,...U_k\subseteq X$ with $\alpha([t_{i-1},t_{i}])\subset U_i$ such that for any other path $\gamma:[0,1]\to X$ satisfying $\gamma(t_{i})=\alpha(t_{i})$ for $i=0,...,k$ and $\gamma([t_{i-1},t_{i}])\subset U_i$ for $i=1,...,k$, $\gamma$ is not homotopic to $\beta$ rel. endpoints.}
\end{definition}
%
%
The following Proposition gives that the homotopically path-Hausdorff property is necessary for the Hausdorff separation axiom in our topologized fundamental group. The homotopically path-Hausdorff property is even more closely related to the $T_1$ axiom in quasitopological fundamental groups; this connection, studied by Paul Fabel and the author, will be explored further in a future paper.
\begin{proposition} \label{haushomotopyhaus}
If $X$ is path connected and $\pitx$ is Hausdorff, then $X$ is homotopically path-Hausdorff.
\end{proposition}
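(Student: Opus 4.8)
The plan is to argue by contrapositive: assuming $X$ is \emph{not} homotopically path-Hausdorff, I will produce two distinct points of $\pi_1(X)$ that cannot be separated in $\pitx$, in fact that cannot be separated in any group topology on $\pi_1(X)$ for which $\pi:\ox\ra\pi_1(X)$ is continuous. Since by the theorem above the topology of $\pitx$ \emph{is} the finest such group topology, this will show $\pitx$ is not Hausdorff. First I would unwind the failure of the homotopically path-Hausdorff property at a fixed pair of paths $\alpha,\beta$ with common endpoints $x_0=\alpha(0)=\beta(0)$, $\alpha(1)=\beta(1)$, which are not path-homotopic: for \emph{every} partition $0=t_0<\dots<t_k=1$ and every choice of open sets $U_i\supseteq\alpha([t_{i-1},t_i])$, there is a path $\gamma$ with $\gamma(t_i)=\alpha(t_i)$, $\gamma([t_{i-1},t_i])\subseteq U_i$, and $\gamma\simeq\beta$ rel endpoints. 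Conjugating by a fixed path from $x_0$ back to $\alpha(1)$ (using Proposition \ref{changingbasepoints} to reduce to loops based at $x_0$), I may assume $\alpha(1)=x_0$, so $\alpha,\beta\in\ox$ and $g:=[\alpha\ast\beta^{-1}]\neq 1$ in $\pi_1(X,x_0)$.

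The heart of the argument is to show that $1$ lies in the closure of $\{g\}$ — equivalently, using that a topological group is Hausdorff iff it is $T_1$ iff $\{1\}$ is closed, that every open neighborhood of $1$ in $\pitx$ contains $g$. By translating, it is enough to show every open neighborhood $W$ of $g$ in $\pitx$ contains $1$. Here is where I use continuity of $\pi:\ox\ra\pitx$: $\pi^{-1}(W)$ is an open neighborhood of $\alpha\ast\beta^{-1}$ in $\ox$, hence contains a basic neighborhood $\mathcal{V}=\bigcap_{j}\langle C_j,V_j\rangle$ of $\alpha\ast\beta^{-1}$, which I may refine (subdividing and intersecting, using Lemma \ref{freetopgrpfacts}'s companion subdivision lemma — the first unnumbered Lemma) to a neighborhood of the form coming from a partition $0=s_0<\dots<s_m=1$ with open sets $V_\ell\supseteq(\alpha\ast\beta^{-1})([s_{\ell-1},s_\ell])$. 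I then arrange the partition to refine $\{0,\tfrac12,1\}$ so that the first half, rescaled, gives a partition $0=t_0<\dots<t_k=1$ of $\alpha$ with opens $U_i$, and the second half records $\beta^{-1}$ (equivalently $\beta$) along a partition with opens; the failure of the homotopically path-Hausdorff property furnishes a path $\gamma$ matching $\alpha$ on the nose at the $t_i$, staying in the $U_i$, with $\gamma\simeq\beta$ rel endpoints. Then the loop $\gamma\ast\beta^{-1}$ lies in $\mathcal{V}\subseteq\pi^{-1}(W)$, so $[\gamma\ast\beta^{-1}]\in W$; but $\gamma\simeq\beta$ forces $[\gamma\ast\beta^{-1}]=1$, so $1\in W$, as desired.

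From $1\in\overline{\{g\}}$ with $g\neq 1$ we conclude $\pitx$ is not $T_1$, hence not Hausdorff, completing the contrapositive. The step I expect to be the main obstacle is the bookkeeping in the middle paragraph: massaging an arbitrary basic open neighborhood of $\alpha\ast\beta^{-1}$ in the loop space into the rigid ``partition plus open cover of $\alpha$'' shape demanded by the definition of homotopically path-Hausdorff, while simultaneously keeping track of the $\beta$-half so that the concatenated witness loop $\gamma\ast\beta^{-1}$ genuinely lands in the given neighborhood. This requires the restricted/induced-neighborhood calculus of Section 2 (Lemma \ref{freetopgrpfacts}'s preceding Lemma, parts 1 and 2) to split and reassemble neighborhoods across the concatenation point $t=\tfrac12$, and care that refining the partition only shrinks the neighborhood. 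Everything else — reducing to based loops via Proposition \ref{changingbasepoints}, and converting ``neighborhoods of $1$ meet $\{g\}$'' into failure of Hausdorffness — is routine.
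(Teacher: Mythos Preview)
Your argument is correct and is the contrapositive of the paper's direct proof, resting on the same observation that a basic compact-open neighborhood of $\alpha\ast\beta^{-1}$ in $\Omega(X,\alpha(0))$ encodes, on its first half, exactly a partition-and-cover of $\alpha$ of the form demanded by the definition. The conjugation step to force $\alpha(1)=x_0$ is an unnecessary detour---$\alpha\ast\beta^{-1}$ is already a loop based at $\alpha(0)$ regardless---and the paper simply works with it directly; dropping that step also spares you the (omitted) check that failure of the homotopically path-Hausdorff property survives replacing $\alpha,\beta$ by $\alpha\ast p,\beta\ast p$.
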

\begin{proof}
Suppose $\pitx$ is Hausdorff. Proposition \ref{changingbasepoints} gives that $\pit(X,x)$ is Hausdorff for all $x\in X$. Suppose $\alpha,\beta:[0,1]\to X$ are paths such that $\alpha(0)=\beta(0)$, $\alpha(1)=\beta(1)$, but which are not homotopic rel. endpoints. Thus $L=\alpha\ast\beta^{-1}$ is a non-trivial loop based at $\alpha(0)$.

Since $\pit(X,\alpha(0))$ is Hausdorff, there is an open neighborhood $U$ of $[L]$ which does not contain the identity $[c_{\alpha(0)}]$. Since $\pi:\Omega(X,\alpha(0))\to \pit(X,\alpha(0))$ is continuous, $\pi^{-1}(U)$ is an open neighborhood of $L$. Take a positive even integer $n=2k$ and open neighborhoods $U_1,...,U_n$ such that $\mathcal{U}=\bigcap_{j=1}^{n}\left\langle \left[\frac{j-1}{n},\frac{j}{n}\right],U_j\right\rangle$ is an open neighborhood of $L$ contained in $\pi^{-1}(U)$. Clearly $\mathcal{U}$ contains only non-trivial loops.

Let $t_i=\frac{i}{k}$ for each $i=0,1,...,k$ and note that $\alpha([t_{i-1},t_{i}])\subset U_i$ for $i=1,...,k$. Suppose $\gamma:[0,1]\to X$ is a path satisfying $\gamma(t_{i})=\alpha(t_{i})$ for each $i=0,...,k$ and $\gamma([t_{i-1},t_{i}])\subset U_i$ for $i=1,...,k$. Now the concatenation $\gamma\ast\beta^{-1}$ lies in $\mathcal{U}$ and therefore must be non-trivial. Thus $\gamma$ and $\beta$ are not homotopical relative to their endpoints.
\end{proof}

In the search for conditions sufficient to guarantee separation, we turn to shape theory. One is often interested in whether or not the fundamental group of a space $X$ with complicated local structure naturally injects into its first shape homotopy group since such an embedding provides a convenient way to understand the fundamental group. It turns out that this property is strong enough to guarantee that $\pitx$ is Hausdorff. We refer to \cite{MS82} for the preliminaries of shape theory.
\begin{Topologicalshapegroups} \label{topshapegroup} \emph{
One way to see that $\pitx$ is very often Hausdorff is to use the natural topology on the first shape homotopy group. The homotopy category of polyhedra $\mathbf{hPol_{\ast}}$ is the full-subcategory of $\hbspaces$ consisting of spaces with the homotopy type of a polyhedron. It is well-known that $\mathbf{hPol_{\ast}}$ is a dense subcategory of $\hbspaces$ \cite[\S 4.3, Theorem 7]{MS82}. This means that for every based space $X$, there is an $\mathbf{hPol_{\ast}}$-expansion $X\ra(X_{\lambda},p_{\lambda \lambda '},\Lambda)$ (for instance, the \u{C}ech expansion). Specifically, the expansion consists of polyhedra $X_{\lambda}, \lambda\in \Lambda$ and maps $p_{\lambda}:X\ra X_{\lambda}$ such that $p_{\lambda}$ is homotopic to $p_{\lambda \lambda '}\circ p_{\lambda '}$ whenever $\lambda '\geq \lambda$ in the directed set $\Lambda$. The induced continuous homomorphisms $(p_{\lambda})_{\ast}:\pi_{1}^{\tau}(X)\ra \pi_{1}^{\tau}(X_{\lambda})$ satisfy $(p_{\lambda})_{\ast}=(p_{\lambda \lambda '})_{\ast}\circ(p_{\lambda '})_{\ast}$.\\
\indent The \textit{first homotopy pro-group} of a based space is the inverse system $pro$-$\pi_{1}(X)=\left(\pi_{1}^{\tau}(X_{\lambda}),(p_{\lambda \lambda '})_{\ast},\Lambda\right)$ of discrete groups in $\mathbf{pro}$-$\tg$ where the bonding maps are the homomorphisms $(p_{\lambda \lambda '})_{\ast}:\pi_{n}^{\tau}(X_{\lambda '})\ra \pi_{1}^{\tau}(X_{\lambda})$. The \textit{first shape homotopy group} of $X$ is the limit $\breve{\pi}_{1}(X)=\varprojlim pro$-$\pi_{1}(X)$ which, as an inverse limit of discrete groups, is a Hausdorff topological group. The continuous homomorphisms $(p_{\lambda})_{\ast}$ induce a natural, continuous homomorphism $\phi:\pi_{1}^{\tau}(X)\ra \breve{\pi}_{1}(X)$.\\
\indent When $\phi$ is injective, $X$ is said to be $\mathit{\pi_1}$\textit{-injective}. This property is particularly useful since the fundamental group of a $\mathit{\pi_1}$-injective space can be identified as a subgroup of $\breve{\pi}_{1}(X)$. Some recent results on $\pi_1$-injectivity include \cite{CC06,Eda98,FZ05,FG05}.
}
\end{Topologicalshapegroups}
\begin{proposition} \label{injtohaus}
If $X$ is $\pi_1$-injective, then $\pitx$ is Hausdorff.
\end{proposition}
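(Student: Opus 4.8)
The plan is to pull back the Hausdorff separation of $\breve{\pi}_1(X)$ along the continuous homomorphism $\phi:\pitx \ra \breve{\pi}_1(X)$. The key structural observation is that a topological group $G$ is Hausdorff if and only if the trivial subgroup $\{e\}$ is closed, and that the preimage of a closed subgroup under a continuous homomorphism is a closed subgroup. So the argument reduces to showing that $\phi^{-1}(\{\breve{e}\})$ is trivial as a subgroup, where $\breve{e}$ is the identity of $\breve{\pi}_1(X)$.

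First I would recall from the construction in \ref{topshapegroup} that $\breve{\pi}_1(X) = \varprojlim pro\text{-}\pi_1(X)$ is an inverse limit of discrete groups, hence a Hausdorff topological group; in particular $\{\breve{e}\}$ is closed in $\breve{\pi}_1(X)$. Next, since $\phi$ is a continuous homomorphism of topological groups, $K = \phi^{-1}(\{\breve{e}\})$ is a closed normal subgroup of $\pitx$. By the definition of $\pi_1$-injectivity, $\phi$ is injective, so $K = \{[c_{x_0}]\}$ is the trivial subgroup. Therefore the trivial subgroup is closed in $\pitx$, and since $\pitx$ is a topological group, being $T_1$ (equivalently, having closed singletons, which follows from the trivial subgroup being closed via translation) forces it to be Hausdorff — recall every $T_1$ topological group is Hausdorff (indeed completely regular). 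This last implication is standard and can be cited or noted in a line.

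The main obstacle — really the only subtlety — is making sure the implication ``trivial subgroup closed $\Rightarrow$ Hausdorff'' is invoked cleanly: one should note that in a topological group, closedness of $\{e\}$ gives closedness of every singleton by continuity of translations, hence the $T_1$ axiom, and then use the well-known fact (see, e.g., the reference \cite{AT08} already cited in the paper) that a $T_1$ topological group is Hausdorff. Everything else is formal: continuity and injectivity of $\phi$ are handed to us by \ref{topshapegroup} and the hypothesis, and the Hausdorffness of the shape group is built into its definition as an inverse limit of discrete groups. I do not anticipate needing any explicit computation with loops or with the topology of $\pitx$ itself; the proof is a short diagram-free deduction of about four or five sentences.
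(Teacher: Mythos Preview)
Your proposal is correct and essentially matches the paper's proof, which is the one-line observation that any topological group continuously injecting into a Hausdorff group is Hausdorff. You have simply unpacked this fact via the ``closed identity $\Rightarrow$ Hausdorff'' characterization, whereas the paper leaves it as a standard statement; the content is the same.
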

\begin{proof}
Any topological group continuously injecting into a Hausdorff group is Hausdorff.
\end{proof}
The converse of Proposition \ref{injtohaus} is false in general: See \cite[Example 2.4]{FG05} for a simple counterexample. It remains open whether or not the converse holds for locally path connected spaces.
\begin{example} \label{he} \emph{
For each integer $n\geq 1$, let $C_n=\left\{(x,y)\in\mathbb{R}^{2}|\left(x-\frac{1}{n}\right)^{2}+y^2=\frac{1}{n^2}\right\}$ so that $\mathbb{HE}= \bigcup_{n \geq 1}C_n$ is the usual Hawaiian earring. The shape group $\breve{\pi}_{1}(\mathbb{HE})$ is the inverse limit $\varprojlim_{n}F_n$ of discrete free groups; here $F_n$ is the free topological group on the discrete space of cardinality $n$. The canonical homomorphism $\phi:\pi_{1}(\mathbb{HE})\ra \varprojlim_{n}F_n$ is injective \cite{MM86} and therefore $\pi_{1}^{\tau}(\mathbb{HE})$ is Hausdorff. Since $\pi_{1}^{\tau}(\mathbb{HE})$ is a topological group, it is necessarily Tychonoff. Currently, it is not even known if the quasitopological fundamental group $\pi_{1}^{qtop}(\mathbb{HE})$ is regular.\\
\indent Though the homomorphism $\phi$ is injective, it is known that $\phi:\pi_{1}^{qtop}(\mathbb{HE})\to \varprojlim_{n}F_n$ fails to be a topological embedding \cite{Fab05.3}. Since $\pi_{1}^{qtop}(\mathbb{HE})$ is not a topological group \cite{Fab10}, the topology of $\pi_{1}^{\tau}(\mathbb{HE})$ is strictly coarser than the quotient topology of $\pi_{1}^{qtop}(\mathbb{HE})$. From this fact alone, it is plausible that $\phi:\pi_{1}^{\tau}(\mathbb{HE})\ra \varprojlim_{n}F_n$ is an embedding.\\
\indent The semicovering theory \cite{Brsemi} of $\mathbb{HE}$, however, can be used to show that $\phi:\pi_{1}^{\tau}(\mathbb{HE})\ra \varprojlim_{n}F_n$ also fails to be an embedding. Indeed, if this map is an embedding, the normal, open subgroups $K_n=\ker\left(\pi_{1}^{\tau}(\mathbb{HE})\to F_n\right)$ form a neighborhood base at the identity of $\pi_{1}^{\tau}(\mathbb{HE})$. A simple consequence of such a basis is: An subgroup $H$ is open in $\pi_{1}^{\tau}(\mathbb{HE})$ if and only if there is a covering map $p:Y\to X$ such that $p_{\ast}(\pi_1(Y))=H$. But this cannot be the case since open subgroups of $\pi_{1}^{\tau}(\mathbb{HE})$ are classified by semicoverings of $\mathbb{HE}$ and there are examples of semicovering maps $p:Y\to \mathbb{HE}$ which are not covering maps \cite[Example 3.8]{Brsemi}.\\
\indent This means we have three distinct natural topologies on $\pi_{1}(\mathbb{HE})$ from finest to coarsest: the quotient topology, the topology of $\pi_{1}^{\tau}(\mathbb{HE})$, and the initial topology with respect to $\phi:\pi_{1}(\mathbb{HE})\ra \varprojlim_{n}F_n$.
}
\end{example}
\begin{example}\emph{
The map $\phi:\pi_{1}^{\tau}(X)\ra \breve{\pi}_{1}(X)$ also fails to be an embedding in simple non-locally path connected cases. Consider the generalized wedge $\sus$ in Example \ref{exampleconvergingcircles} below where $X$ is the one-point compactification of the natural numbers. The free topological group $\pitsus\cong F_{M}(X)$ is not first countable \cite[7.1.20]{AT08} but since $\sus$ is a planar continuum, the shape group $\breve{\pi}_{1}(\sus)$ is metrizable and $\phi:\pi_{1}(\sus)\ra \breve{\pi}_{1}(\sus)$ is injective \cite{FZ05}. Therefore the topology of $\pitsus$ is strictly finer than the initial topology with respect to $\phi$.}
\end{example}
Despite the failure of embedding in the previous two examples, there are simple examples of non-semilocally simply connected spaces whose fundamental group embeds in its first shape group. For instance, if $X=\prod_{n\geq 1}S^1$ is the countable product of circles, as in Example \ref{exampleproductsofcwcomplexes}, then $\phi:\pi_{1}^{\tau}(X)\to\breve{\pi}_{1}(X)$ is an isomorphism of non-discrete metrizable groups. These examples motivate the following general question.
\begin{question} \label{question} \emph{
For which spaces $X$ is $\phi:\pitx\ra \breve{\pi}_{1}(X)$ a topological embedding?}
\end{question}
\section{Main Results: Realizing topological groups as fundamental groups}
The fact that one can conveniently realize a free group or a free product of groups as the fundamental group of a space is quite useful, for instance, in proving the Nielsen\textendash Schreier theorem and Kurosh subgroup theorem. The main results of the current paper, laid out in the following three sections, are meant to be such ``realization results" for topological groups.
\subsection{Generalized wedges of circles and free topological groups} \label{generalizedwedges}
The ability of $\pi_{1}^{\tau}$ to distinguish spaces with isomorphic fundamental groups can be observed by making a computation analogous to the elementary fact that the fundamental group of a wedge of circles indexed by a set $X$ is free on $X$. The \textit{generalized wedge of circles on (an unbased space)} $\mathit{X}$ is constructed as the reduced suspension $\sus$ of the space $X_+=X\sqcup \{\ast\}$ with isolated basepoint. Equivalently, it is the quotient space $X\times I/X\times \{0,1\}$. The image of $(x,t)\in X\times I$ in the quotient is written as $x\wedge t$. Similarly, $A\wedge B$ denotes the image of $A\times B\subset X\times I$. The canonical basepoint is $x_0=X\wedge \{0,1\}$. Since $\sus\cong \bigvee_{X}S^1$ whenever $X$ is a discrete space, this construction clearly generalizes the construction of a wedge of circles.

The main result in this section is that the fundamental group of a generalized wedge of circles $\sus$ is the free topological group on the path component space of $X$. The proof calls upon the technical computation of $\pitop$ in \cite{Br10.1}.
\begin{theorem} \label{theoremfreetopologicalgroups}
There is a natural isomorphism $h_X:F_{M}(\piztop)\ra \pitsus$ of topological groups.
\end{theorem}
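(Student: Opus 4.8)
The plan is to build the isomorphism $h_X$ from the universal properties on both sides. On the one hand, $\pitsus$ carries the finest group topology making $\pi:\Omega(\sus)\to\pi_1(\sus)$ continuous (by the theorem characterizing the topology of $\pit$), and on the other hand $F_M(\piztop)$ is the free topological group on the path component space of $X$. So I will produce continuous homomorphisms in both directions and check they are mutually inverse. First I would recall the known classical fact (and the computation of $\pitop$ in \cite{Br10.1}) that the underlying group of $\pi_1(\sus,x_0)$ is the free group $F(\pi_0(X))$ on the set of path components of $X$: a path component of $X$ containing $x$ corresponds to the homotopy class of the loop $t\mapsto x\wedge t$, and these loops freely generate. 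This identifies the underlying groups of $F_M(\piztop)$ and $\pitsus$ via a canonical group isomorphism $h_X$; the entire content of the theorem is that $h_X$ is a homeomorphism.

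For continuity of $h_X:F_M(\piztop)\to\pitsus$, I would use the universal property of the free topological group: it suffices to give a continuous map $\piztop\to\pitsus$ sending a path component to its associated loop class. This map factors as $\pi_0^{qtop}$ applied to the map $X\to\Omega(\sus)$, $x\mapsto(t\mapsto x\wedge t)$, which is continuous into the loop space (an easy compact–open topology check), composed with $\pi:\Omega(\sus)\to\pitopx\to\pitsus$; since $\piztop$ is a quotient of $X$, continuity of the induced map on path component spaces follows. Hence $h_X$ is continuous. For the reverse direction, I would use Proposition \ref{finesttopology} (equivalently the finest-topology theorem): it is enough to show that the composite $\Omega(\sus)\to\pi_1(\sus)\xrightarrow{h_X^{-1}}F_M(\piztop)$ is continuous. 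This is the crux of the argument and will lean on the technical machinery of \cite{Br10.1}: given a loop $\alpha\in\Omega(\sus)$, one subdivides $I$ according to the passages of $\alpha$ through the basepoint $x_0$, reads off from each subloop the path component of $X$ it traverses together with its orientation, and thereby writes $[\alpha]$ as a reduced word in $F(\pi_0(X))$. The restricted-neighborhood formalism from Section 2 (Lemma~1.1 and the $\mathcal{U}_A$, $\mathcal{U}^A$ notation) is designed precisely to show that small perturbations of $\alpha$ in the compact–open topology change this word only within a controlled neighborhood in $F_M(\piztop)$, so the word map is continuous. I would invoke the relevant computation from \cite{Br10.1} rather than redo it.

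The hard part, as indicated, is the continuity of the ``word-reading'' map $\Omega(\sus)\to F_M(\piztop)$, because the number of syllables and their subdivision points vary discontinuously with $\alpha$; this is exactly where the delicate neighborhood estimates of \cite{Br10.1} (where $\pitop$ as a quasitopological group is analyzed) must be imported, together with the fact that $F_M(\piztop)$ has the finest topology compatible with $\sigma:\piztop\to F_M(\piztop)$ so that it suffices to control finitely many syllables at a time on compact pieces. Once both maps are continuous, they are mutually inverse by construction on underlying groups, so $h_X$ is an isomorphism of topological groups. Naturality in $X$ is then routine: a map $f:X\to Y$ induces $\pi_0^{qtop}(f):\piztop\to\pi_0^{qtop}(Y)$, hence $F_M(\pi_0^{qtop}(f))$, and the obvious square with the induced map $\pit(\Sigma(f_+))$ commutes because it does so on the generating loops $x\mapsto x\wedge t$, and both vertical maps are the isomorphisms $h_X,h_Y$. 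Finally, I would remark that combined with Lemma~\ref{freetopgrpfacts} this yields, e.g., that $\pitsus$ is discrete exactly when $\piztop$ is discrete and Hausdorff exactly when $\piztop$ is functionally Hausdorff, recovering and refining the discreteness statements of Corollary~\ref{discreteness} in this special case.
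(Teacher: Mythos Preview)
Your proposal is correct and follows essentially the same route as the paper: both directions use the unit $u:X\to\Omega(\sus)$, $x\mapsto(t\mapsto x\wedge t)$, with the continuity of $h_X$ coming from the universal property of $F_M$ applied to $u_\ast:\piztop\to\pitsus$, and the continuity of $h_X^{-1}$ imported from the main technical result of \cite{Br10.1}. The only cosmetic difference is that the paper quotes \cite{Br10.1} as giving continuity of $h_X^{-1}:\pitop\to F_M(\piztop)$ and then passes to $\pitsus$ via the universal property of $\tau$, whereas you phrase the same step as continuity of the word-reading map $\Omega(\sus)\to F_M(\piztop)$ followed by Proposition~\ref{finesttopology}; since $\Omega(\sus)\to\pitop$ is quotient, these formulations are equivalent.
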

\begin{proof}
The unit $u:X\ra \Omega(\sus)$ of the adjunction $\bspaces(\sus,Y)\cong \spaces(X,\Omega Y)$ induces a continuous injection $u_{\ast}:\piztop\ra \pitop$ on path component spaces. One of the main results in \cite{Br10.1} is that for arbitrary $X$, $u_{\ast}$ induces a natural group isomorphism $h_{X}:F(\piz)\ra \pi_{1}(\sus)$ so that $h_{X}^{-1}\circ u_{\ast}$ is the canonical injection of generators and $h_{X}^{-1}:\pitop\ra F_{M}(\piztop)$ is continuous. Since $F_{M}(\piztop)$ is a topological group, $h_{X}^{-1}:\pitsus\ra F_{M}(\piztop)$ is continuous. Additionally, the continuous injection $u_{\ast}:\piztop\ra \pitop\ra \pitsus$ induces (by the universal property of free topological groups) the continuous inverse $h_{X}:F_{M}(\piztop)\ra \pitsus$. 
\end{proof}
Prior to studying some special cases of generalized wedges, we apply the theory of free topological groups in two Corollaries.
\begin{corollary}
A quotient map (resp. homotopy equivalence) $f:X\ra Y$ induces a quotient map (resp. an isomorphism) $f_{\ast}:\pitsus\ra \pi_{1}^{\tau}(\Sigma(Y_+))$ of topological groups. 
\end{corollary}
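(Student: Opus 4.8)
The plan is to deduce this Corollary directly from the natural isomorphism $h_X:F_M(\pi_0^{qtop}(X))\to \pi_1^{\tau}(\Sigma(X_+))$ of Theorem \ref{theoremfreetopologicalgroups} by transporting properties of $f$ through the chain of functors $\pi_0^{qtop}$ and $F_M$. The key observation is naturality: for a map $f:X\to Y$ the square relating $h_X$, $h_Y$, $(\Sigma(f_+))_{\ast}$ and $F_M(\pi_0^{qtop}(f))$ commutes, so $f_{\ast}:\pi_1^{\tau}(\Sigma(X_+))\to \pi_1^{\tau}(\Sigma(Y_+))$ is (up to the isomorphisms $h_X,h_Y$) just $F_M(\pi_0^{qtop}(f))$.

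First I would treat the quotient-map case. If $f:X\to Y$ is a quotient map, then the induced map on path component spaces $\pi_0^{qtop}(f):\pi_0^{qtop}(X)\to \pi_0^{qtop}(Y)$ is a quotient map: this follows because $\pi_0^{qtop}(X)$ carries the quotient topology from $X$, $\pi_0^{qtop}(Y)$ carries the quotient topology from $Y$, and one checks the composite $X\to Y\to \pi_0^{qtop}(Y)$ factors as $X\to \pi_0^{qtop}(X)\to \pi_0^{qtop}(Y)$ with $X\to \pi_0^{qtop}(Y)$ a quotient map (being the composite of two quotient maps), whence $\pi_0^{qtop}(X)\to \pi_0^{qtop}(Y)$ is quotient. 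Then part 3 of Lemma \ref{freetopgrpfacts} gives that $F_M(\pi_0^{qtop}(f))$ is a quotient map of topological groups, and transporting along the isomorphisms $h_X$, $h_Y$ shows $f_{\ast}$ is a quotient map.

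Next I would handle the homotopy-equivalence case, which is even softer. Both $\pi_0^{qtop}$ and $F_M$ are functors, and $\Sigma(-)_+$ followed by $\pi_1^{\tau}$ is a functor on the homotopy category (since $\pi_1^{\tau}:\hbspaces\to\tg$ is a functor and reduced suspension is homotopy-invariant), so a homotopy equivalence $f$ has a homotopy inverse $g$ with $f\circ g\simeq id$ and $g\circ f\simeq id$, forcing $f_{\ast}$ and $g_{\ast}$ to be mutually inverse continuous isomorphisms. Alternatively, one notes a homotopy equivalence $f$ induces a homotopy equivalence on path component spaces hence a homeomorphism after $\pi_0^{qtop}$ up to homotopy — but the cleanest route is simply functoriality on the homotopy category.

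The only mild obstacle is the first case: verifying that $\pi_0^{qtop}$ preserves quotient maps. This is a routine ``composition of quotient maps is quotient / quotient of a quotient is quotient'' diagram chase, but it does require being slightly careful that the canonical map $\pi_0^{qtop}(X)\to\pi_0^{qtop}(Y)$ genuinely receives the quotient topology and not merely a continuous bijection's worth of structure. Once that is in hand, everything else is a formal consequence of functoriality and Lemma \ref{freetopgrpfacts}(3), so the proof is short.
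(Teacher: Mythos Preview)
Your proof is correct and follows essentially the same approach as the paper: for the quotient case you, like the paper, use that both $\pi_0^{qtop}$ and $F_M$ preserve quotients and transport along the natural isomorphism $h_X$. For the homotopy-equivalence case the paper invokes that $\pi_0^{qtop}$ takes homotopy equivalences to homeomorphisms (then applies $F_M$), whereas you prefer direct homotopy functoriality of $\pi_1^{\tau}\circ\Sigma((-)_+)$; both routes are immediate and amount to the same observation.
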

\begin{proof}
This follows directly from the fact that both the functors $F_M$ and $\pi_{0}^{top}$ preserve quotients and that $\pi_{0}^{top}$ takes homotopy equivalences to homeomorphisms.
\end{proof}
\begin{corollary} \label{hausdorffpitsus}
For any unbased space $X$, the following are equivalent:
\begin{enumerate}
\item $\pitsus$ is Hausdorff.
\item $\pitop$ is functionally Hausdorff.
\item $\piztop$ is functionally Hausdorff.
\end{enumerate}
\end{corollary}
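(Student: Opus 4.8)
The plan is to prove the cycle of equivalences $(1)\Leftrightarrow(2)\Leftrightarrow(3)$ by combining Theorem \ref{theoremfreetopologicalgroups} with part 1 of Lemma \ref{freetopgrpfacts}, and then bridging the gap between $\pitop$ and $\piztop$ via the adjunction unit. The key observation is that $\pitsus \cong F_M(\piztop)$, so the Hausdorff question for $\pitsus$ is, by Lemma \ref{freetopgrpfacts}(1), exactly the question of whether $\piztop$ is functionally Hausdorff. This immediately gives $(1)\Leftrightarrow(3)$.

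For the remaining equivalence with (2), I would first establish $(2)\Rightarrow(3)$: the continuous injection $u_{\ast}:\piztop\ra\pitop$ from the proof of Theorem \ref{theoremfreetopologicalgroups} pulls back continuous real-valued separating functions on $\pitop$ to continuous real-valued separating functions on $\piztop$, since distinct points of $\piztop$ map to distinct points of $\pitop$. Hence if $\pitop$ is functionally Hausdorff so is $\piztop$. For the converse $(3)\Rightarrow(2)$, I would observe that $\pitop$ retracts onto (a homeomorphic copy of) $\piztop$ via the canonical retraction $\sus\ra\sus$ collapsing $X\wedge[1/2,1]$ onto the basepoint, or more directly: since $h_X^{-1}:\pitop\ra F_M(\piztop)$ is continuous and $F_M(\piztop)$ is functionally Hausdorff whenever $\piztop$ is (again Lemma \ref{freetopgrpfacts}(1)), the space $\pitop$ admits enough continuous real-valued functions to separate points, i.e. it is functionally Hausdorff. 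Here one uses that $h_X^{-1}$ is injective on underlying sets, so separating functions on $F_M(\piztop)$ pull back to separating functions on $\pitop$.

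The main obstacle is being careful that functional Hausdorffness transfers correctly along the maps involved, since it is not a property inherited by arbitrary subspaces or preserved by arbitrary continuous maps, but only pulls back along \emph{continuous injections}. The relevant maps here—$u_{\ast}:\piztop\ra\pitop$ and $h_X^{-1}:\pitop\ra F_M(\piztop)$—are all continuous injections, so the argument goes through, but this is the point requiring attention. Everything else reduces to citing Theorem \ref{theoremfreetopologicalgroups} and Lemma \ref{freetopgrpfacts}(1). I would write the proof as: $(1)\Leftrightarrow(3)$ is immediate from Theorem \ref{theoremfreetopologicalgroups} and Lemma \ref{freetopgrpfacts}(1); $(2)\Rightarrow(3)$ since $u_{\ast}$ is a continuous injection; $(3)\Rightarrow(2)$ since $h_X^{-1}:\pitop\ra F_M(\piztop)$ is a continuous injection into a functionally Hausdorff group.
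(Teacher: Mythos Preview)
Your proof is correct and uses the same ingredients as the paper's proof: Theorem~\ref{theoremfreetopologicalgroups}, Lemma~\ref{freetopgrpfacts}(1), and the continuous injection $u_{\ast}$. The only organizational difference is that the paper argues the cycle $1\Rightarrow 2\Rightarrow 3\Rightarrow 1$, invoking Proposition~\ref{hausdorfftofunctionallyhausdorff} for $1\Rightarrow 2$, whereas you establish $1\Leftrightarrow 3$ directly and then handle $2\Leftrightarrow 3$ separately; your extra step $3\Rightarrow 2$ via the continuous bijection $h_X^{-1}$ is not needed in the paper's cycle but is perfectly valid.

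One small correction to your commentary: functional Hausdorffness \emph{is} inherited by arbitrary subspaces (indeed it pulls back along any continuous injection, which is exactly what you use), so your caveat about subspaces is misstated, though it does not affect the argument.
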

\begin{proof}
1. $\Rightarrow$ 2. is a case of Proposition \ref{hausdorfftofunctionallyhausdorff}. 2.  $\Rightarrow$ 3. follows from the fact that $u_{\ast}:\piztop\ra \pitop$ is a continuous injection. 3. $\Rightarrow$ 1. follows from Theorem \ref{theoremfreetopologicalgroups} and 1. of Lemma \ref{freetopgrpfacts}.
\end{proof}
Note that when $dim(X)=0$, or more generally when $X$ is totally path disconnected, the isomorphism of Theorem \ref{theoremfreetopologicalgroups} simplifies to $\pitsus\cong \fmx$. In this case, $\sus$ provides a particularly nice geometric interpretation of $\fmx$.
\begin{remark} \emph{
In the case $X=\Omega(Y)$, the counit $\Sigma(\Omega(Y)_+)\ra Y$ induces the multiplication map $m_{\pitopy}:F_{M}(\pitopy)\cong \pi_{1}^{\tau}\left(\Sigma(\Omega(Y)_+)\right)\ra \pi_{1}^{\tau}(Y)$ used to define the topology of $\pi_{1}^{\tau}(Y)$.}
\end{remark}
\begin{example} \label{exampleconvergingcircles} \emph{
Let $\omega$ be the discrete space of natural numbers. We find an interesting case when $X=\{1,2,...,\infty\}$ is the one-point compactification of the natural numbers. We write $X=\omega+1$ when we wish to view $X$ as the first compact, infinite ordinal. The generalized wedge $\sus$ is homeomorphic to the planar continuum $$S^1\cup\bigcup_{n\geq 1}\left\{(x,y)\in \mathbb{R}^{2}|\left(x-\frac{1}{n}\right)^{2}+y^{2}=\left(1+\frac{1}{n}\right)^{2}\right\}$$
\begin{figure}[H] \centering \includegraphics[height=2.7cm]{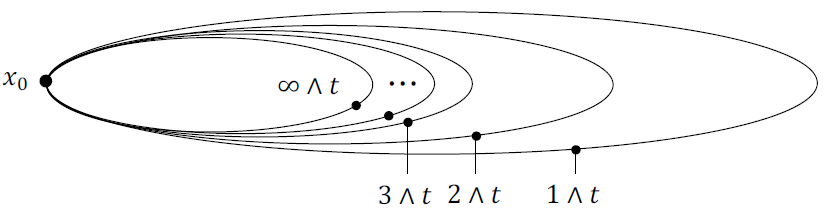} 
\caption{The generalized wedge on the one-point compactification of the natural numbers.}
\end{figure}
\indent Let $dX$ be the underlying set of $X$ with the discrete topology. The identity $dX\ra X$ induces a weak equivalence $\bigvee_{dX}S^1\cong \Sigma((dX)_{+})\ra \sus$, but $\pi_{1}^{\tau}\left( \bigvee_{dX}S^1\right)$ is discrete by Corollary \ref{discretenessgeneral} and $\pitsus\cong F_{M}(\omega+1)$ is not discrete. Therefore $\sus$ is not homotopy equivalent to a countable wedge of circles.\\
\indent One can generalize this ability to distinguish homotopy types of one-dimensional spaces with isomorphic fundamental groups by noting that if two zero-dimensional spaces $X$ and $Y$ have the same cardinality but are such that $F_{M}(X)\ncong F_{M}(Y)$, then $\pi_{1}(\sus)\cong F(X)\cong F(Y)\cong \pi_{1}(\Sigma(Y_+))$ but $\pitsus\ncong \pi_{1}^{\tau}(\Sigma(Y_+))$.
}
\end{example}
Let $X$ and $Y$ be countably infinite compact Hausdorff spaces. Any such space is homeomorphic to a countable successor ordinal (with the order topology) by the Mazurkiewicz-Sierpinski Theorem \cite{MazSier20} and embeds into $\rat$. Thus $\sus$ and $\Sigma(Y_+)$ are one-dimensional (but non-Peano) planar continua with $\pi_{1}(\sus)\cong F(X)\cong F(Y)\cong \pi_{1}(\Sigma(Y_+))$. Baars \cite{Baars92} gives the following characterization using a result of Graev \cite{Gr62}: $\fmx$ and $\fmy$ are isomorphic topological groups if and only if there are countable ordinals $\alpha,\beta$ such that $X\cong \alpha+1$, $Y\cong \beta+1$, and $\max(\alpha,\beta)<\left(\min(\alpha,\beta)\right)^{\omega}$. This result is put to use in the next example.
\begin{example}\emph{
If $X=\omega+1$ and $Y=\omega^{\omega}+1$, then $\sus$ and $\Sigma(Y_+)$ are one-dimensional (but non-Peano) planar continua. Though these spaces have isomorphic fundamental groups (free on countable generators), they cannot be homotopy equivalent. Indeed, if $\alpha$ and $\beta$ are ordinals homeomorphic to $\omega$ and $\omega^{\omega}$ respectively, then we necessarily have $\alpha=\omega$ and $\beta=\omega^{\omega}$ \cite{KL06}. But $\max(\alpha,\beta)=\omega^{\omega}=\left(\min(\alpha,\beta)\right)^{\omega}$ and therefore $\pitsus$ and $\pi_{1}^{\tau}(\Sigma(Y_+))$ are non-isomorphic, non-discrete, Hausdorff topological groups.
}
\end{example}
\subsection{Every topological group is a fundamental group}
The fact that every group is realized as a fundamental group is easily arrived at by the process of attaching 2-cells to wedges of circles. Similarly, we attach 2-cells to generalized wedges of circles (Section \ref{generalizedwedges}) to realize every topological group as the fundamental group of some space. First, we note that attaching n-cells to a space changes the topology of the fundamental group in a convenient way. The following Lemma first appeared in \cite{Bi02}; an alternative proof appears in \cite{Br10.1}.
\begin{lemma}  \label{attachinglemma1}
Suppose $Z$ is a based space, $n\geq 2$ an integer, and $f:S^{n-1}\ra Z$ is a based map. Let $Z'=Z\sqcup_{f}e^{n}$ be the space obtained by attaching an n-cell to $Z$ via the attaching map $f$. The inclusion $j:Z\hookrightarrow Z'$ induces a homomorphism $j_{\ast}:\pi_{1}^{qtop}(Z)\ra \pi_{1}^{qtop}(Z')$ which is also a topological quotient map.
\end{lemma}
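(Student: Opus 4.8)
The plan is to identify the source of a potential failure of the quotient map property — namely, that $j_\ast:\pi_1^{qtop}(Z)\to\pi_1^{qtop}(Z')$ on the level of loop-space quotients is induced by $\Omega(j):\Omega(Z)\to\Omega(Z')$, but $\Omega(j)$ need not be a quotient map. So the genuine work is to understand $\pi_1(Z')$ concretely: by van Kampen (the classical one), attaching an $n$-cell for $n\ge 3$ leaves $\pi_1$ unchanged, while for $n=2$ one kills the normal subgroup $N$ generated by $[f]$. In either case $j_\ast$ is a surjective homomorphism on underlying groups, so the content is purely topological: we must show a set $U\subseteq\pi_1(Z')$ is open iff $j_\ast^{-1}(U)$ is open in $\pi_1^{qtop}(Z)$.

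First I would set up the commuting square relating the loop-space quotient maps $\pi_Z:\Omega(Z)\to\pi_1^{qtop}(Z)$ and $\pi_{Z'}:\Omega(Z')\to\pi_1^{qtop}(Z')$ via $\Omega(j)$ and $j_\ast$. Since $\pi_{Z'}$ is a quotient map, $U$ is open in $\pi_1^{qtop}(Z')$ iff $\pi_{Z'}^{-1}(U)$ is open in $\Omega(Z')$; and since $\pi_Z$ is a quotient map, $j_\ast^{-1}(U)$ is open in $\pi_1^{qtop}(Z)$ iff $\pi_Z^{-1}(j_\ast^{-1}(U))=\Omega(j)^{-1}(\pi_{Z'}^{-1}(U))$ is open in $\Omega(Z)$. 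So it suffices to produce, for each loop $\ell\in\Omega(Z')$ and each basic neighborhood $\mathcal V$ of $\ell$ in $\Omega(Z')$, a loop $\ell'\in\Omega(Z)$ homotopic (rel basepoint, in $Z'$) to $\ell$ together with a basic neighborhood $\mathcal W$ of $\ell'$ in $\Omega(Z)$ whose $\Omega(j)$-image meets every homotopy class that $\mathcal V$ meets — more precisely, such that $\pi_{Z'}(\Omega(j)(\mathcal W))\subseteq\pi_{Z'}(\mathcal V)$, or better that these saturated images agree up to the needed inclusion. The key geometric input is that $Z$ is a deformation retract of $Z'$ minus an interior point of the cell, so any loop in $Z'$ can be homotoped off the open cell, and neighborhoods can be pushed along this retraction; this lets us "pull back" open sets from $\Omega(Z')$ to $\Omega(Z)$ in a way compatible with the quotient maps.

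The main obstacle — and the reason this is a real lemma rather than a triviality — is controlling the compact-open neighborhoods under the homotopy that pushes a loop off the cell: a basic neighborhood $\bigcap\langle K_n^j,U_j\rangle$ in $\Omega(Z')$ does not restrict along the deformation retract to a basic neighborhood of the same complexity, and the intermediate loops during the homotopy wander through the attached cell. The careful bookkeeping here is exactly what is carried out in \cite{Bi02} (and in the alternative argument of \cite{Br10.1}), using that the attaching region is an $(n-1)$-sphere bounding a cell, so that any loop-segment entering the cell can be replaced by a homotopic segment in its boundary without leaving a prescribed open set, provided that open set is chosen to be of the form (open in $Z$) union (its preimage in the cell). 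Once this "absorbing neighborhood" construction is in place, a set that is open after pulling back to $\Omega(Z)$ is shown to have open preimage in $\Omega(Z')$, and combined with continuity of $j_\ast$ (automatic since $j_\ast$ is induced by the continuous $\Omega(j)$ and the quotient maps) this gives that $j_\ast$ is a quotient map.

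I would therefore present the proof by first reducing, via the two quotient-map squares, to a statement entirely about loop spaces and their compact-open topologies, then invoking the deformation retraction of $Z'\setminus\{pt\}$ onto $Z$ to define the relevant neighborhood transfer, and finally verifying the saturation compatibility with a direct (if slightly tedious) check on basic neighborhoods — citing \cite{Bi02,Br10.1} for the analogous computation rather than reproducing every estimate.
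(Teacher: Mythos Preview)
The paper does not prove this lemma: the sentence introducing it reads ``The following Lemma first appeared in \cite{Bi02}; an alternative proof appears in \cite{Br10.1}.'' Since your proposal ultimately defers to those same two references for the detailed neighborhood bookkeeping, at the level of what is actually presented you match the paper.

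That said, the formal reduction in your second paragraph is miscast and would not prove the lemma as stated. You write that it suffices, for each $\ell\in\Omega(Z')$ and each basic neighborhood $\mathcal V$ of $\ell$, to find $\ell'\in\Omega(Z)$ homotopic to $\ell$ and a neighborhood $\mathcal W$ of $\ell'$ ``whose $\Omega(j)$-image meets every homotopy class that $\mathcal V$ meets --- more precisely, such that $\pi_{Z'}(\Omega(j)(\mathcal W))\subseteq\pi_{Z'}(\mathcal V)$.'' These two descriptions are opposite inclusions, and with the quantifier order ``for all $\mathcal V$, there exists $\mathcal W$'' neither one yields the quotient property. What is actually needed is the reverse quantification: given $\ell$, there is $\ell'\in\Omega(Z)$ homotopic to $\ell$ in $Z'$ such that for \emph{every} open $\mathcal W\ni\ell'$ in $\Omega(Z)$ there exists an open $\mathcal V\ni\ell$ in $\Omega(Z')$ with $\pi_{Z'}(\mathcal V)\subseteq \pi_{Z'}(\Omega(j)(\mathcal W))$, i.e.\ every loop in $\mathcal V$ is homotopic in $Z'$ to the image of some loop in $\mathcal W$. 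From this one argues: if $j_{\ast}^{-1}(U)$ is open then $\ell'$ lies in the open set $\Omega(j)^{-1}(\pi_{Z'}^{-1}(U))$; taking $\mathcal W$ inside it produces $\mathcal V\subseteq \pi_{Z'}^{-1}(U)$, so $\pi_{Z'}^{-1}(U)$ is open. Your informal description in the third paragraph --- pushing loop-segments off the cell while remaining in prescribed open sets built from the deformation retraction $Z'\setminus\{pt\}\simeq Z$ --- is precisely the mechanism that establishes this corrected condition, and is what the cited proofs carry out.
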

The compactness of $S^1$ allows us to easily extend Lemma \ref{attachinglemma1} to an arbitrary number of cells.
\begin{lemma} \label{attachinglemma2}
Suppose $Z$ is a based space, $n\geq 2$ an integer, and $f_{\alpha}:S^{n-1}\ra Z$, $\alpha\in A$ is a family of based maps. Let $Z'=Z\sqcup_{f_{\alpha}}e_{\alpha}^{n}$ be the space obtained by attaching n-cells to $Z$ via the attaching maps $f_{\alpha}$. The inclusion $j:Z\hookrightarrow Z'$ induces a homomorphism $j_{\ast}:\pi_{1}^{qtop}(Z)\ra \pi_{1}^{qtop}(Z')$ which is also a topological quotient map.
\end{lemma}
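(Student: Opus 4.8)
The plan is to reduce the many-cell case to the single-cell case of Lemma \ref{attachinglemma1} by a colimit/compactness argument. First I would observe that $Z' = Z\sqcup_{f_\alpha}e_\alpha^n$ can be built as a (possibly transfinite) colimit of spaces obtained by attaching one cell at a time: enumerate $A$ by ordinals and set $Z_0 = Z$, $Z_{\lambda+1} = Z_\lambda \sqcup_{f_\lambda}e_\lambda^n$, and $Z_\mu = \varinjlim_{\lambda<\mu}Z_\lambda$ at limit ordinals, so that $Z' = \varinjlim_\lambda Z_\lambda$. Each successor step gives a quotient map $\pi_1^{qtop}(Z_\lambda)\ra\pi_1^{qtop}(Z_{\lambda+1})$ by Lemma \ref{attachinglemma1}, provided we know each $Z_\lambda$ is again a based space to which that lemma applies — which it is.

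Next I would deal with the topology on the loop spaces and path component spaces directly, since composing quotient maps transfinitely is not automatic. The cleanest route: show that $\Omega(Z)\ra\Omega(Z')$ (equivalently, the map $\pi:\Omega(Z)\ra\pi_1^{qtop}(Z')$) together with the quotient map $\Omega(Z')\ra\pi_1^{qtop}(Z')$ realizes $\pi_1^{qtop}(Z')$ as a quotient of $\pi_1^{qtop}(Z)$. Here is where the compactness of $S^1$ enters, as the excerpt hints: any loop $\gamma:S^1\ra Z'$ has compact image, and the image meets only finitely many of the open cells $e_\alpha^n$ (each $\gamma^{-1}(\text{center of }e_\alpha^n)$ is a point, and the set of such centers is closed and discrete in $Z'$), so $\gamma$ factors through $Z\sqcup_{f_{\alpha_1},\dots,f_{\alpha_k}}e_{\alpha_1}^n\cup\dots\cup e_{\alpha_k}^n$ for some finite subset. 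Thus $\Omega(Z') = \bigcup_F \Omega(Z_F)$ where $F$ ranges over finite subsets of $A$ and $Z_F$ is $Z$ with those finitely many cells attached; moreover a subset of $\Omega(Z')$ is open iff its intersection with each $\Omega(Z_F)$ is open (using that compact subsets of $S^1\times\Omega$-type test spaces land in some $Z_F$), i.e. $\Omega(Z')$ carries the colimit topology from the $\Omega(Z_F)$.

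From there I would finish as follows. By iterating Lemma \ref{attachinglemma1} finitely many times, $j_\ast:\pi_1^{qtop}(Z)\ra\pi_1^{qtop}(Z_F)$ is a quotient map for every finite $F$. Since $\pi_1^{qtop}(Z')$ carries the colimit topology over the finite stages $\pi_1^{qtop}(Z_F)$ (this follows from the colimit description of $\Omega(Z')$ above and the fact that $\pi_0^{qtop}$, being a quotient functor, commutes with these colimits), and since each composite $\pi_1^{qtop}(Z)\ra\pi_1^{qtop}(Z_F)\ra\pi_1^{qtop}(Z')$ is $j_\ast$ followed by a quotient map, a set $V\subseteq\pi_1^{qtop}(Z')$ is open iff $j_\ast^{-1}(V)$ is open in $\pi_1^{qtop}(Z)$. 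Surjectivity of $j_\ast$ is immediate since every loop in $Z'$ is homotopic into $Z$ (the cells are simply connected for $n\geq 2$, by cellular approximation), so $j_\ast$ is a topological quotient map.

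The main obstacle I anticipate is the bookkeeping in the second paragraph: carefully justifying that $\Omega(Z')$ — and hence $\pi_1^{qtop}(Z')$ — has the colimit topology with respect to the finite sub-attachments, rather than something finer or coarser. This is a point-set argument about the compact-open topology on mapping spaces out of the compact space $S^1$ into a colimit along closed cofibrations, and it requires knowing that the attaching maps yield closed embeddings $Z_F\hookrightarrow Z'$ and that $S^1$ (being compact) cannot "escape" finitely many cells. Everything else is a formal consequence of Lemma \ref{attachinglemma1} and the behavior of quotient maps under such colimits.
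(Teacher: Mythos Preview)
Your strategy of reducing to the finite-cell case via compactness of $S^1$ is correct and matches the paper's approach, but your implementation has a genuine gap. The claim that $\Omega(Z')$ carries the colimit topology from the subspaces $\Omega(Z_F)$ is not justified, and the conditions you list as sufficient---that the $Z_F\hookrightarrow Z'$ are closed cofibrations and that compact subsets of $Z'$ lie in some $Z_F$---do not imply it. Those facts give $\Omega(Z')=\bigcup_F\Omega(Z_F)$ as \emph{sets}, but the compact-open topology on $\Omega(Z')$ may be strictly coarser than the colimit topology. Concretely: a basic neighborhood $\langle K,U\rangle$ in $\Omega(Z')$ requires $U$ open in $Z'$, and since $Z_F$ is closed but \emph{not open} in $Z'$, an open $V\subseteq Z_F$ has no open extension to $Z'$ avoiding the remaining cells. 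So a neighborhood of $\beta$ in $\Omega(Z_F)$ cannot be promoted to one in $\Omega(Z')$ that stays inside your set $W$. This is not bookkeeping; it is the heart of the matter, and your subsequent claim that $\pi_0^{qtop}$ commutes with these colimits would also need its own argument.

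The paper avoids this entirely with a trick you are missing: replace the closed subspace $Z_F$ by an \emph{open} one. Given $\beta$ meeting only the cells indexed by $F$, remove one interior point $z_\alpha$ from each unused cell to form $Z_3=Z'\setminus\{z_\alpha:\alpha\notin F\}$. Now $Z_3$ is open in $Z'$, so $\Omega(Z_3)\hookrightarrow\Omega(Z')$ is an \emph{open embedding}; and $Z_F\hookrightarrow Z_3$ is a homotopy equivalence, so $\pi_1^{qtop}(Z_F)\cong\pi_1^{qtop}(Z_3)$. Chaining the finite-cell quotient $\pi_1^{qtop}(Z)\to\pi_1^{qtop}(Z_F)$ with this isomorphism and the open embedding produces, for each $\beta$, a genuine compact-open neighborhood in $\Omega(Z')$ contained in $\pi^{-1}(U)$. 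No global colimit description of $\Omega(Z')$ is ever asserted or needed.
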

\begin{proof} We re-label $Z=Z_1$ and $Z'=Z_4$ and take the approach of factoring the inclusion $j:Z_1\hookrightarrow Z_4$ twice as $Z_1\subseteq Z_2\subseteq Z_3\subseteq Z_4$. In general, $\pi_k:\Omega(Z_k)\ra \pi_{1}^{qtop}(Z_k)$ denotes the quotient map identifying homotopy classes of maps.\\

Consider the commutative diagram $$\xymatrix{ \Omega(Z_1) \ar[r]^{j_{\#}} \ar[d]_{\pi_1} & \Omega(Z_4) \ar[d]^{\pi_4} \\
\pi_{1}^{qtop}(Z_1) \ar[r]_{j_{\ast}} & \pi_{1}^{qtop}(Z_4) }$$and suppose $U\subseteq \pi_{1}^{qtop}(Z_4)$ such that $j_{\ast}^{-1}(U)$ is open in $\pi_{1}^{qtop}(Z_1)$. It suffices to show that $\pi_{4}^{-1}(U)$ is open in $\Omega(Z_4)$. Let $\beta\in \pi_{4}^{-1}(U)$. Since the image $\beta(S^1)$ is compact, it intersects only finitely many of the attached cells. Suppose $\alpha_1,...,\alpha_N$ are the indices in $A$ such that $\beta(S^1)\cap e^{n}_{\alpha_{i}}\neq \emptyset$. Let $Z_2=Z_1\sqcup_{\alpha_i} e^{n}_{\alpha_i}\subseteq Z_4$ be the subspace of $Z_4$ which is $Z_1$ with the cells $e^{n}_{\alpha_1},...e^{n}_{\alpha_N}$ attached. Additionally, for each $\alpha\in A-\{\alpha_1,...\alpha_N\}$, take a point $z_{\alpha}\in int(e^{n}_{\alpha})$ and let $Z_3=Z_4-\{z_{\alpha}|\alpha\in A-\{\alpha_1,...\alpha_N\}\}$ be the open subspace of $Z_4$ with the chosen interior points removed. We know from Lemma \ref{attachinglemma1} that the inclusion $j_1:Z_1\hookrightarrow Z_2$ induces a quotient map $(j_1)_{\ast}:\pi_{1}^{qtop}(Z_1)\ra \pi_{1}^{qtop}(Z_2)$ since $Z_2$ is obtained from $Z_1$ by attaching only finitely many n-cells. The inclusion $j_2:Z_2\hookrightarrow Z_3$ is a homotopy equivalence and therefore induces an isomorphism $(j_2)_{\ast}:\pi_{1}^{qtop}(Z_2)\ra \pi_{1}^{qtop}(Z_3)$ of quasitopological groups. Lastly, since $S^1$ is compact and $Z_3$ is open in $Z_4$, the inclusion $j_3:Z_3\hookrightarrow Z_4$ induces an open embedding $(j_3)_{\#}:\Omega(Z_3)\hookrightarrow \Omega(Z_4)$ on loop spaces. Overall, we have $j_3\circ j_2\circ j_1= j$ and that $(j_2\circ j_1)_{\ast}=(j_2)_{\ast}\circ(j_1)_{\ast}:\pi_{1}^{qtop}(Z_1)\ra \pi_{1}^{qtop}(Z_3)$ is a quotient map. The equality $$j_{\ast}^{-1}(U)=(j_2\circ j_1)_{\ast}^{-1}((j_3)_{\ast}^{-1}(U))$$then implies that $(j_3)_{\ast}^{-1}(U)$ is open in $\pi_{1}^{qtop}(Z_3)$. Therefore, $V=\pi_{3}^{-1}((j_3)_{\ast}^{-1}(U))=(j_3)_{\#}^{-1}(\pi_{4}^{-1}(U))$ is an open neighborhood of $\beta$ in $\Omega(Z_3)$. Since $(j_3)_{\#}:\Omega(Z_3)\hookrightarrow \Omega(Z_4)$ is an open embedding, $(j_3)_{\#}(V)$ is an open neighborhood of $\beta$ in $\Omega(Z_4)$. If $\gamma\in (j_3)_{\#}(V)$, then we have a loop $\gamma '\in V$ such that $j_3\circ \gamma '=\gamma$. But this means $[\gamma ']\in (j_3)_{\ast}^{-1}(U)$, so that $[\gamma ]=[j_3\circ \gamma ']\in U$ and consequently $\gamma \in \pi_{4}^{-1}(U)$. This gives the inclusion $(j_3)_{\#}(V)\subseteq \pi_{4}^{-1}(U)$ and that $\pi_{4}^{-1}(U)$ is open in $\Omega(Z_4)$.
\end{proof}
Since $\tau$ preserves quotient maps (Proposition \ref{discoftaugrp}), we have:
\begin{corollary}  \label{attachinglemma3}
Suppose $Z$ is a based space, $n\geq 2$ an integer, and $f_{\alpha}:S^{n-1}\ra Z$, $\alpha\in A$ is a family of based maps. Let $Z'=Z\sqcup_{f_{\alpha}}e_{\alpha}^{n}$ be the space obtained by attaching n-cells to $Z$ via the attaching maps $f_{\alpha}$. The inclusion $j:Z\hookrightarrow Z'$ induces a quotient map $j_{\ast}:\pi_{1}^{\tau}(Z)\ra \pi_{1}^{\tau}(Z')$ of topological groups.
\end{corollary}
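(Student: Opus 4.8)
The plan is to deduce this immediately by combining Lemma~\ref{attachinglemma2} with the second part of Proposition~\ref{discoftaugrp}. By Lemma~\ref{attachinglemma2}, the inclusion $j:Z\hookrightarrow Z'$ induces a continuous homomorphism $j_{\ast}:\pi_{1}^{qtop}(Z)\ra\pi_{1}^{qtop}(Z')$ which is simultaneously a topological quotient map. Regarded as a morphism in $\qtg$ (hence in $\gwt$), this $j_{\ast}$ is precisely a homomorphism of groups with topology whose underlying map of spaces is a quotient map, so Proposition~\ref{discoftaugrp}(2) applies to it directly.

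Next I would recall that $\pi_{1}^{\tau}=\tau\circ\pi_{1}^{qtop}$ and that $\tau$ is defined to be the identity on underlying groups and on homomorphisms. Consequently $\tau(j_{\ast})$ is literally the same group homomorphism $j_{\ast}:\pi_{1}(Z)\ra\pi_{1}(Z')$, now viewed between the topological groups $\pi_{1}^{\tau}(Z)$ and $\pi_{1}^{\tau}(Z')$. By Proposition~\ref{discoftaugrp}(2), $\tau$ preserves quotient maps, so $\tau(j_{\ast})=j_{\ast}:\pi_{1}^{\tau}(Z)\ra\pi_{1}^{\tau}(Z')$ is a topological quotient map, which is exactly the claimed statement.

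I expect essentially no obstacle here: the substantive work has already been carried out in Lemma~\ref{attachinglemma2} (whose proof handled the compactness argument reducing an arbitrary family of attached cells to finitely many via the factorization $Z_1\subseteq Z_2\subseteq Z_3\subseteq Z_4$), and in the general categorical fact, established in Proposition~\ref{discoftaugrp}, that the reflection $\tau$ sends quotient maps to quotient maps. The only thing to verify is the trivial bookkeeping that the arrow produced by Lemma~\ref{attachinglemma2} is of the type to which Proposition~\ref{discoftaugrp}(2) applies, and this is immediate from the definitions of $\qtg$, $\gwt$, and $\tau$.
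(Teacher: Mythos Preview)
Your proposal is correct and follows exactly the same route as the paper: the paper simply notes that since $\tau$ preserves quotient maps (Proposition~\ref{discoftaugrp}), Corollary~\ref{attachinglemma3} follows immediately from Lemma~\ref{attachinglemma2}. Your additional remarks about $\tau$ being the identity on underlying homomorphisms are accurate bookkeeping and do not depart from the paper's argument.
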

We use this Corollary and the realization of free topological groups as fundamental groups (Theorem \ref{theoremfreetopologicalgroups}) to construct, for any topological group $G$, a space $Z$ whose fundamental group is isomorphic to $G$.
\begin{theorem} \label{realizingtopgrps}
Every topological group $G$ is isomorphic to the fundamental group $\pi_{1}^{\tau}(Z)$ of a space $Z$ obtained by attaching 2-cells to a generalized wedge of circles $\sus$. Moreover, one may continue to attach cells of dimension $> 2$ to obtain a space $Z'$ such that $\pi_{1}^{\tau}(Z')\cong G$ and $\pi_{n}^{\tau}(Z')=0$ for $n> 1$.
\end{theorem}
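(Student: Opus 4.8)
The plan is to mimic the classical construction realizing a group as a fundamental group, but carried out in the topological category using the tools assembled in this section. Given a topological group $G$, first I would regard $G$ as an (unbased) space and form the generalized wedge of circles $\Sigma(G_{+})$. By Theorem \ref{theoremfreetopologicalgroups}, $\pi_{1}^{\tau}(\Sigma(G_{+}))\cong F_{M}(\pi_{0}^{qtop}(G))$; the relevant subtlety here is the path component space. If $G$ is not path connected one must either pass to the component of the identity or, more cleanly, replace $G$ by a path connected space with the same ``component topology'', or simply note that the canonical map $G\to\pi_{0}^{qtop}(G)$ composed appropriately yields a free topological group on $G$ when $G$ is totally path disconnected; in general I would feed in a space $X$ with $\pi_{0}^{qtop}(X)\cong G$ as a space (for instance $X=G$ with the topology pushed through, or a suitable disjoint-union model) so that $\pi_{1}^{\tau}(\Sigma(X_{+}))\cong F_{M}(G)$. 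The upshot is a space $W$ with $\pi_{1}^{\tau}(W)\cong F_{M}(G)$ together with the universal continuous map $\sigma\colon G\to F_{M}(G)$.

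Next I would kill the kernel of the canonical projection. The multiplication homomorphism $m_{G}\colon F_{M}(G)\to G$ (from the definition of $\tau$, or rather the universal property of $F_{M}$ applied to $\mathrm{id}\colon G\to G$) is a continuous surjection whose kernel $N$ is a normal subgroup. Choosing a set of generators of $N$ as a normal subgroup — e.g. all elements of $N$ — and representing each by a loop in $W$, attach a $2$-cell along each such loop to obtain $Z=W\sqcup_{f_{\alpha}}e_{\alpha}^{2}$. By Corollary \ref{attachinglemma3}, the inclusion $W\hookrightarrow Z$ induces a topological quotient map $\pi_{1}^{\tau}(W)\to\pi_{1}^{\tau}(Z)$, and by the classical van Kampen-type computation for cell attachments the underlying group of $\pi_{1}^{\tau}(Z)$ is $F(G)/N=G$. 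The point to verify carefully is that the quotient topology on $F_{M}(G)/N$ coming from this quotient map agrees with the given topology on $G$: one inclusion follows because the composite $F_{M}(G)\to G$ is continuous and factors through the quotient, giving a continuous bijection $\pi_{1}^{\tau}(Z)\to G$; the reverse follows because $m_{G}$ is a quotient map (indeed $\tau(G)=G$ since $G$ is already a topological group, by part 3 of Proposition \ref{discoftaugrp}, so $m_{G}\colon F_{M}(G)\to G$ is quotient), and the quotient of a quotient is a quotient.

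For the second sentence, having built $Z$ with $\pi_{1}^{\tau}(Z)\cong G$, I would inductively attach cells of dimension $n\geq 3$ to trivialize the higher quasitopological homotopy groups while leaving $\pi_{1}^{\tau}$ unchanged. Concretely, for $n=3$ attach $3$-cells along a generating family of maps $S^{2}\to Z$ representing elements of $\pi_{2}(Z)$ (or, to be safe with the topology, along enough maps to kill $\pi_{n}^{qtop}$); Lemma \ref{attachinglemma2} and Corollary \ref{attachinglemma3} guarantee that attaching cells of dimension $\geq 3$ does not alter $\pi_{1}^{qtop}$ or $\pi_{1}^{\tau}$ at all, since the relevant $j_{\ast}$ on $\pi_{1}$ is an isomorphism in that range (a map $S^{1}\to Z'$ is homotopic into the lower skeleton and homotopies of loops likewise push down, by the standard cellular/compactness argument that $\Omega(Z_{3})\hookrightarrow\Omega(Z_{4})$ in the proof of Lemma \ref{attachinglemma2} becomes an isomorphism on $\pi_{0}$ when the attached cells have dimension $\geq 3$). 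Iterating transfinitely over $n$ produces $Z'$ with $\pi_{1}^{\tau}(Z')\cong G$ and $\pi_{n}^{\tau}(Z')=0$ for all $n>1$.

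The main obstacle I anticipate is the topological bookkeeping in the first two paragraphs: one must ensure the input space to Theorem \ref{theoremfreetopologicalgroups} has path component space (isomorphic to) $G$ as a topological space, and then confirm that the composite of quotient maps $F_{M}(G)\twoheadrightarrow\pi_{1}^{\tau}(W)\twoheadrightarrow\pi_{1}^{\tau}(Z)$ realizes precisely the quotient $F_{M}(G)\to G=\tau(G)$, so that the resulting continuous bijection $\pi_{1}^{\tau}(Z)\to G$ is a homeomorphism. Once the identification $\pi_{1}^{\tau}(Z)=\tau(G)=G$ is nailed down, the higher-cell-attachment step is routine given Lemma \ref{attachinglemma2}.
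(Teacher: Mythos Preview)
Your approach is essentially identical to the paper's: realize $F_{M}(G)$ as $\pi_{1}^{\tau}$ of a generalized wedge via Theorem~\ref{theoremfreetopologicalgroups}, attach 2-cells along the kernel of $m_{G}$, invoke Corollary~\ref{attachinglemma3} to get a quotient map on $\pi_{1}^{\tau}$, and use $\tau(G)=G$ to identify the resulting quotient with $G$; then kill higher homotopy groups cell by cell. The one obstacle you correctly flag---producing a space $X$ with $\pi_{0}^{qtop}(X)\cong G$ as spaces for an \emph{arbitrary} topological group $G$---is handled in the paper by invoking a theorem of Harris \cite{Har80} stating that every topological space is the path component space of some (paracompact Hausdorff) space; your suggestion of taking $X=G$ only works when $G$ is totally path disconnected (which the paper also notes as a special case), so the general step genuinely requires this external result rather than an ad hoc ``disjoint-union model.''
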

\begin{proof} Suppose $G$ is a topological group. According to the main result of \cite{Har80}, there is a (paracompact Hausdorff) space $X$ such that $\pi_{0}^{qtop}(X)$ is homeomorphic to the underlying space of $G$. The construction of $X$ is functorial and the homeomorphism $\pi_{0}^{qtop}(X)\cong G$ is natural, however, $X$ does not seem to inherit any natural algebraic structure. In the case that $G$ is totally path disconnected, take $X=G$. This gives natural isomorphisms \[h_G:\pitsus\cong F_{M}(\piztop)\cong F_{M}(G).\]
\indent The identity $G\ra G$ induces the quotient map $m_G:F_{M}(G)\ra G$ so that $m_G\circ h_G:\pitsus\cong F_{M}(G)\ra G$ is a quotient map of topological groups. For each $g\in G\cong \pi_{0}^{qtop}(X)$ fix a point $x_g\in X$ such that the path component of $x_g$ corresponds to $g$. Recall that $u_{x_g}:I\ra \sus$ is the loop $u_{x_g}(t)=x_g\wedge t$ and the set of homotopy classes $\{[u_{x_g}]|g\in G\}$ freely generate $\pi_{1}(\sus)\cong F(G)$. For each $\alpha\in \ker (m_G\circ h_G)$ choose a representative loop 
$f_{\alpha}=u_{x_{g_1}}\ast u_{x_{g_2}}\ast\dots \ast u_{x_{g_n}}:I\ra \sus$ and attach a 2-cell to $\sus$ via $f_{\alpha}$. If $Z=\sus\sqcup_{f_{\alpha}} e^{2}_{\alpha}$ is the resulting space, Corollary \ref{attachinglemma3} gives that the inclusion $j:\sus\hookrightarrow Z$ induces a quotient map $j_{\ast}:\pitsus \ra \pi_{1}^{\tau}(Z)$. Since $\ker (m_G\circ h_G) =\ker j_{\ast}$ and both $m_G\circ h_G$ and $j_{\ast}$ are quotient, $\pi_{1}^{\tau}(Z)\cong G$ as topological groups.\\
\indent The second statement of the Theorem follows by the usual process of inductively killing the n-th homotopy group by attaching cells of dimension $n+1$. The fact that the inclusion at each step induce group isomorphisms on the fundamental group (which are topological quotients by Lemma \ref{attachinglemma2} and therefore homeomorphisms) means the direct limit space $Z'$ will satisfy $\pi_{1}^{\tau}(Z')\cong G$ and $\pi_{n}^{\tau}(Z')=0$ for all $n> 1$.
\end{proof}
In the construction of $Z'$ in the previous theorem one will notice that $Z'$ is a CW-complex (and therefore a proper $K(G,1)$) if and only if $X=G$ is a discrete group. This theorem also permits the odd phenomenon of taking non-trivial fundamental groups of fundamental groups. For instance, there is a space $X$ such that $\pitx\cong S^1$ and thus $\pi_{1}^{\tau}(\pitx)\cong \mathbb{Z}$.
\subsection{A topological van Kampen theorem}
In this section, we prove a computational result analogous to the classical Seifert-van Kampen Theorem for fundamental groups. The many variations of the van Kampen theorem are also likely to have topological analogues. Consider the following general statement.
\begin{statement} \label{generalvankamp}
Let $(X,x_0)$ be a based space and $\{U_1,U_2,U_1\cap U_2\}$ be an open cover of $X$ consisting of path connected neighborhoods each containing $x_0$. The diagram $$\xymatrix{ \pi_{1}^{\tau}(U_1\cap U_2) \ar[r] \ar[d] & \pi_{1}^{\tau}(U_1) \ar[d] \\ \pi_{1}^{\tau}(U_2) \ar[r] & \pitx }$$induced by inclusions is a pushout in the category of topological groups. 
\end{statement}
Unfortunately, this statement does not hold in full generality.
\begin{example} \label{vankampex} \emph{
Let $X=\{1,2,...\}\cup \{\infty\}$ be the one-point compactification of the discrete space of natural numbers and $\sus$ be the generalized wedge of Example \ref{exampleconvergingcircles}. A basic open neighborhood of a point $x\wedge t\in \sus-\{x_0\}$ is $U\wedge (a,b)=\{u\wedge s|u\in U,s\in S\}$ where $U$ is an open neighborhood of $x$ in $X$ and $t\in (a,b)\subseteq (0,1)$. The contractible subspaces $X\wedge ([0,\epsilon)\cup (1-\epsilon,1])$, $\epsilon<\frac{1}{2}$ form a neighborhood base at the canonical basepoint $x_0$. We construct a space $Y$ by attaching 1-cells to $\sus$. For each $x\in X$, let $f_x:S^0\ra \sus$ be the map given by $f_{x}(-1)=x_0$ and $f_{x}(1)=x\wedge \frac{1}{2}$. Let $Y=\sus\sqcup_{f_x}e^{1}_{x}$ be the space obtained by attaching a copy of the interval $e_{x}^{1}=[-1,1]$ for each $x$ via the attaching map $f_x$.
\begin{figure}[H] \centering \includegraphics[height=3.8cm]{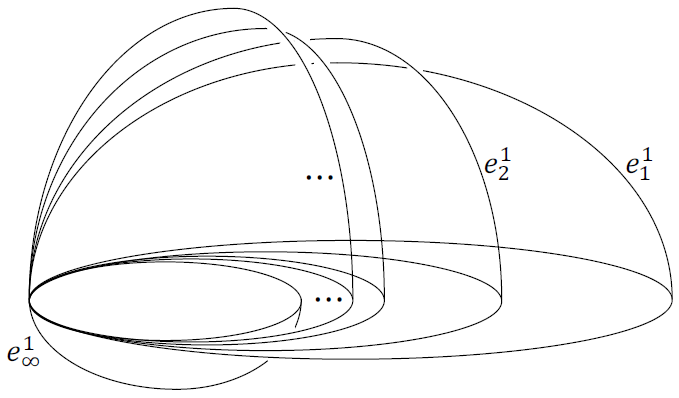} 
\caption{$Y=\sus\sqcup_{f_x}e^{1}_{x}$}
\end{figure}
Note that any open neighborhood of the loop $\alpha:I\ra \sus\subset Y$, $\alpha(t)=\infty\wedge t$ contains loops which are not homotopic to $\alpha$. Thus $\pity$ is not discrete by Proposition \ref{discretenessgeneral}. Define an open cover of $Y$ by letting $$U_1=\left(X\wedge \left(\left[0,\frac{1}{6}\right)\cup \left(\frac{2}{6},1\right]\right)\right)\cup \bigcup_{x\in X}e^{1}_{x}\text{  and  }U_2=\left(X\wedge \left(\left(\frac{5}{6},1\right]\cup \left[0,\frac{4}{6}\right)\right)\right)\cup \bigcup_{x\in X}e^{1}_{x}$$Note that $U_1\cong U_2$.
\begin{figure}[H] \label{openone} \centering \includegraphics[height=4cm]{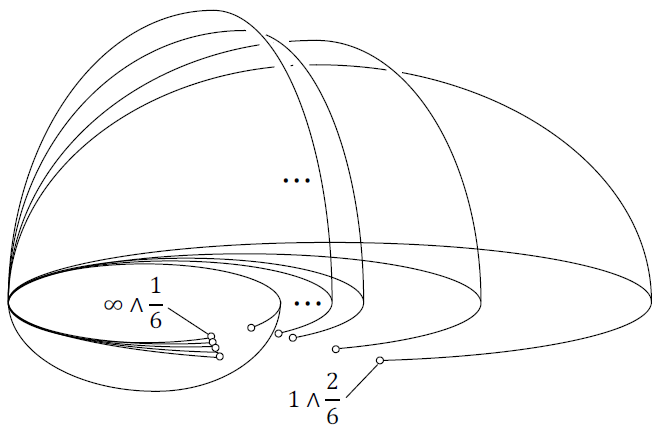} \caption{The open set $U_1\subset Y$}\end{figure}
Collapsing the set $\sus\cap U_1$ to a point gives map $U_1\ra \bigvee_{X}S^1$ to a countable wedge of circles which induces an isomorphism $\pi_{1}^{\tau}(U_1)\ra \pi_{1}^{\tau}\left(\bigvee_{X}S^1\right)$ of discrete topological groups. Consequently, $\pi_{1}^{\tau}(U_1)\cong \pi_{1}^{\tau}(U_2)$ is the discrete free group on countably many generators. Note that $$U_1\cap U_2=\left(X\wedge \left[0,\frac{1}{6}\right)\cup \left(\frac{2}{6},\frac{4}{6}\right)\cup \left(\frac{5}{6},1\right]\right)\cup \bigcup_{x\in X}e^{1}_{x}$$
\begin{figure}[H] \centering \includegraphics[height=3.8cm]{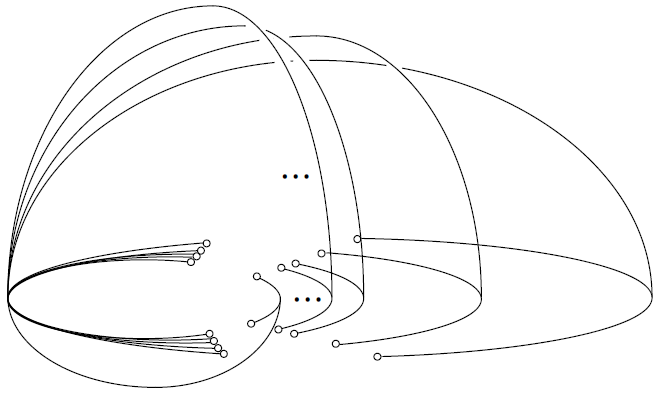} \caption{$U_1\cap U_2$}\end{figure}
Clearly $\pi_{1}(U_1\cap U_2)=1$. If the square$$\xymatrix{ \pi_{1}^{\tau}(U_1\cap U_2) \ar[r] \ar[d] & \pi_{1}^{\tau}(U_1) \ar[d] \\ \pi_{1}^{\tau}(U_2) \ar[r] & \pity }$$is a pushout in the category of topological groups, then $\pity$ is the free topological product of two discrete groups and must also be discrete. This contradiction indicates that Statement \ref{generalvankamp} cannot be true in full generality.}
\end{example}
The complication arising in the previous example motivates the following definition.
\begin{definition} \label{lwe} \emph{A path $p:I\ra X$ is \textit{well-ended} if for every open neighborhood $\mathcal{U}$ of $p$ in $\px$ there are open neighborhoods $V_0,V_1$ of $p(0),p(1)$ in $X$ respectively such that for every $a\in V_0, b\in V_1$ there is a path $q\in \mathcal{U}$ with $q(0)=a$ and $q(1)=b$. A space $X$ is \textit{wep-connected} if for every $x,y\in X$, there is a well-ended path from $x$ to $y$.}
\end{definition}
\begin{remark}
\emph{Since a basis for the topology of $\px$ is given by neighborhoods of the form $\bigcap_{j=1}^{n}\langle K_{n}^{j},U_j\rangle$ where $U_j$ is open in $X$, we have: A path $\alpha\in \px$ is well-ended if and only if for each neighborhood $\bigcap_{j=1}^{n}\langle K_{n}^{j},U_j\rangle$ of $\alpha$, there are open neighborhoods $V_0\subseteq U_1,V_1\subseteq U_n$ of $\alpha(0),\alpha(1)$ in $X$ respectively such that for every $a\in V_0, b\in V_1$ there is a path $\beta\in \bigcap_{j=1}^{n}\langle K_{n}^{j},U_j\rangle$ with $\beta(0)=a$ and $\beta(1)=b$. It is instructive to observe this in the following figure.}
\end{remark}
\begin{figure}[H] \label{fig1}  \centering \includegraphics[height=3cm]{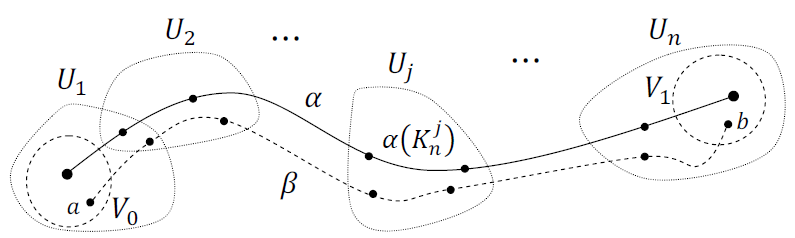} \caption{A well-ended path $\alpha$.} \end{figure}
Well ended-paths play an important role in the theory of semicoverings \cite{Brsemi}. We avoid an involved discussion of wep-connected spaces by restricting to what is necessary for Theorem \ref{vankampentheorem}. We first note that wep-connectedness is a natural generalization of local path connectedness.
\begin{proposition}  \label{lpcimplieslwepc}
If $p:I\ra X$ is a path and $X$ is locally path connected at $p(0)$ and $p(1)$, then $p$ is well-ended. Consequently, all path connected, locally path connected spaces are wep-connected.
\end{proposition}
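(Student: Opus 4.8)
The plan is to show directly that any path $p:I\to X$ is well-ended as soon as $X$ is locally path connected at the two endpoints $p(0)$ and $p(1)$; the ``consequently'' clause is then immediate, since in a path connected, locally path connected space we can join any two points by a path and that path will be well-ended by the first assertion. So fix a path $p$ and a basic open neighborhood $\bigcap_{j=1}^{n}\langle K_n^j,U_j\rangle$ of $p$ in $\px$, with each $U_j$ open in $X$ (using the convenient basis from Section 2). I want to produce open neighborhoods $V_0\subseteq U_1$ of $p(0)$ and $V_1\subseteq U_n$ of $p(1)$ so that every pair of points $a\in V_0$, $b\in V_1$ is joined by a path inside $\bigcap_{j=1}^{n}\langle K_n^j,U_j\rangle$.

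The key step is a local modification of $p$ near each endpoint. First shrink: since $p(0)\in U_1$ and $X$ is locally path connected at $p(0)$, choose a path connected open neighborhood $V_0$ of $p(0)$ with $V_0\subseteq U_1$; symmetrically choose a path connected open $V_1\subseteq U_n$ of $p(1)$. Now given $a\in V_0$ and $b\in V_1$, pick a path $\lambda_a$ in $V_0$ from $a$ to $p(0)$ and a path $\lambda_b$ in $V_1$ from $p(1)$ to $b$. Reparametrize the concatenation: let $\beta$ be the path that traverses $\lambda_a$ on $[0,\tfrac{1}{2n}]$, then a rescaled copy of $p$ on $[\tfrac{1}{2n},1-\tfrac{1}{2n}]$, then $\lambda_b$ on $[1-\tfrac{1}{2n},1]$. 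Equivalently, using the restricted-path notation of Section 2, $\beta$ agrees with $p$ after a linear reparametrization of the domain that compresses $I$ into the middle subinterval and inserts the short connecting arcs at the two ends. The point is that $\beta$ still spends its time in the required sets: on the $j$-th subinterval $K_n^j$ for $2\le j\le n-1$, $\beta$ runs through (a portion of) the image $p(K_n^{j-1}\cup K_n^j\cup K_n^{j+1})$-type neighborhood — and here I need to be a bit careful about how the reparametrization interacts with the partition.

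The main obstacle is precisely this bookkeeping: the naive ``squeeze $p$ into the middle'' reparametrization does not respect the partition $K_n^1,\dots,K_n^n$, so one must either (a) refine the partition — replace $n$ by a larger $N$ and use the second part of Lemma 2.1 to rewrite $\bigcap_{j=1}^{n}\langle K_n^j,U_j\rangle$ as an intersection of conditions over a finer partition, then absorb the connecting arcs into the first and last blocks of the refinement — or (b) choose the connecting arcs short enough and absorb them into $\langle K_n^1,U_1\rangle$ and $\langle K_n^n,U_n\rangle$ by noting $\lambda_a(I)\subseteq V_0\subseteq U_1$ and $\lambda_b(I)\subseteq V_1\subseteq U_n$, while reparametrizing $p$ itself by a piecewise-linear homeomorphism of $I$ that maps each $K_n^j$ into itself for $2\le j\le n-1$ and shrinks $K_n^1$, $K_n^n$ only slightly. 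Approach (b) is cleaner: take a piecewise-linear increasing homeomorphism $\phi:I\to I$ fixing $\tfrac{j}{n}$ for $1\le j\le n-1$ and mapping $[0,\tfrac{1}{2n}]\mapsto\{0\}$-side and $[1-\tfrac{1}{2n},1]\mapsto\{1\}$-side appropriately, so that on each $K_n^j$ the reparametrized path $p\circ\phi$ still has image in $U_j$ (for $j=1$ it lands in $U_1$, for $j=n$ in $U_n$), and then glue $\lambda_a$, $\lambda_b$ onto the ends. Checking $\beta(K_n^j)\subseteq U_j$ for every $j$ then verifies $\beta\in\bigcap_{j=1}^n\langle K_n^j,U_j\rangle$ with $\beta(0)=a$, $\beta(1)=b$, which is exactly what well-endedness requires; the reduction of general neighborhoods to the basic ones is automatic since it suffices to check the condition on a basis, as noted in the remark following Definition 2.6.
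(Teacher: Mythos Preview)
Your plan is correct and coincides with the paper's: choose path connected $V_0\subseteq U_1$ and $V_1\subseteq U_n$, take connecting arcs $\lambda_a$ in $V_0$ and $\lambda_b$ in $V_1$, and modify $p$ only on the first and last partition blocks while leaving $p$ untouched on $\left[\tfrac{1}{n},\tfrac{n-1}{n}\right]$. The paper writes the resulting path explicitly as $q_{K_{2n}^{1}}=\lambda_a$, $q_{K_{2n}^{2}}=p_{K_{n}^{1}}$, $q_{\left[\frac{1}{n},\frac{n-1}{n}\right]}=p_{\left[\frac{1}{n},\frac{n-1}{n}\right]}$, $q_{K_{2n}^{2n-1}}=p_{K_{n}^{n}}$, $q_{K_{2n}^{2n}}=\lambda_b$, which is exactly your approach (b) once one wording slip is repaired: $\phi$ cannot literally be a \emph{homeomorphism} of $I$ and still make the splice with $\lambda_a$ continuous at $\tfrac{1}{2n}$ (that would force $\phi(\tfrac{1}{2n})=0=\phi(0)$); take instead the non-decreasing piecewise-linear surjection that is constant $0$ on $\left[0,\tfrac{1}{2n}\right]$, constant $1$ on $\left[1-\tfrac{1}{2n},1\right]$, and fixes each $\tfrac{j}{n}$, or simply define $q$ in pieces as the paper does.
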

\begin{proof}
Suppose $\mathcal{U}=\bigcap_{j=1}^{n}\langle K_{n}^{j},U_j\rangle$ is a basic open neighborhood of $p$ in $\px$. Find a path connected neighborhood $V_0,V_1$ of $p(0),p(1)$ respectively such that $V_0\subseteq U_1$ and $V_1\subseteq U_n$. For points $a\in V_0,b\in V_1$, take paths $\alpha:I\ra V_0$ from $a$ to $p(0)$ and $\beta:I\ra V_1$ from $p(1)$ to $b$. Now define a path $q\in \mathcal{U}$ by setting $$q_{K_{2n}^{1}}=\alpha, q_{K_{2n}^{2}}=p_{K_{n}^{1}}, q_{\left[\frac{1}{n},\frac{n-1}{n}\right]}=p_{\left[\frac{1}{n},\frac{n-1}{n}\right]}\text{ ,  }q_{K_{2n}^{2n-1}}=p_{K_{n}^{n-1}}\text{ ,  and  }q_{K_{2n}^{2n}}=\beta$$Clearly $q$ is a path in $\mathcal{U}$ from $a$ to $b$.
\end{proof}
A straightforward partial converse to Proposition \ref{lpcimplieslwepc} is that a space $X$ is locally path connected if and only if every constant path $I\ra X$ is well-ended. Many non-locally path connected spaces are wep-connected. 
\begin{example}
\emph{
It is an easy exercise to verify that for every space $X$, the generalized wedge $\sus$ is wep-connected; the canonical choice of well-ended paths $I\ra \sus$ are those which have image in a circle $x\wedge I\cong S^1$ for some $x\in X$. For instance, the generalized wedge in Example \ref{exampleconvergingcircles} is wep-connected but is not locally path connected. More generally, spaces, such as $Z$ and $Z'$ in Theorem \ref{realizingtopgrps}, obtained by attaching cells to a generalized wedge are wep-connected but are not typically locally path connected.
}
\end{example}
It is worthwhile to observe two examples of space which are not wep-connected.
\begin{example} \label{zeemansexample} \emph{
Zeeman's example \cite[6.6.14]{HW60} illustrated below is not wep-connected since there are no well-ended paths starting or ending at the two points where the space fails to be locally path connected.}
\end{example}
\begin{figure}[H]  \label{fig6}  \centering \includegraphics[height=3cm]{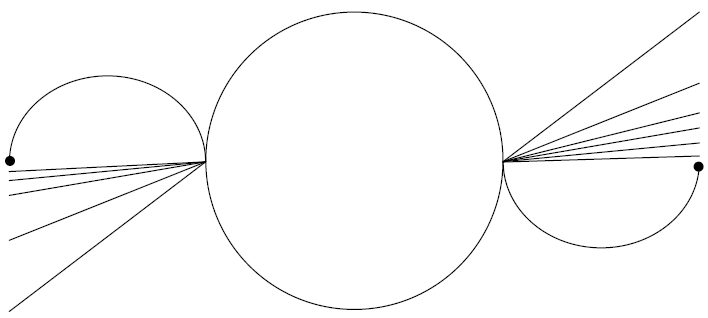} \caption{Zeeman's example} \end{figure} 
\begin{example} \emph{
In Example \ref{vankampex}, the space $Y$ is wep-connected, however, the intersection $U_1\cap U_2$ is not. It turns out that this is precisely why Statement \ref{generalvankamp} fails to hold for the given choice of $U_1$ and $U_2$. Additionally, this situation illustrates that a path connected, open subspace of a wep-connected space need not be wep-connected.
}
\end{example}
\begin{remark} \label{lwecodomain} \emph{
In Definition \ref{lwe}, it is necessary to specify the codomain $X$ since it may occur that a path $p:I\ra A$ in a subspace $A\subseteq X$ is well-ended whereas $p:I\ra A\hookrightarrow X$ is not. This complication does not arise when $A$ is an open subset of $X$ since, whenever $A$ is open, $P(A)$ is an open subspace of $\px$.
}
\end{remark}
\begin{proposition} \label{lweinopensubspace}
If $A$ is open in $X$ and $p:I\ra A$ is a path, then $p:I\ra A$ is well-ended if and only if $p:I\ra A\hookrightarrow X$ is well-ended.
\end{proposition}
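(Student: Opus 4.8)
The plan is to reduce everything to the single topological fact recorded in Remark \ref{lwecodomain}: when $A$ is open in $X$, the path space $P(A)$ is an open subspace of $\px$ \emph{and} carries the subspace topology. I would first make this precise. Since $I$ is compact and $A$ is open in $X$, the set $\langle I,A\rangle=\{q\in\px\mid q(I)\subseteq A\}=P(A)$ is a basic open subset of $\px$. Moreover, a subbasic neighbourhood $\langle K,W\rangle$ of a path in $P(A)$, with $W$ open in $A$ and hence open in $X$, is exactly the trace on $P(A)$ of the open set $\langle K,W\rangle$ of $\px$; and conversely $\langle K,U\rangle\cap P(A)=\langle K,U\cap A\rangle$. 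Thus the compact--open topology of $P(A)$ coincides with the subspace topology inherited from $\px$, and $P(A)$ is open in $\px$. In particular, a subset $\mathcal{V}\subseteq P(A)$ is open in $P(A)$ if and only if it is open in $\px$, and any open neighbourhood of $p$ in $\px$ meets $P(A)$ in an open neighbourhood of $p$ in $P(A)$.

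For the forward direction, assume $p:I\ra A$ is well-ended and let $\mathcal{U}$ be an open neighbourhood of $p$ in $\px$. Then $\mathcal{U}\cap P(A)$ is an open neighbourhood of $p$ in $P(A)$, so there are open sets $V_0,V_1$ in $A$ containing $p(0),p(1)$ such that every pair $a\in V_0$, $b\in V_1$ is joined by a path $q\in\mathcal{U}\cap P(A)$. Since $A$ is open in $X$, $V_0$ and $V_1$ are open in $X$ as well; and $q\in\mathcal{U}\cap P(A)$ is in particular a path lying in $\mathcal{U}\subseteq\px$ with $q(0)=a$, $q(1)=b$. Hence $p:I\ra A\hookrightarrow X$ is well-ended.

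For the converse, assume $p:I\ra A\hookrightarrow X$ is well-ended and let $\mathcal{V}$ be an open neighbourhood of $p$ in $P(A)$. By the first paragraph, $\mathcal{V}$ is also open in $\px$, hence an open neighbourhood of $p$ there, so there are open sets $W_0,W_1$ in $X$ containing $p(0),p(1)$ such that every $a\in W_0$, $b\in W_1$ is joined by a path $q\in\mathcal{V}$. Put $V_0=W_0\cap A$ and $V_1=W_1\cap A$; these are open in $A$ and contain $p(0),p(1)$. For $a\in V_0$, $b\in V_1$ the hypothesis supplies $q\in\mathcal{V}\subseteq P(A)$ with $q(0)=a$, $q(1)=b$, and such a $q$ is automatically a path in $A$. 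Hence $p:I\ra A$ is well-ended.

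Since the argument is pure unwinding of the definition once the open-subspace fact is in hand, I do not anticipate a genuine obstacle. The only points requiring a moment's care are confirming that the compact--open topology on $P(A)$ really is the subspace topology from $\px$ --- this fails for general subspaces $A$ but holds precisely because $A$ is open --- and remembering, in the converse direction, to intersect the neighbourhoods produced by the $X$-version of the definition with $A$.
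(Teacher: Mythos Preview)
Your argument is correct and is exactly the elaboration the paper intends: the paper gives no separate proof of this proposition, treating it as immediate from Remark \ref{lwecodomain} (that $P(A)$ is an open subspace of $\px$ when $A$ is open). One small inaccuracy in your closing commentary: the compact--open topology on $P(A)$ agrees with the subspace topology from $\px$ for \emph{any} subspace $A\subseteq X$, not just open ones; what genuinely requires $A$ to be open is that $P(A)$ be \emph{open} in $\px$ (used in your converse direction) and that neighbourhoods $V_0,V_1$ open in $A$ be open in $X$ (used in your forward direction).
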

The following lemma is useful since the proof of the van Kampen theorem requires a fixed basepoint.
\begin{lemma} \label{lemmalwelwt} Fix any $x_0\in X$. Then $X$ is wep-connected if and only if for each $x\in X$ there is a path $p:I\ra X$ from $x_0$ to $x$ such that for every open neighborhood $\mathcal{U}$ of $p$ in $P(X,x_0)$, there is an open neighborhood $V$ of $x$ such that for each $v\in V$, there is a path $q\in \mathcal{U}$ from $x_0$ to $v$.
\end{lemma}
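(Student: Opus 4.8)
The statement is an equivalence, so the plan is to prove the two implications separately, with the forward direction being essentially a direct unwinding of definitions and the reverse direction requiring a concatenation argument. The one subtlety throughout is that ``well-ended'' is a statement about neighborhoods in the \emph{free} path space $\px$, while the condition in the lemma concerns neighborhoods in the \emph{based} path space $\pxo=P(X,x_0)$, so I must keep careful track of which restricted/induced neighborhoods live where; Proposition~\ref{lweinopensubspace} and Remark~\ref{lwecodomain} warn that this distinction matters, but here $P(X,x_0)$ is a \emph{closed} subspace of $\px$, and what I actually need is only that every basic open neighborhood of a based path $p$ in $\pxo$ arises as $\mathcal{U}\cap \pxo$ for a basic open neighborhood $\mathcal{U}$ of $p$ in $\px$, which is immediate from the definition of the subspace topology and the chosen basis of ``instruction'' neighborhoods.

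\textbf{Reverse direction ($\Leftarrow$).} Assume the stated condition: for each $x$ there is a path $p$ from $x_0$ to $x$ with the one-sided endpoint-stability property at $x$. I must produce, for arbitrary $x,y\in X$, a well-ended path from $x$ to $y$. Pick the given paths $p_x$ (from $x_0$ to $x$) and $p_y$ (from $x_0$ to $y$), and set $r=p_x^{-1}\ast p_y$, a path from $x$ to $y$. Given a basic neighborhood $\mathcal{U}=\bigcap_{j=1}^{2n}\langle K_{2n}^j,U_j\rangle$ of $r$ in $\px$, the first-half instructions $\langle K_{2n}^1,U_1\rangle,\dots,\langle K_{2n}^n,U_n\rangle$ describe (via the reversing homeomorphism $\alpha\mapsto\alpha^{-1}$, which is a self-homeomorphism of $\px$ carrying $\pxo=P(X,x_0)$ to $P(X,x_0)$) a neighborhood of $p_x$ in $\pxo$; apply the hypothesis at $x$ to get a neighborhood $V_0$ of $x$ so that every $a\in V_0$ is the non-$x_0$ endpoint of a path obeying those instructions. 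Similarly the second-half instructions give a neighborhood of $p_y$ in $\pxo$, and the hypothesis at $y$ yields a neighborhood $V_1$ of $y$. For $a\in V_0$, $b\in V_1$ choose paths $q_a$ (from $a$ to $x_0$, obeying the first-half instructions) and $q_b$ (from $x_0$ to $b$, obeying the second-half instructions); then $q_a\ast q_b\in\mathcal{U}$ runs from $a$ to $b$, so $r$ is well-ended. Hence $X$ is wep-connected.

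\textbf{Forward direction ($\Rightarrow$).} Assume $X$ is wep-connected and fix $x_0$. Given $x$, wep-connectedness supplies a well-ended path $w:I\to X$ from $x_0$ to $x$; I claim $w$ already satisfies the (stronger-sounding, but here weaker) requirement. Let $\mathcal{U}$ be an open neighborhood of $w$ in $\pxo$; extend it to a basic open neighborhood $\mathcal{U}'$ of $w$ in $\px$ with $\mathcal{U}'\cap\pxo\subseteq\mathcal{U}$. Well-endedness of $w$ gives neighborhoods $V_0$ of $x_0$ and $V_1$ of $x$ such that every $a\in V_0,b\in V_1$ bound a path in $\mathcal{U}'$; specialize $a=x_0\in V_0$ to conclude that every $v\in V_1$ is joined to $x_0$ by a path in $\mathcal{U}'$. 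Such a path fixes $x_0$ as its initial point, hence lies in $\pxo$, hence in $\mathcal{U}$. Taking $V=V_1$ completes this direction.

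\textbf{Anticipated obstacle.} The only place where care is genuinely required is the bookkeeping in the reverse direction: one must verify that restricting the ``instruction set'' of a neighborhood of the concatenation $p_x^{-1}\ast p_y$ to its two halves recovers honest neighborhoods of $p_x^{-1}$ and $p_y$ respectively in $\pxo$ — this is exactly the restricted-neighborhood formalism of Lemma~1.1 together with continuity of concatenation and of path-reversal — and that gluing the resulting paths lands back inside the original neighborhood. None of this is deep, but it is the step where an imprecise argument could go wrong, so I would write it out using the $p_{[t_{i-1},t_i]}$ notation and Lemma~1.1(2) to make the membership $q_a\ast q_b\in\mathcal{U}$ transparent.
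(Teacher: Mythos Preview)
Your proposal is correct and follows essentially the same strategy as the paper: for the nontrivial reverse implication, you concatenate the two hypothesized paths (with one reversed) and verify the result is well-ended by splitting a basic neighborhood at $\tfrac{1}{2}$ and applying the hypothesis to each half, exactly as the paper does with $\alpha^{-1}\ast\beta$ and the restricted neighborhoods $\mathcal{U}_{[0,1/2]}$, $\mathcal{U}_{[1/2,1]}$. One small slip: reversal does \emph{not} carry $P(X,x_0)$ to itself (it carries it to the space of paths ending at $x_0$); what you actually need, and what suffices, is only that reversal is a self-homeomorphism of $P(X)$, so that the reversed half-neighborhood is open in $P(X)$ and hence its intersection with $P(X,x_0)$ is an open neighborhood of $p_x$ there.
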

\begin{proof}
Clearly, if $X$ is wep-connected, the second statement holds. For the converse, pick $a,b\in X$. By assumption there are paths $\alpha,\beta\in P(X,x_0)$ ending at $a,b$ respectively each satisfying the conditions in the statement of the lemma. We claim $\alpha\ast\beta^{-1}$ is well-ended. Let $\mathcal{U}=\bigcap_{j=1}^{n}\langle K_{n}^{j},U_j\rangle$ be a basic open neighborhood of $\alpha\ast\beta^{-1}$ in $\px$. Since $\mathcal{A}=\mathcal{U}_{\left[0,\frac{1}{2}\right]}\cap P(X,x_0)$ and $\mathcal{B}=\left(\mathcal{U}_{\left[\frac{1}{2},1\right]}\right)^{-1}\cap P(X,x_0)$ are open neighborhoods of $\alpha$ and $\beta$ respectively, there are open neighborhoods $A$ of $a$ and $B$ of $b$ such that for any $a'\in A$ (resp. $b'\in B$) there is a path $\alpha '\in \mathcal{A}$ (resp. $\beta '\in \mathcal{B}$) from $x_0$ to $a'$ (resp. $x_0$ to $b'$). Now $\alpha '\ast (\beta ')^{-1}\in \mathcal{U}$ is the desired path from $a'$ to $b'$.
\end{proof}
It is thus convenient to give the following definition.
\begin{definition}
\emph{
A path $p:I\ra X$ is \textit{well-targeted} if for every open neighborhood $\mathcal{U}$ of $p$ in $P(X,p(0))$, there is an open neighborhood $V$ of $p(1)$ such that for each $v\in V$, there is a path $q\in \mathcal{U}$ from $p(0)$ to $v$.
}
\end{definition}
Statement \ref{generalvankamp} is now proven in the case that the intersection $U_1\cap U_2$ is wep-connected.
\begin{vankampenthm} \label{vankampentheorem} 
Let $(X,x_0)$ be a based space and $\{U_1,U_2,U_1\cap U_2\}$ an open cover of $X$ consisting of path connected neighborhoods each containing $x_0$. Let $k_i:U_1\cap U_2\hookrightarrow U_i$ and $l_i:U_i\hookrightarrow X$ be the inclusions. If $U_1\cap U_2$ is wep-connected, the induced diagram of continuous homomorphisms $$\xymatrix{ \pi_{1}^{\tau}(U_1\cap U_2) \ar[r]^-{(k_1)_{\ast}} \ar[d]_-{(k_2)_{\ast}} & \pi_{1}^{\tau}(U_1) \ar[d]^-{(l_1)_{\ast}} \\ \pi_{1}^{\tau}(U_2) \ar[r]_-{(l_2)_{\ast}} & \pitx }$$is a pushout square in the category of topological groups. In other words, there is a canonical isomorphism $$\pitx\cong \pi_{1}^{\tau}(U_1)\ast_{\pi_{1}^{\tau}(U_1\cap U_2)}\pi_{1}^{\tau}(U_2)$$of topological groups.
\end{vankampenthm}
\begin{proof}
We show: If $G$ is a topological group and $f_i:\pi_{1}^{\tau}(U_i)\ra G$ are continuous homomorphisms such that $f_1\circ (k_1)_{\ast}=f_2\circ (k_2)_{\ast}$, there is a unique, continuous homomorphism $\Phi:\pitx\ra G$ such that $\Phi\circ (l_i)_{\ast}=f_i$. The classical van Kampen theorem guarantees the existence and uniqueness of the homomorphism $\Phi$ and so it suffices to show $\Phi$ is continuous. To do this, we show the composition $\phi=\Phi\circ \pi:\ox\ra \pitx\ra G$ is continuous. If this can be done, Proposition \ref{finesttopology} guarantees the continuity of $\Phi:\pitx\ra G$.\\
\indent We first recall a convenient description of $\Phi$: Given any loop $\alpha\in \ox$, find a subdivision $0=t_0<t_1<...<t_n=1$ such that $\alpha_j=\alpha_{[t_{j-1},t_j]}$ has image in $U_{i_j}$ for $i_j\in \{1,2\}$. For $j=1,...,n-1$ find a path $p_{j}:I\to U_j$ from $x_0$ to $a_j=\alpha(t_j)$. If $a_j\in U_1\cap U_2$, we demand that $p_j$ has image in $U_1\cap U_2$. Let $p_0=p_n=c_{x_0}$ be the constant path and $L_j=p_{j-1}\ast \alpha_j\ast p_{j}^{-1}$ for $j=1,...,n$. Note that $L_j$ has image in $U_{i_j}$ and if $\alpha_j$ has image in $U_1\cap U_2$, then so does $L_j$. Additionally, $[\alpha]$ is the product $[L_1]\cdots [L_n]$ in $\pi_1(X)$. Now $\Phi([\alpha])$ is defined to be the product$$f_{i_1}([L_1])f_{i_2}([L_2])\dots f_{i_n}([L_n]).$$ That $\Phi$ is a well-defined homomorphism follows from arguments found in most first course algebraic topology textbooks \cite{Massey91}. It is particularly useful to the current proof that the given description of $\Phi([\alpha])$ does not depend upon the choice of subdivision $0=t_0<t_1<...<t_n=1$ or paths $p_1,...,p_{n-1}$. 

To see that $\phi$ is continuous, suppose $W$ is open in $G$ and $\alpha\in \phi^{-1}(W)$. Write $\phi(\alpha)=\Phi([\alpha]) =f_{i_1}([L_1])f_{i_2}([L_2])\dots f_{i_n}([L_n])$ as above. In particular, choose the subdivision $0=t_0<t_1<...<t_n=1$ so that $i_j\neq i_{j+1}$. This gives $a_j\in U_1\cap U_2$ for each $j$ and thus each $p_j$ has image in $U_1\cap U_2$. Since $p_j$ has image in wep-connected neighborhood $U_1\cap U_2$, we may assume the paths $p_1,...,p_{n-1}$ are well-targeted.

\indent We construct an open neighborhood $\mathcal{U}$ of $\alpha$ contained in $\phi^{-1}(W)$ by combining neighborhoods of its restrictions. Since $G$ is a topological group and $f_{i_1}([L_1])f_{i_2}([L_2])\dots f_{i_n}([L_n])]\in W$, there exists open neighborhoods $W_j$ of $f_{i_j}([L_j])$ in $G$ such that $W_1W_2...W_n\subseteq W$. That both compositions $f_i\circ \pi_{i}:\Omega (U_{i})\ra \pi_{1}^{\tau}(U_i)\ra G$ are continuous means that, for each $j$, there is a basic open neighborhood $\mathcal{V}_j=\bigcap_{m=1}^{M_j}\langle K_{M_j}^{m}, A_{m}^{j}\rangle$ of $L_j$ contained in $\pi_{i_j}^{-1}(f_{i_j}^{-1}(W_j))\subseteq\Omega (U_{i_j})$. We may assume that $M_j$ is divisible by $3$ and that $A_{m}^{j}\subseteq U_1\cap U_2$ whenever $L_j\left(K_{M_j}^{m}\right)\subseteq U_1\cap U_2$. Since each $p_{j}$ has image in $U_1\cap U_2$, this assumption means that $A_{m}^{j}\subseteq U_1\cap U_2$ whenever $K_{M_j}^{m}\subseteq \left[0,\frac{1}{3}\right]\cup \left[\frac{2}{3},1\right]$.\\
\indent Taking restricted neighborhoods, we find that $\left(\mathcal{V}_j\right)_{\left[\frac{2}{3},1\right]}$ is an open neighborhood of $p_{j}^{-1}$ for $j=1,...n-1$, $\left(\mathcal{V}_j\right)_{\left[\frac{1}{3},\frac{2}{3}\right]}$ is an open neighborhood of $\alpha_j$ for $j=1,...,n$, and $\left(\mathcal{V}_j\right)_{\left[0,\frac{1}{3}\right]}$ is an open neighborhood of $p_{j-1}$ for $j=2,...,n$. For $j=1,...,n-1$ both $\left(\left(\mathcal{V}_{j}\right)_{\left[\frac{2}{3},1\right]}\right)^{-1}$ and $\left(\mathcal{V}_{j+1}\right)_{\left[0,\frac{1}{3}\right]}$ are neighborhoods of $p_j$ so we may assume they are equal.

\indent Since $p_{j}:I\ra U_1\cap U_2$ is well-targeted for each $j=1,...,n-1$ with endpoint $p_{j}(1)=a_j$, there is an open neighborhood $B_j$ of $a_j$ in $U_1\cap U_2$ such that for each $b\in B_j$, there is a path $\delta\in \left(\mathcal{V}_{j+1}\right)_{\left[0,\frac{1}{3}\right]}$ from $x_0$ to $b$. Construct the neighborhood $$\mathcal{U}=\bigcap_{j=1}^{n}\left(\left(\mathcal{V}_j\right)_{\left[\frac{1}{3},\frac{2}{3}\right]}\right)^{[t_{j-1},t_j]}\cap \bigcap_{j=1}^{n-1}\langle \{t_{j}\},B_j\rangle$$of $\alpha$ in $\Omega (X,x_0)$. For any loop $\gamma\in \mathcal{U}$, notice that
\begin{itemize}
\item For each $j=1,...,n$, $$\left(\mathcal{V}_j\right)_{\left[\frac{1}{3},\frac{2}{3}\right]}= \left(\left(\left(\mathcal{V}_j\right)_{\left[\frac{1}{3},\frac{2}{3}\right]}\right)^{[t_{j-1},t_j]}\right)_{[t_{j-1},t_j]}$$is an open neighborhood of $\gamma_j=\gamma_{[t_{j-1},t_j]}$ in $\px$.
\item Since $\alpha_j$ has image in $U_{i_j}$, so does $\gamma_{j}$.
\item If $\alpha_j$ has image in $U_1\cap U_2$, then so does $\gamma_{j}$.
\item For $j=1,...,n-1$, since $\gamma(t_{j})\in B_j$, there is a path $$\delta_j\in \left(\mathcal{V}_{j+1}\right)_{\left[0,\frac{1}{3}\right]}=\left(\left(\mathcal{V}_{j}\right)_{\left[\frac{2}{3},1\right]}\right)^{-1}\subseteq \langle I,U_1\cap U_2\rangle$$ from $x_0$ to $\gamma(t_{j})$.
\end{itemize}
Let $\delta_0=\delta_n=c_{x_0}$ and define a loop $\beta$ by demanding that $\beta_{[t_{j-1},t_j]}$ is the loop $\delta_{j-1}\ast \gamma_{[t_{j-1},t_j]} \ast \delta_{j}^{-1}\in \Omega (U_{i_j},x_0)$ for $j=1,...,n$. Note that $\beta_{[t_{j-1},t_j]}$ has image in $U_{i_j}$ and if $\alpha_j$ has image in $U_1\cap U_2$, then so does $\beta_{[t_{j-1},t_j]}$. Thus $$\beta\simeq \left(\delta_{0}\ast\gamma_{[t_{0},t_1]} \ast \delta_{1}^{-1}\right)\ast\dots \ast \left(\delta_{j-1}\ast \gamma_{[t_{j-1},t_j]} \ast \delta_{j}^{-1}\right)\ast\left( \delta_{j}\ast \gamma_{[t_{j},t_{j+1}]} \ast \delta_{j+1}^{-1} \right)\ast\dots \ast\left(\delta_{n-1}\ast \gamma_{[t_{n-1},t_n]}\ast \delta_{n}\right)  \simeq \gamma$$and $\Phi([\beta])$ is well-defined as the product \[f_{i_1}([\beta_{[t_{0},t_1]}])f_{i_2}([\beta_{[t_{1},t_2]}])\dots  f_{i_n}([\beta_{[t_{n-1},t_n]}])\] in $G$. Moreover, for $j=1,...,n$, we have $\left(\beta_{[t_{j-1},t_j]}\right)_{C}\in (\mathcal{V}_j)_{C}$ for $C=\left[0,\frac{1}{3}\right],\left[\frac{1}{3},\frac{2}{3}\right],\left[\frac{2}{3},1\right]$. Therefore $$\beta_{[t_{j-1},t_j]}\in \bigcap_{C}\left((\mathcal{V}_j)_{C}\right)^{C}=\mathcal{V}_j\subseteq \pi_{i_j}^{-1}(f_{i_j}^{-1}(W_j))\subseteq \Omega (U_{i_j},x_0)$$All together, we see that $$\phi(\gamma)=\Phi([\gamma])=\Phi([\beta])=f_{i_1}([\beta_{[t_{0},t_1]}])f_{i_2}([\beta_{[t_{1},t_2]}])\dots  f_{i_n}([\beta_{[t_{n-1},t_n]}])\in W_1W_2\dots W_n\subseteq W.$$This completes the proof of the inclusion $\mathcal{U}\subseteq \phi^{-1}(W)$. Therefore $\phi$ is continuous.
\end{proof}
\begin{remark} \emph{
While the condition that $U_1\cap U_2$ be wep-connected is sufficient for the van Kampen theorem to hold, it is certainly not a necessary condition. For any path connected, non-wep-connected space $X$, the unreduced suspension $SX$ is quotient of $X\times I$ by collapsing both $X\times \{0\}$ and $X\times \{1\}$ to points. Let $U_1$ and $U_2$ be the image of $X\times \left[0,\frac{2}{3}\right)$ and $X\times \left(\frac{1}{3},1\right]$ in the quotient respectively. The open sets $U_1,U_2$ are contractible and the van Kampen theorem holds trivially even though $U_1\cap U_2$ is not wep-connected.}
\end{remark}
\begin{example}  \emph{
We can now compute $\pity$ from Example \ref{vankampex} by choosing an appropriate cover. For each $x\in X=\omega+1$, let $0_x$ denote $0$ in $e^{1}_{x}=[-1,1]$. Since $U_3=Y-\bigcup_{x\in X}\{0_x\}$ is homotopy equivalent to $\sus$, we have $\pi_{1}^{\tau}(U_3)\cong F_{M}(\omega+1)$. The union $U_1\cup U_3$ is $X$ and $U_1\cap U_3$ is wep-connected and 1-connected. Therefore, the van Kampen theorem applies and gives $$\pity\cong  \pi_{1}^{\tau}(U_1)\ast\pi_{1}^{\tau}(U_3)\cong  F_{M}(\omega)\ast F_{M}(\omega+1)\cong F_{M}( \omega\sqcup (\omega+1))\cong F_{M}(\omega+\omega)$$ where $\omega+\omega$ is the ordinal sum.}
\end{example}
\begin{example}
\emph{
The canonical isomorphism $F_{M}(A_1\sqcup A_2)\cong F_{M}(A_1)\ast F_{M}(A_2)$ of topological groups is quickly recovered from the van Kampen theorem. Choose a space $Y_i$ such that $\pi_{0}^{qtop}(Y_i)\cong A_i$. Let $X=\Sigma((Y_1\sqcup Y_2)_+)\cong \Sigma((Y_1)_+)\vee \Sigma((Y_2)_+)$, $V_i$ be the wep-connected, contractible neighborhood $Y_i\wedge \left[0,\frac{1}{3}\right)\sqcup \left(\frac{2}{3},1\right]\subset  \Sigma((Y_i)_+)$, $U_1=\Sigma((Y_1)_+)\vee V_{2}$, and $U_2=V_1 \vee\Sigma((Y_2)_+)$. Since $U_1\cap U_2=V_1 \vee V_2$ is wep-connected and contractible the van Kampen theorem gives the middle isomorphism in: $$F_{M}(A_1\sqcup A_2)\cong \pi_{1}^{\tau}(\Sigma((Y_1\sqcup Y_2)_+))\cong \pi_{1}^{\tau}(\Sigma((Y_1)_+))\ast \pi_{1}^{\tau}(\Sigma((Y_2)_+))\cong F_{M}(A_1)\ast F_{M}(A_2).$$The first and third isomorphisms come from Theorem \ref{theoremfreetopologicalgroups} and the fact that $\pi_{0}^{qtop}(Y_1 \sqcup Y_2)=A_1\sqcup A_2$. This observation illustrates why the notion of wep-connected intersection is an appropriate generalization of local path connectedness. Indeed, if we only consider the case where $U_1\cap U_2$ is locally path connected, this application of the van Kampen theorem is restricted to discrete groups.
}
\end{example}
\begin{corollary}
Given the hypothesis of Theorem \ref{vankampentheorem}, the homomorphism $F_{M}(\Omega(U_1))\ast F_{M}(\Omega(U_2)) \ra \pi_{1}^{\tau}(X)$ induced by the canonical maps $\Omega(U_i)\ra \pi_{1}^{\tau}(X)$, $i=1,2$ is a topological quotient map.
\end{corollary}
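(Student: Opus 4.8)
The plan is to exhibit the homomorphism $F_{M}(\Omega(U_1))\ast F_{M}(\Omega(U_2))\ra \pit(X)$ as a composite of topological quotient maps. First I would invoke the canonical isomorphism $F_{M}(\Omega(U_1))\ast F_{M}(\Omega(U_2))\cong F_{M}(\Omega(U_1)\sqcup \Omega(U_2))$ of topological groups (the same identification used in the proof of Proposition \ref{coproductsandtau}), so that, up to this isomorphism, the map in question is the one induced by the single map $\Omega(U_1)\sqcup \Omega(U_2)\ra \pit(X)$ whose restriction to $\Omega(U_i)$ is the canonical map.

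Next I would chain together four quotient maps. The quotient maps $\pi_i\colon \Omega(U_i)\ra \pi_{1}^{qtop}(U_i)$ assemble into a quotient map $\pi_1\sqcup \pi_2\colon \Omega(U_1)\sqcup \Omega(U_2)\ra \pi_{1}^{qtop}(U_1)\sqcup \pi_{1}^{qtop}(U_2)$, and by part 3 of Lemma \ref{freetopgrpfacts} the induced homomorphism $F_{M}(\pi_1\sqcup \pi_2)$ is again quotient. Then Proposition \ref{coproductsandtau}, applied with $G_i=\pi_{1}^{qtop}(U_i)$ (so that $\tau(G_i)=\pit(U_i)$), gives that $k\colon F_{M}(\pi_{1}^{qtop}(U_1)\sqcup \pi_{1}^{qtop}(U_2))\ra \pit(U_1)\ast \pit(U_2)$ is a topological quotient map. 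The projection $\pit(U_1)\ast \pit(U_2)\ra \pit(U_1)\ast_{\pit(U_1\cap U_2)}\pit(U_2)$ is quotient by the very definition of the amalgamated free topological product, and Theorem \ref{vankampentheorem} identifies its target with $\pit(X)$ via the canonical isomorphism. Since a composite of topological quotient maps is a topological quotient map, the composite $F_{M}(\Omega(U_1))\ast F_{M}(\Omega(U_2))\ra \pit(X)$ is quotient.

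It remains to check that this composite is the homomorphism described in the statement; as both are continuous homomorphisms out of a free topological group, it suffices to compare them on the free generators, i.e.\ on loops $\alpha\in \Omega(U_i)$. Tracing such an $\alpha$ through the four maps sends it successively to $[\alpha]\in \pi_{1}^{qtop}(U_i)$, to the corresponding generator in $\pit(U_1)\ast \pit(U_2)$, to $(l_i)_{\ast}[\alpha]=[\alpha]\in \pit(X)$, which is exactly the value on $\alpha$ of the canonical map $\Omega(U_i)\ra \pit(X)$. I do not expect a genuine obstacle here; the only care required is bookkeeping — keeping the four quotient maps and the identification $F_{M}(A)\ast F_{M}(B)\cong F_{M}(A\sqcup B)$ straight — together with this routine generator-level verification.
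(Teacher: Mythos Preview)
Your proposal is correct and follows essentially the same route as the paper: reduce to $F_{M}(\Omega(U_1)\sqcup\Omega(U_2))\ra\pit(X)$ via the isomorphism $F_{M}(A)\ast F_{M}(B)\cong F_{M}(A\sqcup B)$, then factor as $h\circ k'\circ k\circ F_{M}(\pi_1\sqcup\pi_2)$ using Lemma \ref{freetopgrpfacts}(3), Proposition \ref{coproductsandtau}, the quotient projection to the amalgam, and the van Kampen isomorphism. Your added generator-level check that the composite agrees with the stated map is a nice bit of care the paper leaves implicit.
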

\begin{proof}
Since $F_{M}(\Omega(U_1))\ast F_{M}(\Omega(U_2))\cong F_{M}(\Omega(U_1)\sqcup\Omega(U_2))$ it suffices to show that $Q:F_{M}(\Omega(U_1)\sqcup\Omega(U_2))\ra \pitx$, $Q(\alpha_1...\alpha_n)=[\alpha_1\ast\dots\ast \alpha_{n}]$ is quotient. Let $\pi_i:\Omega(U_i)\ra \pi_{1}^{qtop}(U_i)$ be the quotient map identifying path components. Since $F_M$ preserves quotients, $F_{M}(\pi_1\sqcup \pi_2)$ is quotient. The map $$k:F_{M}(\pi_{1}^{qtop}(U_1)\sqcup \pi_{1}^{qtop}(U_2))\ra \pi_{1}^{\tau}(U_1)\ast \pi_{1}^{\tau}(U_2)$$ of Proposition \ref{coproductsandtau} is also quotient. Additionally, the projection $k':\pi_{1}^{\tau}(U_1)\ast \pi_{1}^{\tau}(U_2)\ra \pi_{1}^{\tau}(U_1)\ast_{\pi_{1}^{\tau}(U_1\cap U_2)} \pi_{1}^{\tau}(U_2)$ is quotient. Let $h:\pi_{1}^{\tau}(U_1)\ast_{\pi_{1}^{\tau}(U_1\cap U_2)} \pi_{1}^{\tau}(U_2)\cong \pitx$ be the isomorphism of Theorem \ref{vankampentheorem}. The composite $Q=h\circ k' \circ k\circ F_{M}(\pi_1\sqcup \pi_2)$ is quotient since it is the composition of quotient maps.
\end{proof}
It is not true that the wedge of two wep-connected spaces is wep connected. For instance, let \[CX=\frac{I\times X}{\{0\}\times X}\] be the cone on $X=\{1,2,...,\infty\}$ (the one-point compactification of the natural numbers) and have basepoint the image of $(1,\infty)$ in the quotient. Even though $CX$ is wep-connected, $CX\vee CX$ is not wep-connected. Thus to apply the van Kampen theorem to a wedge of two spaces, we require the following lemma.
\begin{lemma} \label{wedgeoflwtspaces}
If $(X_{\lambda},x_{\lambda})$ is a family of based, wep-connected spaces which are locally path connected at their basepoints and such that $\{x_{\lambda}\}$ is closed in $X_{\lambda}$ for each $\lambda$, then $X=\bigvee_{\lambda}X_{\lambda}$ is wep-connected.
\end{lemma}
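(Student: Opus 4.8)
The plan is to reduce the problem, via Lemma \ref{lemmalwelwt}, to producing one good path out of the wedge point and then to exploit wep-connectedness of the individual summands. Write $x_0$ for the wedge point of $X=\bigvee_{\lambda}X_{\lambda}$, i.e.\ the common image of all the $x_{\lambda}$. By Lemma \ref{lemmalwelwt} (with basepoint $x_0$), it suffices to show that for every $x\in X$ there is a well-targeted path in $X$ from $x_0$ to $x$. I would dispose of the point $x_0$ itself first: because each $X_{\lambda}$ is locally path connected at $x_{\lambda}$, a routine check with the wedge topology shows $X$ is locally path connected at $x_0$ --- any neighborhood of $x_0$ restricts in each $X_{\lambda}$ to a neighborhood of $x_{\lambda}$ containing a path-connected open set, and the wedge of these is a path-connected open neighborhood of $x_0$ inside the original. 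Proposition \ref{lpcimplieslwepc} then shows the constant path $c_{x_0}$ is well-ended, hence well-targeted.

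Next I would fix $x\neq x_0$, so that $x$ lies in a unique summand $X_{\mu}$ and $x\neq x_{\mu}$. Since $X_{\mu}$ is wep-connected, Lemma \ref{lemmalwelwt} applied to $(X_{\mu},x_{\mu})$ yields a well-targeted path $p\colon I\to X_{\mu}$ from $x_{\mu}$ to $x$; let $\iota\colon X_{\mu}\hookrightarrow X$ be the inclusion, and consider $\iota\circ p\colon I\to X$, a path from $x_0$ to $x$. To verify that $\iota\circ p$ is well-targeted, take a basic neighborhood $\mathcal{U}=\bigcap_{j=1}^{n}\langle K_{n}^{j},U_{j}\rangle\cap P(X,x_0)$ of $\iota\circ p$. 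Since $p$ has image in $X_{\mu}$, the corresponding neighborhood of $p$ in $P(X_{\mu},x_{\mu})$ is the basic set $\mathcal{U}'=\bigcap_{j=1}^{n}\langle K_{n}^{j},U_{j}\cap X_{\mu}\rangle\cap P(X_{\mu},x_{\mu})$. Well-targetedness of $p$ in $X_{\mu}$ supplies an open set $V\subseteq X_{\mu}$ with $x\in V$ such that every $v\in V$ is joined to $x_{\mu}$ by some $q\in\mathcal{U}'$; then $\iota\circ q$ joins $x_0$ to $v$ and lies in $\mathcal{U}$.

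The only genuine obstacle is that $V$ is a priori open only in $X_{\mu}$, while well-targetedness of $\iota\circ p$ demands a neighborhood of $x$ open in all of $X$; this is exactly where the hypothesis that $\{x_{\mu}\}$ is closed in $X_{\mu}$ enters. I would replace $V$ by $V_0=V\cap(X_{\mu}\setminus\{x_{\mu}\})$, still an open neighborhood of $x$ in $X_{\mu}$ because $x\neq x_{\mu}$. Viewed inside $X$, the set $V_0$ misses $x_0$, hence misses every summand other than $X_{\mu}$ (distinct summands meet only at $x_0$); so its preimage in $\bigsqcup_{\lambda}X_{\lambda}$ is $V_0$ in the $X_{\mu}$-summand and empty in every other summand, and is therefore open, so $V_0$ is open in $X$. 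Re-running the construction of the previous paragraph with $V_0$ in place of $V$ then shows $\iota\circ p$ is well-targeted, and Lemma \ref{lemmalwelwt} gives that $X$ is wep-connected. Aside from this bookkeeping, the remaining manipulations of restricted neighborhoods are of the same elementary flavor as in Proposition \ref{lpcimplieslwepc} and Lemma \ref{lemmalwelwt}.
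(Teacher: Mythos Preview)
Your argument is correct and follows essentially the same route as the paper's own proof: reduce via Lemma~\ref{lemmalwelwt} to producing well-targeted paths from $x_0$, handle $x_0$ using local path connectedness there, and for $x\in X_{\mu}\setminus\{x_{\mu}\}$ push a well-targeted path in $X_{\mu}$ forward along the inclusion, using closedness of $\{x_{\mu}\}$ to shrink the target neighborhood off the wedge point so that it is open in $X$. Your write-up is in fact a bit more explicit than the paper's about why the resulting neighborhood is open in the wedge.
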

\begin{proof}
Let $x_0$ be the canonical basepoint of $X$. Since $X$ is locally path connected at $x_0$, the constant path $c_{x_0}:I\ra X$ is well-targeted. Let $z\in X_{\lambda}-\{x_{\lambda}\}$. It suffices to find a well-targeted path in $X$ from $x_0$ to $z$. Since $X_{\lambda}$ is wep-connected, there is a well-targeted path $\gamma:I\ra X_{\lambda}$ from $x_{\lambda}$ to $z$. Let $\mathcal{U}=\bigcap_{j=1}^{n}\langle K_{n}^{j},U_j\rangle$ be an open neighborhood of $\gamma:I\ra X_{\lambda}\hookrightarrow X$ in $P(X,x_0)$. Then $\mathcal{V}=\bigcap_{j=1}^{n}\langle K_{n}^{j},U_j\cap X_{\lambda}\rangle$ is an open neighborhood of $\gamma$ in $P(X_{\lambda},x_0)$. 

Since $\gamma$ is well-targeted, there is an open neighborhood $V$ of $z$ in $X_{\lambda}$ such that $x_{\lambda}\notin V$ and such that for each $v\in V$, there is a $\delta\in \mathcal{V}$ from $x_{\lambda}$ to $z$. Note that $V$ is an open neighborhood of $z$ in $X$ and for each $v\in V$ there is a path $\delta:I\ra X_{\lambda}\hookrightarrow X$ in $\mathcal{U}$ from $x_0$ to $v$. Thus $\delta:I\ra X_{\lambda}\hookrightarrow X$ is well-targeted.
\end{proof}
\begin{theorem}
Let $(X,x_0),(Y,y_0)$ be path connected spaces having a countable neighborhood base of 1-connected neighborhoods at their basepoints. If there are wep-connected, simply connected neighborhoods $A$ of $x_0$ in $X$ and $B$ of $y_0$ in $Y$, then there is a canonical isomorphism $\pi_{1}^{\tau}(X\vee Y)\cong \pitx\ast\pity$ of topological groups.
\end{theorem}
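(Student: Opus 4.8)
The plan is to deduce the result from a single application of the topological van Kampen theorem (Theorem \ref{vankampentheorem}) together with Lemma \ref{wedgeoflwtspaces}. First I would arrange for $A$ and $B$ to be open neighborhoods (shrinking if necessary, or by the convention that basepoint neighborhoods may be taken open), and extract from the countable $1$-connected neighborhood bases at $x_0$ and $y_0$ decreasing bases $A = A_0 \supseteq A_1 \supseteq A_2 \supseteq \cdots$ and $B = B_0 \supseteq B_1 \supseteq \cdots$ of $1$-connected open neighborhoods by a routine diagonal extraction. Set $U_1 = X \vee B$ and $U_2 = A \vee Y$; these are open, path connected neighborhoods of the wedge point $x_0$ in $X \vee Y$, they cover $X \vee Y$, and $U_1 \cap U_2 = A \vee B$.

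The only nontrivial hypothesis of Theorem \ref{vankampentheorem} to check is then that $A \vee B$ is wep-connected. Since the $1$-connected neighborhoods forming the base at $x_0$ are path connected open sets, $X$, and hence its neighborhood $A$, is locally path connected at $x_0$; likewise $B$ is locally path connected at $y_0$. Granting that $\{x_0\}$ is closed in $A$ and $\{y_0\}$ closed in $B$ (automatic if the ambient spaces are $T_1$), Lemma \ref{wedgeoflwtspaces} applied to the two-element family $\{A,B\}$ shows $A \vee B$ is wep-connected. Theorem \ref{vankampentheorem} now produces a canonical isomorphism $\pi_{1}^{\tau}(X \vee Y) \cong \pi_{1}^{\tau}(X \vee B) \ast_{\pi_{1}^{\tau}(A \vee B)} \pi_{1}^{\tau}(A \vee Y)$ of topological groups.

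The remaining work is to identify the three groups appearing here, via the following key lemma: if $C$ is based at $c_0$, is simply connected, and has a countable neighborhood base of $1$-connected open neighborhoods at $c_0$, then for every based space $(Z,z_0)$ each loop in $Z \vee C$ based at $z_0$ is homotopic rel basepoint to a loop in $Z$. The retraction $r \colon Z \vee C \to Z$ collapsing $C$ is continuous and splits the map induced by the inclusion $Z \hookrightarrow Z \vee C$, so the lemma upgrades this inclusion to a topological isomorphism $\pi_{1}^{\tau}(Z) \cong \pi_{1}^{\tau}(Z \vee C)$. Applying it to $(Z,C) = (X,B)$, $(Y,A)$, and $(A,B)$ gives $\pi_{1}^{\tau}(X \vee B) \cong \pi_{1}^{\tau}(X)$, $\pi_{1}^{\tau}(A \vee Y) \cong \pi_{1}^{\tau}(Y)$, and $\pi_{1}^{\tau}(A \vee B) \cong \pi_{1}^{\tau}(A) = 1$ (the latter since $A$ is simply connected, so its fundamental group is trivial and carries only the trivial topology). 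As a pushout over the trivial group is the free topological product, these identifications are compatible with the inclusion-induced structure maps and yield the canonical isomorphism $\pi_{1}^{\tau}(X \vee Y) \cong \pi_{1}^{\tau}(X) \ast \pi_{1}^{\tau}(Y)$.

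The main obstacle is the key lemma, i.e.\ contracting the $C$-portions of a loop in $Z \vee C$ into $z_0$ all at once by a single continuous homotopy. The argument: write $[0,1] \setminus \gamma^{-1}(z_0)$ as a countable disjoint union of open intervals, on each of which $\gamma$ is a loop lying entirely in $Z$ or entirely in $C$; since disjoint subintervals of $[0,1]$ have lengths tending to $0$, continuity of $\gamma$ forces that for each $i$ only finitely many of the $C$-intervals have image not contained in $C_i$. Null-homotope each $C$-interval inside the smallest $C_i$ containing its image and keep the homotopy constant on $\gamma^{-1}(z_0)$; continuity of the resulting homotopy at accumulation points of the interval family is exactly where one uses that the $C_i$ form a neighborhood base of $c_0$, so that near such points the homotopy stays arbitrarily close to $z_0$. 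Everything else is bookkeeping. One should also be careful about the reduction to open $A, B$ and the $T_1$-type point needed for Lemma \ref{wedgeoflwtspaces}, which may warrant stating the theorem with $A, B$ open (or the ambient spaces Hausdorff).
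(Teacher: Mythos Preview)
Your proposal is correct and follows the same route as the paper: the same cover $U_1=X\vee B$, $U_2=A\vee Y$, the same appeal to Lemma \ref{wedgeoflwtspaces} for wep-connectedness of $A\vee B$, the van Kampen theorem, and the retraction argument to upgrade the group isomorphisms to topological ones. The only difference is that where you state and sketch your ``key lemma'' (that loops in $Z\vee C$ retract into $Z$ when $C$ is simply connected with a countable $1$-connected neighborhood base at its basepoint), the paper simply cites Griffiths' theorem \cite{Gr54} as a black box for the corresponding isomorphisms $\pi_1(X)\cong\pi_1(X\vee B)$, $\pi_1(Y)\cong\pi_1(A\vee Y)$, and $\pi_1(A\vee B)=1$; your sketch is essentially the standard proof of that special case. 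The technical caveats you raise (openness of $A,B$ and closedness of the basepoints for Lemma \ref{wedgeoflwtspaces}) are glossed over in the paper as well, which tacitly treats $A,B$ as open.
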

\begin{proof}
We first recall a theorem of Griffiths \cite{Gr54}: If $W_1,W_2$ are based spaces, one of which has a countable base of 1-connected neighborhoods at its basepoint, then the inclusions $W_i\hookrightarrow W_1\vee W_2$ induce an isomorphism $\pi_{1}(W_1)\ast\pi_{1}(W_2)\ra \pi_{1}(W_1\vee W_2)$ of groups. Since $A$, $B$, and $A\vee B$ are open in $X$, $Y$, and $X\vee Y$ respectively, each has a countable neighborhood base of path connected, 1-connected neighborhoods. Griffiths' theorem implies that $\pi_{1}(A \vee B)=1$. Let $U_1=X\vee B$ and $U_2=A\vee Y$ so that $U_1\cap U_2=A\vee B$. Since $A$ and $B$ are wep-connected and locally path connected at their basepoints, $A\vee B$ is wep-connected by Lemma \ref{wedgeoflwtspaces}. The van Kampen theorem applies and gives an isomorphism $\pi_{1}^{\tau}(X\vee Y)\cong \pi_{1}^{\tau}(X\vee B)\ast \pi_{1}^{\tau}(A \vee Y)$ of topological groups. Again using Griffiths' theorem, the inclusions $X\hookrightarrow X\vee B$ and $Y\hookrightarrow A\vee Y$ induce continuous group isomorphisms $\pi_{1}^{\tau}(X)\ra \pi_{1}^{\tau}(X\vee B)$ and $\pi_{1}^{\tau}(Y)\ra \pi_{1}^{\tau}(A\vee Y)$. These group isomorphisms are also homeomorphisms since their inverses are induced by the retractions $X\vee B\ra X$ and $A\vee Y\ra Y$. All together, there are canonical isomorphisms $$\pi_{1}^{\tau}(X\vee Y)\cong \pi_{1}^{\tau}(X\vee B)\ast \pi_{1}^{\tau}(A \vee Y)\cong \pitx\ast\pity.$$
\end{proof}
\section{Conclusions}
Covering space theory, in the classical sense, provides general techniques for studying subgroups of given groups via topology. The three main results in Section 4 indicate that general topological groups, even those with complicated topological structure, are quite naturally realized as fundamental groups of simple wep-connected spaces (which are locally path connected at their basepoints). Thus it is plausible that there are techniques allowing one to study the structure of topological groups by studying the topology of spaces having non-trivial local properties.

The author has proposed a generalization of covering space theory \cite{Brsemi} as one such technique to study open subgroup(oid)s of topologically enriched group(oid)s. This theory (of semicoverings) also indicates that the topology of $\pitx$ typically retains a good deal more information than the category of covering spaces of $X$. It is a great convenience that, unlike covering space theory, semicovering theory applies to arbitrary locally path connected spaces, the generalized wedges of circles of Section 4.1, and the CW-like spaces constructed in Section 4.2. It would be interesting if other properties such as separation and zero dimensionality of topological groups and their connection to shape injectivity could be studied using similar techniques.
\section{Acknowledgements}
The author thanks his advisor Maria Basterra for her guidance and encouragement and Paul Fabel for helpful conversations. Thanks are also due to the University of New Hampshire graduate school for funding in the form of a Dissertation Year Fellowship.
\end{document}